\newcommand{\al}{\alpha}
\newcommand{\be}{\beta}
\newcommand{\ga}{\gamma}
\newcommand{\la}{\lambda}
\newcommand{\de}{\delta}
\newcommand{\eps}{\varepsilon}
\newcommand{\bx}{\bar x}
\newcommand{\by}{\bar y}
\newcommand{\iv}{^{-1} }
\newcommand {\R} {\mathbb R}
\newcommand {\N} {\mathbb N}
\newcommand {\B} {\mathbb B}
\newcommand {\gph} {{\rm gph}\,}
\newcommand {\dom} {{\rm dom}\,}
\newcommand {\epi} {{\rm epi}\,}
\newcommand {\bd} {{\rm bd}\,}
\newcommand {\Int} {{\rm int}\,}
\renewcommand{\iff}{$ \Leftrightarrow\ $}
\newcommand{\folgt}{$ \Rightarrow\ $}
\def\nbh{neighbourhood}
\def\es{\emptyset}
\def\lsc{lower semicontinuous}
\def\RHS{right-hand side}
\def\SVM{set-valued mapping}
\def\EVP{Ekeland variational principle}
\newcommand{\norm}[1]{\left\Vert#1\right\Vert}
\newcommand{\red}[1]{\textcolor{red}{#1}}
\newcommand{\qdtx}[1]{\quad\mbox{#1}\quad}
\newcommand{\AND}{\quad\mbox{and}\quad}
\newcounter{mycount}
\newcommand{\AK}[1]{\todo[inline]{AK {#1}}}
\begin{document}
\title{Transversality Properties: Primal Sufficient Conditions
\thanks{The research was supported by the Australian Research Council, project DP160100854.
The second author benefited from the support of the FMJH Program PGMO and from the support of EDF.}
}
\author{Nguyen Duy Cuong \and
Alexander Y. Kruger}
\institute{
Nguyen Duy Cuong 	
\\
Centre for Informatics and Applied Optimization, School of Science, Engineering and Information Technology, Federation University Australia, POB 663, Ballarat, Vic, 3350, Australia\\
Department of Mathematics, College of Natural Sciences, Can Tho University, Can Tho City, Vietnam\\
Email: 
duynguyen@students.federation.edu.au,
{ndcuong@ctu.edu.vn}\smallskip\\
Alexander Y. Kruger (\Letter)
\\
Centre for Informatics and Applied Optimization, School of Science, Engineering and Information Technology, Federation University Australia, POB 663, Ballarat, Vic, 3350, Australia\\
Email: a.kruger@federation.edu.au, 	
}
\date{Received: date / Accepted: date}
\maketitle

\begin{abstract}
The paper studies `good arrangements' (transversality properties)
of collections of sets
in a normed vector space
near a given point in their intersection.
We
target
primal (metric and slope) characterizations of transversality properties
in the nonlinear setting.
The H\"older case is given a special attention.
Our main objective is not formally extending our earlier results from the H\"older to a more general nonlinear setting, but rather to develop a general framework for quantitative analysis of transversality properties.
The nonlinearity is just a simple setting, which allows us to unify the existing results on the topic.
Unlike the well-studied subtransversality property, not many characterizations of the other two important properties: se\-mitransversality and transversality have been known even in the linear case.
Quantitative relations between nonlinear transversality properties and the corresponding  regularity properties of \SVM s as well as
nonlinear extensions of the new transversality properties of a \SVM\ to a set in the range space due to Ioffe are also discussed.
\end{abstract}

\keywords{Transversality \and Subtransversality \and Semitransver\-sality \and {Regularity} \and Subregularity \and Semiregularity \and Slope \and Chain Rule}

\subclass{Primary 49J52 \and 49J53 \and Secondary 49K40 \and 90C30 \and 90C46}

\setcounter{tocdepth}{2}
\section{Introduction}\label{S1}

\if{
{\bf Dedication.}
The paper is dedicated to Professor Marco
A. L{\'o}pez, on the occasion of his 70th
birthday.
\medskip
}\fi

This paper continues a series of publications by the authors \cite{BuiCuoKru,CuoKru4,CuoKru5,CuoKru6,ThaBuiCuoVer20,
Kru09,Kru06,Kru18, KruTha13,Kru05,KruTha14,KruTha15,KruTha16, KruLukTha17,KruLukTha18,KruLop12.1,KruLop12.2} dedicated to studying `good arrangements' of collections of sets in normed spaces near a point in their intersection.
Following Ioffe \cite{Iof17}, such arrangements are now commonly referred to as \emph{transversality} properties.
Here we refer to transversality broadly as a group of `good arrangement' properties, which includes \emph{semitransversality}, \emph{subtransversality}, \emph{transversality} (a specific property) and some others.
The term \emph{regularity} was extensively used for the same purpose in the earlier publications by the second author, and is still preferred by many authors.

Transversality (regularity) properties of collections of sets play
an important role in optimization and variational analysis, e.g., as constraint qualifications, qualification conditions in subdifferential, normal cone and coderivative calculus, and convergence analysis of computational algorithms.
Significant efforts have been invested into studying this class of properties and establishing their primal and dual necessary and/or sufficient characterizations in various settings (convex and nonconvex, finite and infinite dimensional, finite and infinite collections of sets).
In addition to the references provided above, we refer the readers to \cite{BauBor96,Iof00,LewLukMal09,NgaThe01, NgZan07,ZheNg08,BorLiTam17,BorLiYao14,DruLiWol17,ZheWeiYao10, HesLuk13,DruIofLew15,NolRon16, BakDeuLi05,BauBorLi99,Pen13} {for results and historical comments}.

Our aim is to develop a general framework for quantitative analysis of transversality properties of collections of sets.
In this paper, we focus on primal space conditions, and
establish metric characterizations and slope-type sufficient conditions for three closely related general nonlinear transversality properties: \emph{$\varphi-$se\-mitransversali\-ty}, \emph{$\varphi-$subtransversality} and \emph{$\varphi-$transversality}.
\sloppy

The slope sufficient conditions stem from applying the \EVP\ to the definitions of the respective properties; the proofs are rather straightforward.
This type of conditions are often considered as just a first step on the way to producing more involved dual ({subdifferential and} normal cone) conditions, and the primal sufficient conditions remain hidden in the proofs.
We believe that primal conditions
(being in a sense analogues of very popular slope conditions for error bounds) can be of importance for
applications.
Moreover, subdividing the conventional regularity/transversality theory into the primal and dual parts clarifies the roles of the main tools employed within the theory: the \EVP\ in the primal part and subdifferential sum rules in the dual part.
As a result, the proofs in both the primal and the `more involved' dual parts become straightforward.
This observation goes beyond the transversality of collections of sets and applies also to the regularity of \SVM s and the {error bound} theory.

Unlike the earlier publications, here, besides estimates for the transversality moduli, we provide also quantitative estimates for the parameters $\de$'s involved in the definitions; cf. Definitions~\ref{D0} and \ref{D1}.
This can be of importance from the computational point of view.
We also examine quantitative relations between the nonlinear transversality properties of collections of sets and the corresponding nonlinear regularity properties of \SVM s as well as nonlinear extensions of the new \emph{transversality properties of a \SVM\ to a set in the range space} due to Ioffe.

We would like to emphasize that our main objective is not formally extending our earlier results from the H\"older to a more general nonlinear setting, but rather to develop a comprehensive theory of transversality.
The nonlinearity is just a simple setting, which allows us to unify the existing (and hopefully also future) results on the topic.
In fact, unlike the subtransversality property which has been well studied in the linear and H\"older settings (see, for instance, \cite{BauBorLi99,Iof00,BakDeuLi05, NgZan07,ZheNg08,HesLuk13, KruTha14,DruIofLew15,KruLukTha17}), for the other two properties: se\-mitransversality and transversality not many characterizations have been known even in the linear case; we fill this gap in the current paper.

Besides the conventional H\"older case, which is given a special attention in the paper, our general model covers also so called \emph{H\"older-type} settings \cite{BolNguPeySut17,Li13} that have recently come into play in the closely related error bound theory due to their importance for applications.
Such nonlinear settings of transversality properties have not been studied before.
Some characterizations are new even in the linear setting.

Apart from being of interest on their own, the slope sufficient conditions for nonlinear transversality properties established in this paper lay the foundation for the dual sufficient conditions for the respective properties in Banach and Asplund spaces in 
\cite{CuoKru5}.
Primal and dual necessary conditions for the nonlinear transversality properties are
studied
in \cite{CuoKru4,CuoKru6}.

There exist strong connections between transversality properties of collections of sets and the corresponding regularity properties of \SVM s.
In this paper, we establish quantitative relations between the two models in the general nonlinear setting.
Nonlinear regularity properties of \SVM s
and closely related error bound properties of (extended-)real-valued functions
have been intensively studied since 1980s; cf. \cite{FraQui12,GayGeoJea11,Kum09,LiMor12, BorZhu88,Fra87,Iof13,Kru16,Kru16.2,KruTha14,OuyZhaZhu19, ZheZhu16, AzeCor17,AzeCor14,CorMot08,YaoZhe16}.
The slope sufficient conditions for $\varphi-$sub\-trans\-versality in Section~\ref{S4} can be interpreted in terms of the corresponding conditions for nonlinear error bounds.
The semitransversality and transversality properties do not have exact counterparts within the conventional error bound theory.

As in most of our previous publications on the topic, our working model in this paper is a collection of $n\ge2$ arbitrary subsets $\Omega_1,\ldots,\Omega_n$ of a normed vector space $X$, having a common point $\bx\in\cap_{i=1}^n\Omega_i$.
The next definition introduces three most common Hölder transversality properties.
It is a modification of \cite[Definition~1]{KruTha14}.

\begin{definition}\label{D0}
Let
$\alpha>0$ and $q>0$.
{The collection} $\{\Omega_1,\ldots,\Omega_n\}$ is
\begin{enumerate}
\item
$\al-$semitransversal of order $q$ at $\bx$ if there exists a $\delta>0$ such that
\begin{align}\label{D0-1}
\bigcap_{i=1}^{n}(\Omega_i-x_i)\cap B_\rho(\bar{x})\ne\emptyset
\end{align}
for all $\rho\in]0,\delta[$ and
$x_i\in{X}$ {$(i=1,\ldots,n)$ with $\max_{1\le i\le n}\|x_i\|^q<\al\rho$};

\item
$\al-$subtransversal of order $q$ at $\bx$ if there exist $\delta_1>0$ and $\delta_2>0$ such that
\begin{align}\label{D0-2}
\bigcap_{i=1}^n\Omega_i\cap B_\rho(x)\ne\es
\end{align}
for all $\rho\in]0,\de_1[$ and $x\in B_{\de_2}(\bx)$ with
{$\max_{1\le i\le n}d^q(x,\Omega_i)<\al\rho$};

\item
$\al-$transversal of order $q$ at $\bx$ if there exist $\delta_1>0$ and $\delta_2>0$ such that
\begin{align}\label{D0-3}
\bigcap_{i=1}^n(\Omega_i-\omega_i-x_i)\cap(\rho\B)\neq\emptyset
\end{align}
for all $\rho\in]0,\delta_1[$, $\omega_i\in\Omega_i\cap B_{\delta_2}(\bar{x})$ and $x_i\in{X}$ $(i=1,\ldots,n)$ with $\max_{1\le i\le n}\|x_i\|^q<\al\rho$.
\end{enumerate}
\end{definition}

The three properties in the above definition were referred to in \cite{KruTha14} as $[q]-$semi\-regularity, $[q]-$subregularity and $[q]-$regularity, respectively.
{Property} (ii) was defined in \cite{KruTha14} in a slightly different but equivalent way, under an additional
assumption
that $q\le1$.
When {$\cap_{i=1}^n\Omega_i$ is closed and} $\bx\in\bd\cap_{i=1}^n\Omega_i$, the condition $q\le1$ is indeed necessary for the $\al-$subtransversality and $\al-$transversality properties; see Remark~\ref{R2}.
At the same time, as observed in \cite{KruTha14}, the property of $\al-$semitransversality can be meaningful with any positive $q$ (and any positive $\al$); see Example~\ref{E1}.

With $q=1$ (linear case), properties (i) and (iii) in Definition~\ref{D0} were discussed in \cite{Kru06} (see also \cite[Properties~(R)$_S$ and (UR)$_S$]{Kru09}), {while property (ii) first appeared}
in \cite{KruTha15}.
If {$\cap_{i=1}^n\Omega_i$ is closed and} $\bx\in\bd\cap_{i=1}^n\Omega_i$, then one can observe that properties (ii) and (iii) can only hold with $\al\le1$; see Remark~\ref{R2}.
If $q=1$, when referring to the three properties in the above definition, we
talk simply about \emph{$\al-$(semi-/sub-) trans\-versality}.

If a collection $\{\Omega_1,\ldots,\Omega_n\}$ is $\al-$semitransversal (respectively, $\al-$subtransversal or $\al-$trans\-ver\-sal) of order $q$ at $\bx$ with some $\al>0$ and $\delta>0$ (or $\delta_1>0$ and $\delta_2>0$), we often simply say that $\{\Omega_1,\ldots,\Omega_n\}$ is \emph{semitransversal} (respectively, \emph{subtransversal} or \emph{transversal}) of order $q$ at $\bx$.
The number $\al$ characterizes the corresponding property quantitatively.
The exact upper bound of all $\al>0$ such that the property holds with some $\delta>0$ (or $\delta_1>0$ and $\delta_2>0$) is called the \emph{modulus} of this property.
We use the notations s$_{\rm e}$tr$_q[\Omega_1,\ldots,\Omega_n](\bx)$, str$_q[\Omega_1,\ldots,\Omega_n](\bx)$ and tr$_q[\Omega_1,\ldots,\Omega_n](\bx)$ for the moduli of the respective properties.
If the property does not hold, then by convention the respective modulus equals 0.

If $q<1$, the H\"older transversality properties in Definition~\ref{D0} are obviously weaker than the corresponding conventional linear properties and can be satisfied for collections of sets when the conventional ones fail.
This can happen in many natural situations (see examples in \cite[Section~2.3]{KruTha14}), which explains the growing interest of researchers to studying the more subtle nonlinear transversality properties.

Our basic notation is standard, see, e.g., \cite{RocWet98,Mor06.1,DonRoc14}.
Throughout the paper, $X$ and $Y$ are either metric or, more often, normed vector spaces.
The open unit ball in any space is denoted by $\B$, and $B_\delta(x)$ stands for the open ball with center $x$ and radius $\delta>0$.
If not explicitly stated otherwise, products of normed vector spaces are assumed to be equipped with the maximum norm $\|(x,y)\|:=\max\{\|x\|,\|y\|\}$, $(x,y)\in X\times Y$.
The symbols $\R$ and $\R_+$ denote the real line (with the usual norm) and the set of all nonnegative real numbers, respectively.

Given a set $\Omega$,
its interior and boundary
are denoted by $\Int\Omega$ and $\bd\Omega$, respectively.
The distance from a point $x$ to $\Omega$ is defined by $d(x,\Omega):=\inf_{u \in \Omega}\|u-x\|$, and we use the convention $d(x,\emptyset) = +\infty$.
The indicator function of $\Omega$ is defined as follows: $i_\Omega(x)=0$ if $x\in \Omega$ and $i_\Omega(x)=+\infty$ if $x\notin \Omega$.

For an extended-real-valued function $f:X\to\R\cup\{+\infty\}$, 
its domain and epigraph are defined,
respectively, by $\dom f:=\{x \in X\mid f(x) < +\infty\}$ and
$\epi f:=\{(x,\alpha) \in X \times \mathbb{R}\mid f(x) \le \alpha\}$.
The inverse of $f$ (if it exists) is denoted by $f\iv$.
A set-valued mapping $F:X\rightrightarrows Y$ between two sets $X$ and $Y$ is a mapping, which assigns to every $x\in X$ a subset (possibly empty) $F(x)$ of $Y$.
We use the notations $\gph F:=\{(x,y)\in X\times Y\mid
y\in F(x)\}$ and $\dom\:F:=\{x\in X\mid F(x)\ne\emptyset\}$
for the graph and the domain of $F$, respectively, and $F^{-1}: Y\rightrightarrows X$ for the inverse of $F$.
This inverse (which always exists with possibly empty values at some $y$) is defined by $F^{-1}(y):=\{x\in X \mid y\in F(x)\}$, $y\in Y$.
Obviously ${\dom F^{-1}=F(X)}$.

The closed and open intervals between points $x_1$ and $x_2$ in a normed space are defined, respectively, by
\begin{align*}
[x_1,x_2]:=\{tx_1+(1-t)x_2\mid t\in[0,1]\},
\quad
]x_1,x_2[:=\{tx_1+(1-t)x_2\mid t\in]0,1[\}.
\end{align*}
{The semi-open intervals} $]x_1,x_2]$ and $[x_1,x_2[$ are defined in a similar way.

The key tool in the proofs of the main results is the celebrated Ekeland variational principle; cf. \cite{Mor06.1,DonRoc14,Pen13,Iof17}.

\begin{lemma}\label{EVP}
Suppose $X$ is a complete metric space, $f:X\to\mathbb{R}\cup\{+\infty\}$ is lower semicontinuous,
$x\in X$, $\varepsilon>0$ and $\lambda>0$.
If
\begin{align*}
f(x)<\inf_{X} f+\varepsilon,
\end{align*}
then there exists an $\hat x\in X$ such that
\begin{enumerate}
\item
$d(\hat{x},x)<\lambda$;
\item
$f(\hat{x})\le f(x)$;
\item
$f(u)+(\varepsilon/\lambda)d(u,\hat{x})\ge f(\hat{x})$ for all $u\in X.$
\end{enumerate}
\end{lemma}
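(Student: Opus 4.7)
The plan is to prove Lemma~\ref{EVP} by the classical argument of Ekeland, based on an iterative construction of a $\preceq$-decreasing sequence relative to a suitable partial order that encodes the desired inequality in~(iii).

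First, I introduce the order $y\preceq z$ on $X$ defined by
\begin{align*}
f(y)+(\varepsilon/\lambda)d(y,z)\le f(z).
\end{align*}
Reflexivity and antisymmetry are immediate, and transitivity follows from the triangle inequality combined with the two defining inequalities. For each $z\in X$, the sublevel set $S(z):=\{y\in X\mid y\preceq z\}$ is nonempty (it contains $z$) and, since $f$ is lower semicontinuous and $d(\cdot,z)$ is continuous, it is closed.

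Next I construct a sequence $(x_k)$ as follows. Set $x_0:=x$, and, having chosen $x_k$, pick $x_{k+1}\in S(x_k)$ with
\begin{align*}
f(x_{k+1})\le\inf_{S(x_k)}f+2^{-(k+1)}.
\end{align*}
The sequence satisfies $x_{k+1}\preceq x_k$, and the key observation is that for any $y\in S(x_{k+1})$ one has $y\in S(x_k)$, hence $f(y)\ge\inf_{S(x_k)}f\ge f(x_{k+1})-2^{-(k+1)}$, which combined with $(\varepsilon/\lambda)d(y,x_{k+1})\le f(x_{k+1})-f(y)$ yields $d(y,x_{k+1})\le(\lambda/\varepsilon)2^{-(k+1)}$. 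Thus the diameters of the nested closed sets $S(x_k)$ tend to $0$, so by completeness the intersection reduces to a single point $\hat x:=\lim_kx_k$, and $\hat x\preceq x_k$ for all $k$, in particular $\hat x\preceq x$.

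Finally, I extract the three conclusions. From $\hat x\preceq x$,
\begin{align*}
f(\hat x)+(\varepsilon/\lambda)d(\hat x,x)\le f(x),
\end{align*}
which yields (ii) directly and, using the hypothesis $f(x)<\inf_Xf+\varepsilon\le f(\hat x)+\varepsilon$, gives $d(\hat x,x)<\lambda$, i.e.\ (i). For (iii), suppose some $u\in X$ satisfies $f(u)+(\varepsilon/\lambda)d(u,\hat x)<f(\hat x)$; then $u\preceq\hat x$ and, by transitivity, $u\in S(x_k)$ for every $k$, so $u=\hat x$, forcing $f(u)<f(\hat x)$, a contradiction. Hence the inequality in (iii) holds for every $u\in X$.

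The only delicate step is ensuring that $\hat x$ is a genuine minimal element for $\preceq$; this is exactly what the vanishing-diameter argument above guarantees, and it is the heart of the proof. The rest is bookkeeping with the triangle inequality and the hypothesis $f(x)<\inf_Xf+\varepsilon$.
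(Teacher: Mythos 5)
The paper does not actually prove Lemma~\ref{EVP}: it is the classical Ekeland variational principle, quoted with references to \cite{Mor06.1,DonRoc14,Pen13,Iof17}. Your argument is the standard proof found in those sources (the order $y\preceq z$ iff $f(y)+(\varepsilon/\lambda)d(y,z)\le f(z)$, the almost-minimizing iteration $f(x_{k+1})\le\inf_{S(x_k)}f+2^{-(k+1)}$, and the vanishing-diameter/Cantor-intersection step), and it is correct; the hypothesis $f(x)<\inf_Xf+\varepsilon$ guarantees that $f$ is bounded below and $f(x)<+\infty$, so every step is well defined. The only cosmetic remark is that $\preceq$ is merely a preorder where $f=+\infty$ (antisymmetry can fail there), but this is harmless: every set $S(x_k)$ lies in $\dom f$, and uniqueness of $\hat x$ comes from the diameter estimate rather than from antisymmetry.
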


The \emph{slope} \cite{DegMarTos80} and \emph{nonlocal slope} \cite{NgaThe08,Kru15} of a function $f:X\rightarrow \R\cup\{+\infty\}$ on a metric space at $x\in\dom f$ are defined, respectively, by
\begin{align*}
|\nabla f|(x):=\limsup_{u\rightarrow x,\,u\ne x}\dfrac{ [f(x)-f(u)]_+}{d(x,u)}
,\quad
|\nabla f|^\diamond(x):=\sup\limits_{u\ne x}
\dfrac{[f(x)-f_+(u)]_+}{d(x,u)},
\end{align*}
where $\al_+:=\max\{0,\al\}$ for any $\al\in\R$.
The limit $|\nabla f|(x)$ provides the rate of steepest descent
of $f$ at $x$.
If $X$ is a normed space, and $f$ is Fr\'echet differentiable at $x$, then $|\nabla f|(x)=\|f'(x)\|$.
When $x\notin\dom f$, we set $|\nabla f|(x)=|\nabla f|^\diamond(x):=+\infty$.
The next proposition is straightforward.

\begin{proposition}\label{P1.1}
Suppose $X$ is a metric space, $f:X\to\mathbb{R}\cup\{+\infty\}$, and $x\in X$.
\begin{enumerate}
\item
If $f$ is not lower semicontinuous at $x$, then $|\nabla f|(x)=+\infty$.
\item
If $f(x)>0$, then
$|\nabla f|(x)\le |\nabla f|^\diamond(x)$.
\end{enumerate}
\end{proposition}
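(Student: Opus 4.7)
The plan is to dispose of each part via case analysis, relying on the convention $|\nabla f|(x)=|\nabla f|^\diamond(x):=+\infty$ when $x\notin\dom f$ to clear the trivial sub-cases first.

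For part (i), if $x\notin\dom f$ the claim is immediate from the convention, so I would concentrate on the case $x\in\dom f$. Here the failure of lower semicontinuity at $x$ delivers a sequence $u_n\to x$, $u_n\ne x$ (any terms equal to $x$ can be discarded without affecting the $\liminf$), together with some $\eps>0$ such that $f(u_n)<f(x)-\eps$ along a subsequence. Then
\begin{align*}
\frac{[f(x)-f(u_n)]_+}{d(x,u_n)}\ge\frac{\eps}{d(x,u_n)}\To+\infty,
\end{align*}
which forces the limsup defining $|\nabla f|(x)$ to equal $+\infty$.

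For part (ii) the convention again settles $x\notin\dom f$, so I may assume $0<f(x)<+\infty$. I would split on whether $f$ is lsc at $x$. If $f$ is not lsc at $x$, part (i) already yields $|\nabla f|(x)=+\infty$, and I would reuse the same witness sequence $u_n$ to show $|\nabla f|^\diamond(x)=+\infty$ as well. A short case split on the sign of $f(u_n)$ gives $[f(x)-f_+(u_n)]_+\ge\min\{\eps,f(x)\}>0$ (if $f(u_n)\ge 0$ then $f_+(u_n)=f(u_n)\le f(x)-\eps$; if $f(u_n)<0$ then $f_+(u_n)=0$ so the numerator equals $f(x)$), whence the ratio to $d(x,u_n)\to 0$ blows up and the sup in $|\nabla f|^\diamond(x)$ is infinite. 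If instead $f$ is lsc at $x$, then $\liminf_{u\to x}f(u)\ge f(x)>0$ yields a neighbourhood of $x$ on which $f>0$, and on that neighbourhood $f_+=f$; hence the two numerators $[f(x)-f(u)]_+$ and $[f(x)-f_+(u)]_+$ agree for all $u$ close enough to $x$. Taking the limsup of the first expression therefore produces a quantity dominated by the global supremum defining $|\nabla f|^\diamond(x)$.

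The only step that is not entirely mechanical is the sign-of-$f(u_n)$ case split in the non-lsc branch of part (ii), but it is elementary; in particular, no completeness of $X$ and no appeal to the \EVP\ is required here. The positivity hypothesis $f(x)>0$ is used precisely once, to guarantee a neighbourhood on which $f_+$ and $f$ coincide in the lsc branch.
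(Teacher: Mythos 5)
Your proof is correct. The paper offers no argument for this proposition (it is dismissed with ``The next proposition is straightforward''), so there is nothing to compare against; your write-up simply supplies the missing details. The one point where care is genuinely required is that $f_+(u)\ge f(u)$ makes the numerator $[f(x)-f_+(u)]_+$ \emph{smaller} than $[f(x)-f(u)]_+$ when $f(u)<0$, so the inequality in (ii) is not termwise; your split into the non-lsc branch (where both slopes are $+\infty$) and the lsc branch (where $f(x)>0$ forces $f>0$, hence $f_+=f$, near $x$) handles exactly this, and correctly isolates the role of the hypothesis $f(x)>0$.
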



When proving primal and dual characterizations of transversality properties in the nonlinear setting we use chain rules for slopes and subdifferentials, respectively.
The next lemma provides a chain rule for slopes, which is used in Section~\ref{S3}.
For its subdifferential counterparts we refer the reader to \cite[Proposition~2.1]{CuoKru5}.

\begin{lemma}\label{L2}
Let $X$ be a metric space, $f:X\rightarrow\R\cup\{+\infty\}$,
$\varphi:\R\rightarrow\R\cup\{+\infty\}$,
${x\in\dom f}$ and $f(x)\in\dom\varphi$.
Suppose $\varphi$ is {nondecreasing} on $\R$ and differentiable at $f(x)$ {with $\varphi'(f(x))>0$}.
Then
$|\nabla(\varphi\circ f)|(x)=\varphi'(f(x))|\nabla f|(x)$.
\end{lemma}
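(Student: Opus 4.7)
My plan is to prove the equality by establishing the two matching inequalities, using the definition of the slope together with a first-order expansion of $\varphi$ around $y_0 := f(x)$. Writing $\psi := \varphi \circ f$, the key observation is that for $u$ close to $x$ the quotient
\begin{align*}
\frac{[\psi(x)-\psi(u)]_+}{d(x,u)} \quad\text{and}\quad \frac{[f(x)-f(u)]_+}{d(x,u)}
\end{align*}
are linked via $\varphi$: the monotonicity of $\varphi$ forces both positive parts to vanish simultaneously when $f(u)\ge f(x)$, and on the complementary set the differentiability of $\varphi$ at $y_0$ lets us replace $\varphi(y_0)-\varphi(f(u))$ by $\varphi'(y_0)(y_0-f(u))$ up to arbitrarily small error. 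I would first dispose of the case where $f$ fails to be lower semicontinuous at $x$: then $|\nabla f|(x)=+\infty$ by Proposition~\ref{P1.1}(i), and a sequence $u_n\to x$ with $\liminf f(u_n)<f(x)$ produces, via the strict monotonicity of $\varphi$ near $y_0$ (which follows from $\varphi'(y_0)>0$), a corresponding sequence along which $\psi(u_n)$ stays bounded above $\psi(x)$ by a fixed positive gap, hence $|\nabla\psi|(x)=+\infty$ as well, and the identity holds trivially.

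For the upper estimate $|\nabla\psi|(x)\le\varphi'(y_0)|\nabla f|(x)$, fix $\eps>0$ and use differentiability to choose $\eta>0$ so that $\varphi(y_0)-\varphi(y)\le(\varphi'(y_0)+\eps)(y_0-y)$ whenever $y_0-\eta<y\le y_0$. Since $f$ is lsc at $x$, there is a neighbourhood of $x$ in which $f(u)>f(x)-\eta$; on that neighbourhood, either $f(u)\ge f(x)$ (so both positive parts vanish and the inequality is trivial because $\varphi$ is nondecreasing) or $f(u)<f(x)$, in which case the expansion yields
\begin{align*}
[\varphi(f(x))-\varphi(f(u))]_+ \le (\varphi'(y_0)+\eps)\,[f(x)-f(u)]_+.
\end{align*}
Dividing by $d(x,u)$ and passing to $\limsup$ gives $|\nabla\psi|(x)\le(\varphi'(y_0)+\eps)|\nabla f|(x)$; letting $\eps\downarrow 0$ finishes this direction.

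The reverse inequality is entirely symmetric: for $\eps\in(0,\varphi'(y_0))$ pick $\eta$ so that $\varphi(y_0)-\varphi(y)\ge(\varphi'(y_0)-\eps)(y_0-y)$ for $y\in(y_0-\eta,y_0]$, and restrict to the neighbourhood of $x$ where $f(u)>f(x)-\eta$. On that neighbourhood one obtains $[\varphi(f(x))-\varphi(f(u))]_+\ge(\varphi'(y_0)-\eps)[f(x)-f(u)]_+$, and taking $\limsup$ followed by $\eps\downarrow 0$ yields $|\nabla\psi|(x)\ge\varphi'(y_0)|\nabla f|(x)$. The only delicate point to get right is ensuring that the inequalities above remain meaningful on the set where $f(u)\ge f(x)$ (trivial, both sides are zero) and on the set where $f(u)\in\dom\varphi$ is beyond the range where the first-order expansion is valid (ruled out by the lsc-forced smallness of $[f(x)-f(u)]_+$ near $x$). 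This is where I expect to spend the most care: cleanly compressing the case analysis on the sign and magnitude of $f(u)-f(x)$ into a single inequality valid on a whole neighbourhood of $x$.
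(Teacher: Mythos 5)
Your proof is correct and takes essentially the same route as the paper's: dispose of the non-lsc case via Proposition~\ref{P1.1}(i) (using that $\varphi'(f(x))>0$ forces strict growth of $\varphi$ to the left of $f(x)$), then use the monotonicity of $\varphi$ to kill the contribution of points with $f(u)\ge f(x)$ and the differentiability of $\varphi$ at $f(x)$, combined with lower semicontinuity of $f$ at $x$, to handle the remaining quotient. The only difference is presentational: the paper computes the limsup exactly by restricting to $f(u)\uparrow f(x)$ and factoring the difference quotient, whereas you squeeze it between two one-sided $\varepsilon$-estimates, which also absorbs the paper's separate local-minimum case.
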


\begin{proof}
If $x$ is a local minimum of $f$, then, thanks to the monotonicity of $\varphi$, it is also a local minimum of $\varphi \circ f$, and consequently, $|\nabla(\varphi\circ f)|(x)=|\nabla f|(x)=0.$
Suppose $x$ is not a local minimum of $f$.
If $f$ is not \lsc\ at $x$, i.e. $\al:=\lim_{k\to+\infty}f(x_k)<f(x)$ for some sequence $x_k\to x$,
then, in view of the assumptions,
$\varphi$ is strictly increasing near $f(x)$,
and consequently,
$\liminf_{k\to+\infty}\varphi(f(x_k))\le\varphi(\al)<\varphi(f(x))$ (with the convention that $\varphi(-\infty)=-\infty$), i.e.
$\varphi\circ f$ is not \lsc\ at $x$; hence, in view of Proposition~\ref{P1.1}(i),
$|\nabla(\varphi\circ f)|(x)=|\nabla f|(x)=+\infty$.
Suppose $f$ is \lsc\ at $x$, i.e.
$\liminf_{u\to x,\,u\ne x}f(u)=f(x)$.
Then, taking into account that $x$ is not a local minimum of $f$,
\begin{align*}
|\nabla(\varphi\circ f)|(x)
&=\limsup_{\substack{u\rightarrow x,\,u \ne x}} \dfrac{\varphi(f(x))-\varphi(f(u))}{d(u,x)}\\
&=\limsup_{\substack{u\rightarrow x,\,u \ne x\\ f(u)<f(x)}} \dfrac{\varphi(f(x))-\varphi(f(u))}{d(u,x)}
=\limsup_{\substack{u\rightarrow x,\,u \ne x\\ f(u)\uparrow f(x)}} \dfrac{\varphi(f(x))-\varphi(f(u))}{d(u,x)}\\
&=\limsup_{\substack{u\rightarrow x,\,u\ne x\\
f(u)\uparrow f(x)}} \left(\dfrac{\varphi(f(x))-\varphi(f(u))}{f(x)-f(u)} \cdot \dfrac{f(x)-f(u)}{d(u,x)}\right)\\
&=\varphi'(f(x))\limsup_{\substack{u\rightarrow x,\,u\ne x}}\dfrac{f(x)-f(u)}{d(u,x)}
=\varphi'(f(x))|\nabla f|(x).
\end{align*}
The proof is complete.
\qed\end{proof}

\begin{remark}\label{R01}
\begin{enumerate}
\item
The slope chain rule in Lemma \ref{L2} is a local result.
Instead of assuming that $\varphi$ is defined on the whole real line, one can assume that $\varphi$ is defined and finite on a closed interval $[\al,\be]$ around the point $f(x)$: ${\al<f(x)<\be}$.
It is sufficient to define the composition $\varphi\circ f$ for $x$ with $f(x)\notin[\al,\be]$ as follows:
$(\varphi{\circ}f)(x):=\varphi(\al)$ if $f(x)<\al$, and
$(\varphi{\circ}f)(x):=\varphi(\be)$ if $f(x)>\be$.
This does not affect the conclusion of the lemma.

\item
Lemma \ref{L2} slightly improves \cite[Lemma~4.1]{AzeCor17}, where $f$ and $\varphi$ are assumed lower semicontinuous and continuously differentiable, respectively.
\end{enumerate}
\end{remark}	

The rest of the paper is organized as follows.
In Section~\ref{S2}, we discuss transversality properties of finite collections of sets in the nonlinear setting.
In Section \ref{S3}, we establish metric characterizations of these properties.
Section \ref{S4} is devoted to slope sufficient conditions for the nonlinear transversality properties.
In Section~\ref{S5}, we discuss quantitative relations between nonlinear transversality of collections of sets and the corresponding nonlinear regularity properties of set-valued mappings, and show that the two popular models are in a sense equivalent in the general nonlinear setting.
As a consequence, we improve some results established in \cite{KruTha14} in the H\"older setting.
We also briefly discuss nonlinear extensions of the new {transversality properties of a \SVM\ to a set in the range space} due to Ioffe \cite{Iof17}.

\section{Definitions and Basic Relations}\label{S2}

The nonlinearity in the {definitions} of the transversality properties is determined by a continuous strictly increasing function $\varphi:\mathbb{R}_+ \rightarrow\mathbb{R}_+$ satisfying $\varphi(0) = 0$ and $\lim_{t\to+\infty}\varphi(t)=+\infty$.
The family of all such functions is denoted by $\mathcal{C}$.
We denote by $\mathcal{C}^1$ the subfamily of functions from $\mathcal{C}$ which are differentiable on $]0,+\infty[$ with $\varphi'(t)>0$ for all $t>0$.
Obviously, if $\varphi\in\mathcal{C}$ ($\varphi\in\mathcal{C}^1$), then $\varphi\iv\in\mathcal{C}$ ($\varphi\iv\in\mathcal{C}^1$).
Observe that,
for any $\alpha > 0$ and $q > 0$,
the function $t\mapsto\alpha t^q$ on $\R_+$ belongs to $\mathcal{C}^1$.
\sloppy

\begin{remark}
For the purposes of the paper, it is sufficient to assume that functions
$\varphi\in\mathcal{C}$ are defined and invertible near 0.	
\end{remark}	

In addition to our standing assumption that
$\Omega_1,\ldots,\Omega_n$ are subsets of a normed space $X$ and $\bx\in\cap_{i=1}^n\Omega_i$,
if not explicitly stated otherwise,
we assume from now on
that ${\varphi\in\mathcal{C}}$.	

\begin{definition}\label{D1}
The collection $\{\Omega_1,\ldots,\Omega_n\}$ is
\begin{enumerate}
\item
$\varphi-$semitransversal at $\bx$ if there exists a $\delta>0$ such that condition \eqref{D0-1} is satisfied
for all $\rho \in ]0,\delta[$ and $x_i\in{X}$ $(i=1,\ldots,n)$ with $\varphi(\max_{1\le i\le n}\|x_i\|)<\rho$;

\item
$\varphi-$subtransversal at $\bx$ if there exist $\delta_1>0$ and $\delta_2>0$ such that condition \eqref{D0-2} is satisfied
for all $\rho\in]0,\de_1[$ and $x\in B_{\de_2}(\bx)$ with {$\varphi(\max_{1\le i\le n}d(x,\Omega_i))<\rho$};

\item
$\varphi-$transversal at $\bx$ if there exist $\delta_1>0$ and $\delta_2>0$ such that condition \eqref{D0-3} is satisfied for all $\rho\in]0,\delta_1[$, $\omega_i\in\Omega_i\cap B_{\delta_2}(\bar{x})$ and $x_i\in{X}$ {$(i=1,\ldots,n)$ with $\varphi(\max_{1\le i\le n}\|x_i\|)<\rho$}.
\end{enumerate}
\end{definition}

Observe that conditions \eqref{D0-1} and \eqref{D0-3} are trivially satisfied when $x_i=0$ $(i=1,\ldots,n)$.
Hence, in parts (i) and (iii) of Definition~\ref{D1} (as well as Definition~\ref{D0}) one can additionally assume that $\max_{1\le i\le n}\|x_i\|>0$.
Similarly, in part (ii) of Definition~\ref{D1} (as well as Definition~\ref{D0}) one can assume that $x\notin\cap_{i=1}^n\Omega_i$.

Each of the properties in Definition~\ref{D1} is determined by a function $\varphi\in\mathcal{C}$, and a number $\de>0$ in item (i) or numbers $\de_1>0$ and $\de_2>0$ in items (ii) and (iii).
The function plays the role of a kind of rate or modulus of the respective property, while the role of the $\de$'s is more technical: they control the size of the interval for the values of $\rho$ and, in the case of $\varphi-$subtransversality and $\varphi-$transversality in parts (ii) and (iii), the size of the \nbh s of $\bx$ involved in the respective definitions.
Of course, if a property is satisfied with some $\de_1>0$ and $\de_2>0$, it is satisfied also with the single $\de:=\min\{\de_1,\de_2\}$ in place of both $\de_1$ and $\de_2$.
Unlike our previous publications on (linear and H\"older) transversality properties, we use in the current paper
two different parameters to emphasise their different roles in the definitions and the corresponding characterizations.
Moreover, we are going to provide quantitative estimates for the values of these parameters.

\if{
\NDC{25/3/19.
Should we mention that this is the first time we use two $\de'$s in the definition of the properties?}
\AK{26/3/19.
In the Introduction?}
}\fi

Given a $\de>0$ in item (i) ($\de_1>0$ and $\de_2>0$ in items (ii) and (iii)), if a property is satisfied for some function $\varphi\in\mathcal{C}$, it is obviously satisfied for any function $\hat\varphi\in\mathcal{C}$ such that $\hat\varphi\iv(t)\le\varphi\iv(t)$ for all $t\in]0,\de[$ ($t\in]0,\de_1[$), or equivalently, $\hat\varphi(t)\ge\varphi(t)$ for all $t\in]0,\varphi\iv(\de)[$ ($t\in]0,\varphi\iv(\de_1)[$).
Thus, it makes sense looking for the smallest function in ${\mathcal{C}}$ (if it exists) ensuring the corresponding property for the given sets.
Observe also that taking a smaller $\de>0$ (smaller $\de_1>0$ and $\de_2>0$) may allow each of the properties to be satisfied with a smaller $\varphi$.
When the exact value of $\de$ ($\de_1$ and $\de_2$) in the definition of the respective property is not important,
it makes sense to look for the smallest function ensuring the corresponding property for some $\delta>0$ ($\de_1$ and~$\de_2$).

The most important realization of the three properties in Definition~\ref{D1} corresponds to the H\"older setting, i.e. $\varphi$ being a power function, given for all $t\ge0$ by $\varphi(t):=\alpha^{-1}t^q$ with some $\alpha>0$ and $q>0$.
In this case, Definition~\ref{D1} reduces to Definition~\ref{D0}.

Another important for applications class of functions is given by the so called \emph{H\"older-type} \cite{BolNguPeySut17,Li13} ones, i.e. functions of the form $t\mapsto\al\iv(t^q+t)$, frequently used in the error bound theory, 
or more generally, functions ${t\mapsto\al\iv(t^q+\be t)}$ with some ${\al>0}$, $\be>0$ and $q>0$.
Depending on the value of $q$, transversality properties determined by such functions can be approximated by H\"older (if $q<1$) or even linear (if $q\ge1$) ones.

\if{
\NDC{2.11.19
I prefer to put the phrase `The collection' directly after a dot.
In other cases, I think we can drop this phrase for simplicity.
What do you think?}
}\fi

\begin{proposition}\label{P2.3}
Let $\varphi(t):=\al\iv(t^q+\be t)$ with some $\al>0$, $\be>0$ and $q>0$.
If the collection $\{\Omega_1,\ldots,\Omega_n\}$ is $\varphi-$(semi-/sub-)transversal at $\bx$,
then it is $\al'-$(semi-/sub-) transversal of order $q'$ at $\bx$, where:
\begin{enumerate}
\item
if $q<1$, then $q'=q$ and $\al'$ is any number in $]0,\al[$;
\item
if $q=1$, then $q'=1$ and $\al':=\al(1+\be)\iv$;
\item
if $q>1$, then $q'=1$ and $\al'$ is any number in $]0,\al\be\iv[$.
\end{enumerate}
\end{proposition}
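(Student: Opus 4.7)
The plan is to reduce the whole statement to a pointwise comparison between $\varphi$ and the candidate H\"older function $t\mapsto(\alpha')\iv t^{q'}$ on a small neighbourhood of zero. Inspecting Definition~\ref{D1}, each of the three $\varphi-$(semi-/sub-)transversality properties is an implication of the form: for $\rho\in]0,\delta[$ (or $]0,\delta_1[$) and data satisfying a bound of the shape $\varphi(\max_i\|\cdot\|)<\rho$ or $\varphi(\max_id(\cdot,\Omega_i))<\rho$, the corresponding intersection is nonempty. To extract from this the analogous H\"older implication with $\max_i\|\cdot\|^{q'}<\alpha'\rho$ (respectively $\max_id^{q'}(\cdot,\Omega_i)<\alpha'\rho$) in place of the $\varphi-$bound, it suffices to establish that, on some interval $]0,\delta'[$ for $\rho$, one has
\begin{equation*}
t\ge 0,\quad t^{q'}<\alpha'\rho \quad\Longrightarrow\quad \varphi(t)=\alpha\iv(t^q+\beta t)<\rho.
\end{equation*}
Once this elementary implication is available, the three transversality conclusions follow by direct substitution in Definition~\ref{D1}, taking in the corresponding Definition~\ref{D0} statements the radius $\min\{\delta,\delta'\}$ (respectively $\min\{\delta_1,\delta'\}$, while keeping $\delta_2$ unchanged).

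It remains to verify the displayed implication in each case. In case (ii), $q=1$ and $\alpha':=\alpha(1+\beta)\iv$, and one computes directly $\varphi(t)=\alpha\iv(1+\beta)t=(\alpha')\iv t$, so the implication is in fact an equivalence and holds for every $\rho>0$ with no smallness restriction. In case (i), $q<1$ and $\alpha'\in]0,\alpha[$; the hypothesis yields $t<(\alpha'\rho)^{1/q}$, hence $\beta t<\beta(\alpha')^{1/q}\rho^{1/q}$. Since $1/q>1$, this upper bound is dominated by $(\alpha-\alpha')\rho$ as soon as $\rho^{(1-q)/q}\le(\alpha-\alpha')\beta\iv(\alpha')^{-1/q}$, giving $t^q+\beta t<\alpha'\rho+(\alpha-\alpha')\rho=\alpha\rho$. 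In case (iii), $q>1$ and $\alpha'\in]0,\alpha\beta\iv[$; the hypothesis gives $t<\alpha'\rho$, hence $\beta t<\beta\alpha'\rho$ and $t^q<(\alpha')^q\rho^q$. Since $q>1$, the latter is at most $(\alpha-\beta\alpha')\rho$ as soon as $\rho^{q-1}\le(\alpha-\beta\alpha')(\alpha')^{-q}$, and adding the two bounds again gives $\varphi(t)<\rho$. In each nontrivial case the defining inequality on $\rho$ produces an explicit positive threshold $\delta'$ depending only on $\alpha$, $\beta$, $q$, and $\alpha'$.

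The only genuine difficulty is structural rather than computational: the strict inequalities $\alpha'<\alpha$ in case (i) and $\alpha'<\alpha\beta\iv$ in case (iii) are precisely what lets one absorb the non-leading term of $\varphi$ near zero into the leading one, and they explain why these cases yield only an open range for $\alpha'$, whereas case (ii) produces a sharp explicit value. Apart from this comparison, the argument uses no tool beyond Definitions~\ref{D0} and \ref{D1} themselves; in particular the \EVP, slopes, and the chain rule of Lemma~\ref{L2} are not needed.
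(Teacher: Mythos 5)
Your proposal is correct and follows essentially the same route as the paper's proof: both reduce the statement to the elementary observation that, near zero, the non-leading term of $\varphi$ can be absorbed into the leading one thanks to the strict inequalities $\al'<\al$ (resp. $\al'<\al\be\iv$), so that the H\"older constraint on the data implies the $\varphi-$constraint, after which the conclusion is immediate from Definitions~\ref{D0} and~\ref{D1}. The only cosmetic difference is that the paper states the comparison as $\varphi(t)<(\al')\iv t^{q'}$ for all sufficiently small $t$, whereas you phrase it as a sublevel-set implication for all sufficiently small $\rho$; these are equivalent, and your version makes the resulting threshold $\de'$ slightly more explicit.
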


\begin{proof}
The assertions follow from Definition~\ref{D0} in view of the following observations:
\begin{enumerate}
\item
if $q<1$ and $\al'\in]0,\al[$, then, for all sufficiently small $t>0$, it holds ${\al'{(1+\be t^{1-q})}<\al}$, and consequently,
$\varphi(t)=\al\iv(1+\be t^{1-q})t^q<(\al')\iv t^q$;
\sloppy

\item
if $q=1$ and $\al'=\al(1+\be)\iv$, then $\varphi(t)=\al\iv(1+\be)t=(\al')\iv t$;

\item
{if $q>1$} and $\al'\in]0,\al\be\iv[$, then, for all sufficiently small  $t>0$, it holds $\al'{(\be\iv t^{q-1}+1)}<\al\be\iv$, and consequently, $\varphi(t)=\al\iv\be(\be\iv t^{q-1}+1)t<(\al')\iv t$.
\qed\end{enumerate}
\end{proof}	

The next two propositions collect some simple facts about the properties in Definition~\ref{D1} and clarify relationships between them.

\begin{proposition}\label{pro1}
\begin{enumerate}
\item
If $\Omega_1=\ldots=\Omega_n$, and there exists a $\de_1> 0$ such that $\varphi(t) \ge t$ for all $t \in ]0,\de_1[$, then $\{\Omega_1,\ldots,\Omega_n\}$ is $\varphi-$subtransversal at
$\bx$ with $\delta_1$ and any $\de_2>0$.
\item
If {$\{\Omega_1,\ldots,\Omega_n\}$} is $\varphi-$transversal at $\bx$ with some $\de_1>0$ and $\de_2>0$, then it is $\varphi-$se\-mi\-transversal at $\bx$
with $\delta_1$ and $\varphi-$subtransversal at $\bx$
with any $\de'_1\in]0,\de_1]$ and $\de'_2>0$ such that $\varphi\iv(\de'_1)+\de'_2\le\de_2$.

\item
If $\bx\in\Int\cap_{i=1}^{n}\Omega_i$, then {$\{\Omega_1,\ldots,\Omega_n\}$} is $\varphi-$transversal at $\bx$ with some $\de_1>0$ and $\de_2>0$.
\end{enumerate}	
\end{proposition}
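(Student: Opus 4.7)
The plan is to verify each of the three parts directly from Definition~\ref{D1}, the only ingredients being monotonicity/continuity of $\varphi$ and routine bookkeeping with the $\de$'s.

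For part (i), I observe that with all sets equal to a single $\Omega$, the condition $\max_{1\le i\le n}d(x,\Omega_i)=d(x,\Omega)$, so I just need to show that $\varphi(d(x,\Omega))<\rho<\de_1$ forces $B_\rho(x)\cap\Omega\ne\es$. The case $d(x,\Omega)=0$ is immediate. The case $d(x,\Omega)\ge\de_1$ is ruled out because strict monotonicity and continuity of $\varphi$, together with $\varphi(t)\ge t$ on $]0,\de_1[$, yield by continuity $\varphi(\de_1)\ge\de_1$, hence $\varphi(d(x,\Omega))\ge\de_1>\rho$. On the remaining range $d(x,\Omega)\in]0,\de_1[$, the hypothesis gives $d(x,\Omega)\le\varphi(d(x,\Omega))<\rho$, whence $B_\rho(x)\cap\Omega\ne\es$.

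Part (ii) has two implications. For $\varphi-$semitransversality I simply specialise the definition of $\varphi-$transversality with $\omega_i:=\bx$ (legal since $\bx\in\Omega_i\cap B_{\de_2}(\bx)$); a point $u\in\cap_i(\Omega_i-\bx-x_i)\cap\rho\B$ then gives $u+\bx\in B_\rho(\bx)\cap\cap_i(\Omega_i-x_i)$, which is \eqref{D0-1}. For $\varphi-$subtransversality, given admissible $x$ and $\rho$, let $d_i:=d(x,\Omega_i)$; pick any tolerance $\eps>0$ and, for each $i$, choose $\omega_i\in\Omega_i$ with $\|x-\omega_i\|<d_i+\eps$, setting $x_i:=x-\omega_i$. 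Then $\max_i\|x_i\|<\max_id_i+\eps$, and the triangle inequality $\|\omega_i-\bx\|\le\|x_i\|+\|x-\bx\|<d_i+\eps+\de_2'$ together with $d_i<\varphi\iv(\rho)<\varphi\iv(\de_1')$ shows, provided $\eps$ is small enough, both $\varphi(\max_i\|x_i\|)<\rho$ and $\omega_i\in B_{\de_2}(\bx)$, the latter thanks to the calibration hypothesis $\varphi\iv(\de_1')+\de_2'\le\de_2$. Applying $\varphi-$transversality gives $u\in\rho\B$ with $u+\omega_i+x_i=u+x\in\Omega_i$ for every $i$, so $u+x\in\cap_i\Omega_i\cap B_\rho(x)$.

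Part (iii) is the easiest. Since $\bx\in\Int\cap_i\Omega_i$, fix $r>0$ with $B_r(\bx)\subseteq\cap_i\Omega_i$ and choose $\de_1,\de_2>0$ small enough so that $\de_2+\varphi\iv(\de_1)\le r$. For any admissible $\omega_i,x_i,\rho$, take $u:=0\in\rho\B$; the estimate $\|\omega_i+x_i-\bx\|<\de_2+\varphi\iv(\rho)<\de_2+\varphi\iv(\de_1)\le r$ shows $u+\omega_i+x_i\in B_r(\bx)\subseteq\Omega_i$, establishing $0\in\cap_i(\Omega_i-\omega_i-x_i)\cap\rho\B$.

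The only mildly delicate point is the simultaneous choice of $\eps$ in part~(ii), which must preserve both the size condition $\varphi(\max_i\|x_i\|)<\rho$ (needed to invoke transversality) and the localisation $\omega_i\in B_{\de_2}(\bx)$ (needed for the hypothesis of transversality to apply); the assumption $\varphi\iv(\de_1')+\de_2'\le\de_2$ is exactly calibrated to leave the necessary slack, as the two strict inequalities $d_i<\varphi\iv(\de_1')$ and $\|x-\bx\|<\de_2'$ combine to give room for an arbitrarily small $\eps$. No deeper tools are required; the proposition is essentially a bookkeeping lemma confirming that $\varphi-$transversality is the strongest of the three properties.
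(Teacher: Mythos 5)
Your proof is correct and follows essentially the same route as the paper's: part (i) via the observation that $\varphi(d(x,\Omega))<\rho$ together with $\varphi(t)\ge t$ forces $d(x,\Omega)<\rho$, part (ii) by specialising $\omega_i=\bx$ for semitransversality and choosing near-optimal projections $\omega_i$ (your $\eps$-selection is just an explicit version of the paper's direct choice of $\omega_i$ with $\varphi(\max_i\|x-\omega_i\|)<\rho$), and part (iii) by taking $0$ as the witness inside a ball contained in the intersection. The bookkeeping with $\varphi\iv(\de_1')+\de_2'\le\de_2$ matches the paper's.
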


\if{
\NDC{25/3/19.
I think we should mention that $\{\Omega_1,\ldots,\Omega_n\}$ is $\varphi-$subtransversal with $\delta'_1:=\de_1$ and $\de'_2>0$ such that $\varphi\iv(\de_1)+\de'_2=\de_2$ since I think it make sense to mention just the largest neighborhoods of variables in the definition of the subtransversality and also transversality.}
\AK{26/3/19.
Of course we are looking for the largest neighborhoods.
Observe that taking the largest $\delta'_1$ as you suggest, you automatically get the smallest $\delta'_2$.
Is it fair?
Besides, what if $\varphi\iv(\de_1)>\de_2$?
I am not sure if I did the right thing replacing the equality by the inequality.
We have done it back and force several times already.
Do you prefer the equality?
I am happy to change if you wish, especially because we now do not have to worry about `the existence of the unique solution' as we had to before.
}}\fi

\begin{proof}
\begin{enumerate}
\item
Let $\Omega:=\Omega_1=\ldots=\Omega_n$.
Then condition \eqref{D0-2} becomes $\Omega\cap B_\rho(x)\ne\es$.
This inclusion is trivially satisfied if $\varphi(d(x,\Omega))<\rho$ and $\varphi(\rho)\ge\rho$.
\item
Let $\{\Omega_1,\ldots,\Omega_n\}$ be $\varphi-$transversal at $\bx$
with some $\de_1>0$ and $\de_2>0$.
Since condition \eqref{D0-1} is a particular case of condition \eqref{D0-3} with $\omega_i=\bx$ ($i=1,\ldots,n$), we can conclude that $\{\Omega_1,\ldots,\Omega_n\}$ is $\varphi-$semitransversal at $\bx$
with $\delta_1$.
\smallskip
Let $\de'_1\in]0,\de_1]$ and $\de'_2>0$ be such that $\varphi\iv(\de'_1)+\de'_2\le\de_2$, and let
$\rho\in]0,\de'_1[$ and ${x\in B_{\de'_2}(\bx)}$ with {${\varphi(\max_{1\le i\le n}d(x,\Omega_i))<\rho}$}.
Choose $\omega_i\in\Omega_i$ $(i=1,\ldots,n)$ such that {${\varphi(\max_{1\le i\le n}\|x-\omega_i\|)<\rho}$}.
Then, for any $i=1,\ldots,n$,
\sloppy
\begin{align*}
\|\omega_i-\bx\|\le\|x-\omega_i\|+\|x\|<
\varphi\iv(\rho)+\de'_2<\de_2.
\end{align*}
Set $x_i:=x-\omega_i$ $(i=1,\ldots,n)$.
We have $\rho\in]0,\de_1[$, $\omega_i\in\Omega_i\cap B_{\de_2}(\bx)$ {$(i=1,\ldots,n)$ and ${\varphi(\max_{1\le i\le n}\|x_i\|)<\rho}$}.
By Definition~\ref{D1}(iii), condition \eqref{D0-3} is satisfied. 
This is equivalent to condition \eqref{D0-2}.
In view of Definition~\ref{D1}(ii), $\{\Omega_1,\ldots,\Omega_n\}$ is $\varphi-$subtransversal at $\bx$ with $\de'_1$ and $\de'_2$.

\item
{Let $\bx \in \Int\cap_{i=1}^{n}\Omega_i$.
Choose} numbers $\de_1>0$ and $\de_2>0$ such that, with ${\de:=\varphi\iv(\de_1)+\de_2}$, it holds
$B_{\de}(\bx)\subset\cap_{i=1}^{n}\Omega_i$.
Then, for all $\omega_i \in \Omega_i \cap B_{\de_2}(\bar{x})$ and $x_i\in X$ {$(i=1,\ldots,n)$ with $\varphi(\max_{1\le i\le n}\|x_i\|)<\de_1$}, it holds $0\in\cap_{i=1}^{n}(\Omega_i-\omega_i-x_i)$, and consequently, condition \eqref{D0-3} is satisfied with any $\rho>0$.
Hence, $\{\Omega_1,\ldots,\Omega_n\}$ is $\varphi-$transversal at $\bx$ with $\de_1$ and $\de_2$.
\qed\end{enumerate}
\end{proof}

\begin{remark}
\begin{enumerate}
\item
The inequality $\varphi\iv(\de'_1)+\de'_2\le\de_2$ in Proposition~\ref{pro1}(ii) {and some statements below} can obviously be replaced by the equality $\varphi\iv(\de'_1)+\de'_2=\de_2$ providing in a sense the best estimate for the values of the parameters $\de'_1$ and $\de'_2$.

\item
In the H\"older setting, parts (i) and (iii) of Proposition~\ref{pro1} recapture \cite[Remarks~4 and 3]{KruTha14}, respectively, while part (ii) improves \cite[Remark~1]{KruTha14}.

\item
The nonlinear semitransversality and subtransversality properties are in general independent; see examples in \cite[Section~2.3]{KruTha14} and \cite[Section~3.2]{KruTha15}.
\end{enumerate}
\end{remark}

\begin{proposition}\label{P4}
Let ${\cap_{i=1}^n\Omega_i}$ be closed and $\bx\in\bd{\cap_{i=1}^n\Omega_i}$.
If {$\{\Omega_1,\ldots,\Omega_n\}$} is $\varphi-$sub\-trans\-versal (in particular, if it is $\varphi-$tran\-sversal) at $\bx$ with some $\de_1>0$ and $\de_2>0$, then there exists a $\bar t\in]0,\min\{\de_2,\varphi^{-1}(\de_1)\}[$ such that $\varphi(t)\ge t$ for all $t\in]0,\bar t]$.
\sloppy
\end{proposition}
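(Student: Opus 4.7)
My plan is to convert $\varphi$-subtransversality into a ``distance-form'' estimate and then combine it with an intermediate value argument along a segment emanating from $\bx$.

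\textbf{Step 1 (rephrasing subtransversality).} For any $x\in B_{\de_2}(\bx)$ set $\psi(x):=\max_{1\le i\le n}d(x,\Omega_i)$. If $\varphi(\psi(x))<\de_1$, then for every $\rho\in\,]\varphi(\psi(x)),\de_1[$ condition \eqref{D0-2} gives $\cap_{i=1}^n\Omega_i\cap B_\rho(x)\ne\es$, i.e.\ $d(x,\cap_{i=1}^n\Omega_i)<\rho$. Letting $\rho\downarrow\varphi(\psi(x))$ yields
\begin{align*}
d(x,\cap_{i=1}^n\Omega_i)\le \varphi(\psi(x)).
\end{align*}
Since trivially $\psi(x)\le d(x,\cap_{i=1}^n\Omega_i)$, I obtain $\psi(x)\le\varphi(\psi(x))$ for every such $x$; if moreover $\psi(x)>0$, then setting $s:=\psi(x)$ I get the desired inequality $s\le\varphi(s)$ at this particular value of~$s$.

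\textbf{Step 2 (points near $\bx$ off the intersection).} Because $\cap_{i=1}^n\Omega_i$ is closed and $\bx\in\bd\cap_{i=1}^n\Omega_i$, there is a sequence $x_k\to\bx$ with $x_k\notin\cap_{i=1}^n\Omega_i$. Then $t_k:=\psi(x_k)>0$ (otherwise $x_k\in\cap_{i=1}^n\Omega_i$) and, since $\bx\in\cap_{i=1}^n\Omega_i$, $t_k\le\|x_k-\bx\|\to0$. Fix $k$ large enough that $\|x_k-\bx\|<\de_2$ and $t_k<\varphi^{-1}(\de_1)$; both are possible because $\varphi^{-1}(\de_1)>0$.

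\textbf{Step 3 (intermediate value argument).} The map $\psi$ is $1$-Lipschitz on $X$, being a finite maximum of $1$-Lipschitz functions. Along the segment $[\bx,x_k]$ it is therefore continuous and takes the values $\psi(\bx)=0$ and $\psi(x_k)=t_k$. By the intermediate value theorem, for every $s\in\,]0,t_k]$ there exists $x_s\in\,]\bx,x_k]$ with $\psi(x_s)=s$; moreover $\|x_s-\bx\|\le\|x_k-\bx\|<\de_2$, so $x_s\in B_{\de_2}(\bx)$, and $\varphi(s)\le\varphi(t_k)<\de_1$. Step~1 now applies to $x_s$ and gives $s=\psi(x_s)\le\varphi(\psi(x_s))=\varphi(s)$.

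\textbf{Step 4 (conclusion).} Take $\bar t:=t_k$. By construction $\bar t\in\,]0,\min\{\de_2,\varphi^{-1}(\de_1)\}[$ and $\varphi(t)\ge t$ for all $t\in\,]0,\bar t]$. The case when $\{\Omega_1,\ldots,\Omega_n\}$ is $\varphi$-transversal reduces to the subtransversal one via Proposition~\ref{pro1}(ii), so nothing more is needed.

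The step I expect to require the most care is Step~3: the inequality obtained in Step~1 is, a priori, only available at the single value $s=t_k$, and promoting it to a whole interval is exactly what the continuous parametrization of $\psi$ along the segment $[\bx,x_k]$ achieves. The remaining arithmetic (keeping $x_s$ inside $B_{\de_2}(\bx)$ and $\varphi(s)$ below $\de_1$) is straightforward from the choice of $k$.
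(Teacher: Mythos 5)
Your argument is essentially the paper's: pick a point near $\bx$ outside the (closed) intersection, slide along the segment back to $\bx$, and apply the metric form of subtransversality at each intermediate point; the reduction of the transversal case via Proposition~\ref{pro1}(ii) is also identical. The one place where your write-up is not quite right is Step~2: the parenthetical ``$t_k:=\psi(x_k)>0$, otherwise $x_k\in\cap_{i=1}^n\Omega_i$'' silently assumes that the individual sets $\Omega_i$ are closed, which is not among the hypotheses (only the intersection is assumed closed); $\psi(x_k)=0$ only yields $x_k\in\cap_{i=1}^n\cl\Omega_i$. The claim is nevertheless true here, but it must be derived from the subtransversality itself: if $\psi(x_k)=0$ and $x_k\in B_{\de_2}(\bx)$, your Step~1 gives $d(x_k,\cap_{i=1}^n\Omega_i)\le\varphi(0)=0$, and closedness of the intersection then forces $x_k\in\cap_{i=1}^n\Omega_i$, contradicting the choice of $x_k$. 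The paper sidesteps this entirely by parametrizing the segment with the function $d(\cdot,\cap_{i=1}^n\Omega_i)$ instead of $\psi=\max_{1\le i\le n} d(\cdot,\Omega_i)$, setting $\bar t:=d(\hat x,\cap_{i=1}^n\Omega_i)$, whose positivity follows directly from the closedness of the intersection; since $\psi\le d(\cdot,\cap_{i=1}^n\Omega_i)$, the same chain of inequalities then goes through. With that one-sentence patch your proof is correct.
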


\begin{proof}
Let $\{\Omega_1,\ldots,\Omega_n\}$ be $\varphi-$sub\-transversal at $\bx$ with some $\de_1>0$ and ${\de_2>0}$.
Choose a point $\hat x\notin\cap_{i=1}^{n}\Omega_i$ such that $\|\hat x-\bx\| <\min\{\varphi^{-1}(\de_1),\de_2\}$ and set ${\bar t:=d(\hat x,\cap_{i=1}^{n}\Omega_i)}$.
Then $\bar t<\min\{\varphi^{-1}(\de_1),\de_2\}$.
Besides, $\bar t>0$ since ${\cap_{i=1}^n\Omega_i}$ is closed.
Thanks to the continuity of the function $d(\cdot,\cap_{i=1}^{n}\Omega_i)$,
for any $t\in]0,\bar t]$ there is an ${x\in]\bx,\hat x]}$
such that $d(x,\cap_{i=1}^{n}\Omega_i)=t$.
We have $\|x-\bx\|\le\|\hat x-\bx\|<\de_2$ and ${\varphi(t)\le\varphi(\bar t)<\de_1}$.
Take a $\rho\in]\varphi(t),\de_1[$.
Then {$\varphi(\max_{1\le i\le n}d(x,\Omega_i))\le\varphi(t)<\rho$}.
By Definition~\ref{D1}(ii), $t=d(x,\cap_{i=1}^{n}\Omega_i)<\rho$, and letting $\rho\downarrow\varphi(t)$, we arrive at $t\le\varphi(t)$.
If $\{\Omega_1,\ldots,\Omega_n\}$ is $\varphi-$transversal at $\bx$, the conclusion follows
in view of Proposition~\ref{pro1}{(ii)}.
\qed\end{proof}

\begin{remark}\label{R2}
The conditions on $\varphi$ in {Proposition~\ref{P4} in} the H\"older setting can only be satisfied if either $q<1$, or $q=1$ and $\al\le1$.
This reflects the well known fact that the H\"older subtransversality and transversality properties are only meaningful when $q\le1$ and, moreover, the linear case ($q=1$) is only meaningful when $\al\le1$; cf. \cite[p.~705]{KruLukTha17}, \cite[p.~118]{Kru18}.
The extreme case $q=\al=1$ is in a sense singular for subtransversality as in this case Definition~\ref{D0}(ii) yields $d(x,\cap_{i=1}^{n}\Omega_i)=\max_{1\le i\le n} d(x,\Omega_i)$ for all $x$ near~$\bx$.

In accordance with Proposition~\ref{P4}, the $\varphi-$subtransversality and $\varphi-$transversality properties impose serious restrictions on the function $\varphi$.
This is not the case with the $\varphi-$semi\-transversality property: $\varphi$ can be, e.g., any power function.
\end{remark}

\begin{example}\label{E1}
Let $\R^2$ be {equipped} with the maximum norm, and let $q>0$, $\ga>0$, $\Omega_1:=\left\{(\xi_1,\xi_2)\in\R^2\mid \ga^{\frac{1}{q}}\xi_2+|\xi_1|^{\frac{1}{q}}\ge0\right\}$,
$\Omega_2:=\left\{(\xi_1,\xi_2)\in\R^2\mid \ga^{\frac{1}{q}}\xi_2-|\xi_1|^{\frac{1}{q}}\le0\right\}$
and $\bx:=(0,0)$.
Note that, when $q>1$, the sets $\Omega_1$ and $\Omega_2$ are nonconvex.
We claim that the pair $\{\Omega_1,\Omega_2\}$ is $\varphi-$se\-mi\-trans\-versal at $\bx$ with $\varphi(t):=\ga t^q$ ($t\ge0$).
\end{example}

\begin{proof}
Given an $r>0$, set $x_1:=(0,-r)$ and $x_2:=(0,r)$.
Then $\|x_1\|=\|x_2\|=r$ and $(\pm\ga r^q,0) \in(\Omega_1-x_1)\cap(\Omega_2-x_2)$.
Moreover, it is easy to notice that either $(\ga r^q,0)$ or $(-\ga r^q,0)$ belongs to $(\Omega_1-x_1)\cap(\Omega_2-x_2)$ for any choice of vectors $x_1,x_2\in\R^2$ with $\max\{\|x_1\|,\|x_2\|\}\le r$.
Hence, $(\Omega_1-x_1)\cap(\Omega_2-x_2)\cap B_\rho(\bar{x})\ne\emptyset$ for all such vectors $x_1,x_2\in\R^2$ as long as $\rho>\ga r^q$, and consequently, $\{\Omega_1,\Omega_2\}$ is $\varphi-$semitransversal at~$\bx$.
\qed
\end{proof}

\section{Metric Characterizations}\label{S3}

The three transversality properties are defined in Definition~\ref{D1} geometrically.
We now show that they can be characterized in metric terms.
These metric characterizations can be used as equivalent definitions of the respective properties.
\if{
\NDC{2.1.19
I have merged the three theorems into one to reduce the number of theorems.}
}\fi

\begin{theorem}\label{T4.1}
The collection $\{\Omega_1,\ldots,\Omega_n\}$ is
\begin{enumerate}
\item
$\varphi-$semitransversal at $\bx$ with some $\delta>0$ if and only if
\begin{align}\label{T1-1}
d\left(\bx,\bigcap_{i=1}^n(\Omega_i-x_i)\right) \le\varphi\left(\max_{1\le i\le n}\|x_i\|\right)
\end{align}
for all $x_i\in X$ $(i=1,\ldots,n)$ with $\varphi(\max_{1\le i\le n}\|x_i\|)<\delta$;
\item
$\varphi-$subtransversal at $\bx$ with some $\de_1>0$ and $\de_2>0$ if and only if the following equivalent {conditions hold}:
\begin{enumerate}
\item
for all $x\in B_{\de_2}(\bx)$ with $\varphi\left(\max_{1\le i\le n}d(x,\Omega_i)\right)<\de_1$, it holds
\begin{align}\label{T1-2}
d\left(x,\bigcap_{i=1}^n\Omega_i\right)\le\varphi\left(\max_{1\le i\le n}d(x,\Omega_i)\right);
\end{align}

\item
for all {$x_i\in X$ and $\omega_i\in\Omega_i$
$(i=1,\ldots,n)$ with $\varphi(\max_{1\le i\le n}\|x_i\|)<\de_1$} and ${\omega_1+x_1=\ldots=\omega_n+x_n\in B_{\de_2}(\bx)}$, it holds
\begin{align}\label{T1-5}
d\left(0,\bigcap_{i=1}^n(\Omega_i-\omega_i-x_i)\right)
\le\varphi\left(\max_{1\le i\le n}\|x_i\|\right);
\end{align}
\end{enumerate}

\item
$\varphi-$transversal at $\bx$ with some $\de_1>0$ and $\de_2>0$ if and only if inequality \eqref{T1-5} holds
for all $\omega_i\in \Omega_i\cap B_{\de_2}(\bx)$ and $x_i\in X$ {$(i=1,\ldots,n)$ with
$\varphi(\max_{1\le i\le n}\|x_i\|)<\de_1$.}
\end{enumerate}
\end{theorem}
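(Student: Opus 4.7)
The plan rests on the elementary equivalence that, for any subset $S$ of a metric space, any point $y$, and any $\rho>0$,
\begin{equation*}
S\cap B_\rho(y)\ne\emptyset \quad\Longleftrightarrow\quad d(y,S)<\rho,
\end{equation*}
which converts each ``ball-intersection'' statement in Definition~\ref{D1} into a ``distance'' statement. The arguments for parts (i), (iii), and the equivalence between $\varphi-$subtransversality and condition (a) of part (ii) are structurally identical and can be presented in parallel.

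For the forward direction of part (i), fix the $\delta$ supplied by the definition and take any $x_i$ with $t:=\varphi(\max_{i}\|x_i\|)<\delta$. Applied to any $\rho\in{]}t,\delta[$, the definition together with the displayed equivalence gives $d(\bx,\cap_i(\Omega_i-x_i))<\rho$; letting $\rho\downarrow t$ yields \eqref{T1-1}. Conversely, given \eqref{T1-1} and a $\rho\in{]}0,\delta[$ with $\varphi(\max_i\|x_i\|)<\rho$, the same equivalence returns $\cap_i(\Omega_i-x_i)\cap B_\rho(\bx)\ne\emptyset$. The proof of part (iii) is identical, with $\bx$ replaced by $0$, the set $\cap_i(\Omega_i-x_i)$ replaced by $\cap_i(\Omega_i-\omega_i-x_i)$, and the membership $\omega_i\in\Omega_i\cap B_{\delta_2}(\bx)$ carried along as an extra hypothesis. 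Likewise, the equivalence between $\varphi-$subtransversality and condition (a) of part (ii) follows from the same scheme with $\bx$ replaced by $x$ and the extra restriction $x\in B_{\delta_2}(\bx)$ preserved throughout.

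It remains to establish the equivalence of conditions (a) and (b) in part (ii). For (a)$\Rightarrow$(b), when $\omega_i+x_i=x$ is a common point in $B_{\delta_2}(\bx)$, we have $x_i=x-\omega_i$, hence $\Omega_i-\omega_i-x_i=\Omega_i-x$ and $\cap_i(\Omega_i-\omega_i-x_i)=\bigl(\cap_i\Omega_i\bigr)-x$. This yields $d(0,\cap_i(\Omega_i-\omega_i-x_i))=d(x,\cap_i\Omega_i)$. Moreover, $d(x,\Omega_i)\le\|x-\omega_i\|=\|x_i\|$, so that $\varphi(\max_{i}d(x,\Omega_i))\le\varphi(\max_{i}\|x_i\|)<\delta_1$, and condition (a) delivers precisely the bound required by \eqref{T1-5}.

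The only step that needs a small approximation is (b)$\Rightarrow$(a). Given $x\in B_{\delta_2}(\bx)$ with $\varphi(\max_{i}d(x,\Omega_i))<\delta_1$, for each small $\eps>0$ one chooses $\omega_i\in\Omega_i$ with $\|x-\omega_i\|<d(x,\Omega_i)+\eps$ (the distance infima need not be attained) and sets $x_i:=x-\omega_i$, so that $\omega_i+x_i=x\in B_{\delta_2}(\bx)$ for all $i$. By continuity of $\varphi$ and the strict inequality, $\eps$ can be taken small enough to preserve $\varphi(\max_{i}\|x_i\|)<\delta_1$; then (b) gives $d(x,\cap_i\Omega_i)\le\varphi(\max_{i}\|x_i\|)$, and letting $\eps\downarrow 0$, continuity of $\varphi$ produces \eqref{T1-2}. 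No step poses real difficulty; the argument is essentially bookkeeping built on the displayed equivalence together with the continuity of $\varphi$ needed to absorb the non-attainment of the distance infima in (b)$\Rightarrow$(a).
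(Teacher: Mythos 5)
Your proposal is correct and follows essentially the same route as the paper's proof: the explicit equivalence $S\cap B_\rho(y)\ne\emptyset\Leftrightarrow d(y,S)<\rho$ combined with letting $\rho$ decrease to $\varphi(\max_i\|x_i\|)$ (respectively $\varphi(\max_i d(x,\Omega_i))$) is exactly the paper's argument for parts (i), (iii) and the equivalence of subtransversality with condition (a). Your $\eps$-approximation in (b)$\Rightarrow$(a) is just a reparametrization of the paper's step of taking the infimum over $\omega_i\in\Omega_i$ in the right-hand side, so nothing is missing.
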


\begin{proof}
\begin{enumerate}
\item
Let $\{\Omega_1,\ldots,\Omega_n\}$ be $\varphi-$semitransversal at $\bx$ with some $\delta>0$, and let ${x_i\in X}$ {$(i=1,\ldots,n)$ with $\rho_0:=\varphi(\max_{1\le i\le n}\|x_i\|)<\delta$.}
Choose a $\rho\in]\rho_0,\de[$.
By \eqref{D0-1}, $d\left(\bar{x},\cap_{i=1}^{n}(\Omega_i-x_i)\right)<\rho$.
Letting $\rho\downarrow\rho_0$, we arrive at inequality \eqref{T1-1}.
 \smallskip

Conversely, let $\delta>0$
and inequality \eqref{T1-1} hold for all $x_i\in X$ {$(i=1,\ldots,n)$ with $\varphi(\max_{1\le i\le n}\|x_i\|)<\delta$}.
For all $\rho\in]0,\delta[$ and $x_i\in X$ {$(i=1,\ldots,n)$ with $\varphi(\max_{1\le i\le n}\|x_i\|)$ $<\rho$}, we have
$d\left(\bar{x},\cap_{i=1}^{n}(\Omega_i-x_i)\right)<\rho$, which implies condition \eqref{D0-1}.	
{By Definition~\ref{D1}(i),} $\{\Omega_1,\ldots,\Omega_n\}$ is $\varphi-$semi\-transversal at $\bx$ with $\delta$. \sloppy
\item
We first prove the equivalence between (a) and (b).
Suppose condition (a) is satisfied.
Let $\omega_i\in\Omega_i$ and $x_i\in{X}$
$(i=1,\ldots,n)$ with $\varphi(\max_{1\le i\le n}\|x_i\|)<\de_1$ and $x:=\omega_1+x_1=\ldots=\omega_n+x_n\in B_{\de_2}(\bx)$.
Then
\begin{align*}
\varphi\left(d(x,\Omega_i)\right)=\varphi\left(d(\omega_i+x_i,\Omega_i)\right)
\le\varphi(\|x_i\|)<\de_1
\;\;
(i=1,\ldots,n),
\end{align*}
and consequently, {inequality} \eqref{T1-2} is satisfied.
Hence,
\begin{align*} d\Big(0,\bigcap_{i=1}^n(\Omega_i-\omega_i-x_i)\Big)
&=d\Big(x,\bigcap_{i=1}^n\Omega_i\Big)\\
&\le\varphi\left(\max_{1\le i\le n}d(x,\Omega_i)\right)\le\varphi\Big(\max_{1\le i\le n}\|x_i\|\Big).
\end{align*}
Suppose condition (b) is satisfied.
Let $x\in B_{\de_2}(\bx)$ with ${\varphi\left(\max_{1\le i\le n}d(x,\Omega_i)\right)<\de_1}$.
Choose $\omega_i\in\Omega_i$ {$(i=1,\ldots,n)$ such that $\varphi(\max_{1\le i\le n}\|x-\omega_i\|)<\de_1$} and
set $x'_i:=x-\omega_i$ $(i=1,\ldots,n)$.
Then $x=x'_i+\omega_i\in B_{\de_2}(\bx)$ {$(i=1,\ldots,n)$ and $\varphi(\max_{1\le i\le n}\|x'_i\|)<\de_1$}.
In view of inequality \eqref{T1-5} with $x'_i$ in place of $x_i$ ${(i=1,\ldots,n)}$, we obtain
\begin{align*}
d\Big(x,\bigcap_{i=1}^n\Omega_i\Big)
\le\varphi\Big(\max_{1\le i\le n}\|x-\omega_i\|\Big).
\end{align*}
Taking {infimum} in the \RHS\ over $\omega_i\in\Omega_i$ $(i=1,\ldots,n)$, we arrive at inequality \eqref{T1-2}.

Next we show that $\varphi-$subtransversality is equivalent to condition (a).
Let $\{\Omega_1,\ldots,\Omega_n\}$ be $\varphi-$subtransversal at $\bx$ with some $\de_1>0$ and $\de_2>0$, and let $x\in B_{\de_2}(\bx)$ with {$\rho_0:=\varphi\left(\max_{1\le i\le n}d(x,\Omega_i)\right)<\de_1$.}
Choose a $\rho\in]\rho_0,\de_1[$.
By Definition~\ref{D1}(ii),
$\cap_{i=1}^n\Omega_i\cap B_\rho(x)\ne\es$, and consequently,
$d\left(x,\cap_{i=1}^{n}\Omega_i\right)<\rho$.
Letting $\rho\downarrow\rho_0$, we arrive at inequality \eqref{T1-2}.
Conversely, let $\de_1>0$ and $\de_2>0$, and inequality
\eqref{T1-2} hold for all ${x\in B_{\delta_2}(\bar{x})}$ with {$\varphi( \max_{1\le i\le n}d(x,\Omega_i))<\de_1$}.
For any $\rho\in]0,\delta_1[$ and $x\in B_{\delta_2}(\bar{x})$ with ${\varphi(\max_{1\le i\le n}d(x,\Omega_i))<\rho}$, we have
$d\left(x,\cap_{i=1}^{n}\Omega_i\right)<\rho,$
which implies condition \eqref{D0-2}.
By Definition~\ref{D1}(ii),
$\{\Omega_1,\ldots,\Omega_n\}$ is $\varphi-$sub\-transversal at $\bx$ with $\delta_1$ and $\delta_2$.	
\sloppy
\item
Let $\{\Omega_1,\ldots,\Omega_n\}$ be $\varphi-$transversal at $\bx$ with some $\delta_1>0$ and ${\de_2>0}$, and let $\omega_i\in\Omega_i\cap B_{\de_2}(\bx)$ and $x_i\in X$ {$(i=1,\ldots,n)$ with $\rho_0:=\varphi(\max_{1\le i\le n}\|x_i\|)<\de_1$}.
Choose a $\rho\in]\rho_0,\de_1[$.
{By \eqref{D0-3},}
$d\left(0,\cap_{i=1}^{n}(\Omega_i-\omega_i-x_i)\right)<\rho.$
Letting $\rho\downarrow\rho_0$, we arrive at inequality~\eqref{T1-5}.
\smallskip

Conversely, let $\delta_1>0$ and $\de_2>0$, and inequality \eqref{T1-5} hold for all ${\omega_i\in \Omega_i\cap B_{\de_2}(\bx)}$ and $x_i\in X$ $(i=1,\ldots,n)$ with $\varphi(\max_{1\le i\le n}\|x_i\|)<\de_1$.
For any {$\rho\in]0,\delta_1[$, ${\omega_i\in\Omega_i\cap B_{\de_2}(\bx)}$ and $x_i\in X$ $(i=1,\ldots,n)$ with $\varphi(\max_{1\le i\le n}\|x_i\|)<\rho$}, we have
$d\left(0,\cap_{i=1}^{n}(\Omega_i-\omega_i-x_i)\right)<\rho$,
which is equivalent to condition \eqref{D0-3}.
{By Definition~\ref{D1}(iii),} $\{\Omega_1,\ldots,\Omega_n\}$ is $\varphi-$transversal at $\bx$ with $\delta_1$ and $\de_2$.
\qed\end{enumerate}
\end{proof}	

\begin{example}\label{E3.1}
Let $\R^2$ be equipped with the maximum norm, and let $\Omega_1:=\{(\xi_1,\xi_2)\in\R^2\mid\xi_2\ge 0\}$,
$\Omega_2:=\{(\xi_1,\xi_2)\in\R^2\mid\xi_2\le \xi_1^2\}$ and $\bx:=(0,0)$.
Thus, $\Omega_1\cap\Omega_2=\{(\xi_1,\xi_2)\in\R^2\mid0\le\xi_2\le \xi_1^2\}$, and no shift of the sets can make their intersection empty.
We claim that the pair $\{\Omega_1,\Omega_2\}$ is $\varphi-$semitransversal at $\bx$ with
$\varphi(t):=\sqrt{2t}$ $(t\ge 0)$ and
$\de:=2$.
\end{example}	

\begin{proof}
Observe that, given any $\eps\ge0$, the vertical shifts of the sets determined by $x_{1\eps}:=(0,-\eps)$ and $x_{2\eps}=(0,\eps)$ produce the largest `gap' between them compared to all possible shifts $x_1$ and $x_2$ with $\max\{\|x_1\|,\|x_2\|\}\le\eps$.
Indeed,
\begin{align*}
(\Omega_1-x_{1\eps})\cap (\Omega_2-x_{2\eps}) &=\{(\xi_1,\xi_2)\in\R^2\mid\eps\le\xi_2\le\xi_1^2-\eps\}
\\
&\subset (\Omega_1-x_{1})\cap (\Omega_2-x_{2}),
\end{align*}
as long as $\max\{\|x_1\|,\|x_2\|\}\le\eps$.
Observe also that $(\sqrt{2\eps},\eps)\in(\Omega_1-x_{1\eps})\cap (\Omega_2-x_{2\eps})$.
Hence, for any $x_1,x_2\in\R^2$ with $\eps:=\max\{\|x_1\|,\|x_2\|\}<\varphi\iv(\de)=2$, we have
\begin{align*}
d(\bx,(\Omega_1-x_1)\cap(\Omega_2-x_2)) \le\|(\sqrt{2\eps},\eps)\|=\sqrt{2\eps} =\varphi\left(\max\{\|x_1\|,\|x_2\|\}\right).
\end{align*}
In view of Theorem~\ref{T4.1}(i), $\{\Omega_1,\Omega_2\}$ $\varphi-$semitransversal at $\bx$ with $\delta$.
\qed\end{proof}

\begin{example}\label{E3.2}
Let $\R^2$ be equipped with the maximum norm, and let $\Omega_1:=\{(\xi_1,\xi_2)\in\R^2\mid\xi_2=\xi_1^2\}$,
$\Omega_2:=\{(\xi_1,\xi_2)\in\R^2\mid\xi_2=-\xi_1^2\}$
and $\bx:=(0,0)$.
Thus, $\Omega_1\cap\Omega_2=\{\bx\}$.
We claim that, for any $\ga>1$, the pair $\{\Omega_1,\Omega_2\}$ is $\varphi-$subtransversal at $\bx$ with
$\varphi(t):=\ga\sqrt{t}$ $(t\ge 0)$ and any $\delta_1>0$ and $\de_2>0$ satisfying $\de_2+\frac{1}{2}+\sqrt{\de_2+\frac{1}{4}}<\ga^2$.
\end{example}	

\begin{proof}
Observe that, $d(x,\Omega_1\cap\Omega_2)=\|x\|$ for all $x\in\R^2$ and,
given any $\eps\ge0$ and the corresponding point $x_{\eps}=(0,\eps)$, one has
\begin{align*}
\min_{\|x\|=\eps}\max\{d(x,\Omega_1),d(x,\Omega_2)\} =d(x_\eps,\Omega_1)=d(x_\eps,\Omega_2) =\min_{t\ge0}\max\{\eps-t,t^2\}.
\end{align*}
It is easy to see that the minimum in the rightmost minimization problem is attained at $t:=\sqrt{\eps+\frac{1}{4}}-\frac{1}{2}$ satisfying $\eps-t=t^2$.
Thus,
\begin{align*}
\min_{\|x\|=\eps}\max\{d(x,\Omega_1),d(x,\Omega_2)\} =\eps+\frac{1}{2}-\sqrt{\eps+\frac{1}{4}} =\frac{\eps^2}{\eps+\frac{1}{2}+\sqrt{\eps+\frac{1}{4}}}.
\end{align*}
Hence, for any $x\in\R^2$ with $\|x\|<\de_2$, we have
\begin{align*}
d(x,\Omega_1\cap\Omega_2)=\|x\| \le\frac{\ga\|x\|} {\sqrt{\|x\|+\frac{1}{2}+\sqrt{\|x\|+\frac{1}{4}}}} \le\varphi(\max\{d(x,\Omega_1),d(x,\Omega_2)\}).
\end{align*}
In view of Theorem~\ref{T4.1}(ii), $\{\Omega_1,\Omega_2\}$ is $\varphi-$subtransversal at $\bx$ with $\de_1$ and $\de_2$.
\end{proof}	
\if{
\red{
\begin{remark}
In view of Proposition~\ref{P2.3}(i),
Examples \ref{E3.1} and \ref{E3.2} improve  \cite[Examples~1 and 2]{KruTha14}, respectively.
\end{remark}
}
}\fi

The next statement provides alternative metric characterizations of
$\varphi-$transversa\-lity.
These characterizations differ from the one in {Theorem~\ref{T4.1}(iii)} by values of the parameters $\de_1$ and $\de_2$ and have certain advantages, e.g., when establishing connections with metric regularity of \SVM s.
The relations between the values of the parameters in the two
groups of metric characterizations can be estimated.

\begin{theorem}\label{P3.5}
Let $\de_1>0$ and $\de_2>0$.
The following conditions are equivalent:
\begin{enumerate}
\item
inequality \eqref{T1-5} is satisfied
{for all $x_i\in{X}$ and $\omega_i\in\Omega_i$ with $\omega_{i}+x_i\in B_{\de_2}(\bx)$ ${(i=1,\ldots,n)}$ and $\varphi(\max_{1\le i\le n}\|x_i\|)<\de_1$};
\sloppy

\item
{for all $x_i\in\de_2\B$ $(i=1,\ldots,n)$ with
$\varphi(\max_{1\le i\le n}d(\bx,\Omega_i-x_i))<\de_1$}, it holds
\begin{align}\label{P2.2-1}
d\Big(\bx,\bigcap_{i=1}^n(\Omega_i-x_i)\Big) \le\varphi\Big(\max_{1\le i\le n}d(\bx,\Omega_i-x_i)\Big);
\end{align}

\item
{for all $x,x_i\in{X}$ with $x+x_i\in B_{\de_2}(\bx)$ $(i=1,\ldots,n)$ and $\varphi(\max_{1\le i\le n}d(x,\Omega_i-x_i))<\de_1$}, it holds
\begin{align}\label{P2.2-2}
d\Big(x,\bigcap_{i=1}^n(\Omega_i-x_i)\Big) \le\varphi\Big(\max_{1\le i\le n}d(x,\Omega_i-x_i)\Big).
\end{align}
\end{enumerate}

Moreover, if {$\{\Omega_1,\ldots,\Omega_n\}$} is $\varphi-$transversal at $\bx$ with some $\de_1>0$ and $\de_2>0$, then conditions {\rm (i)--(iii)} hold with any $\de_1'\in ]0,\de_1]$ and $\de_2'>0$ satisfying ${\varphi\iv(\de_1')+\de_2'}\le\de_2$ in place of $\de_1$ and $\de_2$.

Conversely, if conditions {\rm (i)--(iii)} hold with some $\de_1>0$ and $\de_2>0$, then {$\{\Omega_1,\ldots,\Omega_n\}$} is $\varphi-$transversal at $\bx$ with any $\de_1'\in ]0,\de_1]$ and $\de_2'>0$ satisfying ${\varphi\iv(\de_1')+\de_2'}\le\de_2$.
\sloppy
\end{theorem}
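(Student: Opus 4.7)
The strategy is to prove the cycle of implications (iii) $\Rightarrow$ (ii) $\Rightarrow$ (i) $\Rightarrow$ (iii), and then derive the two ``moreover'' assertions by comparing conditions (i)--(iii) with the characterization of $\varphi$-transversality already established in Theorem~\ref{T4.1}(iii). Throughout I will exploit the elementary translation identity $\Omega_i - \omega_i - x_i = (\Omega_i - (\omega_i + x_i - z)) - z$ together with the continuity of $\varphi$ to convert between distances from $0$, from $\bx$, and from an arbitrary nearby point.

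The first two implications are essentially bookkeeping. For (iii) $\Rightarrow$ (ii), one just specializes $x := \bx$, in which case $x + x_i \in B_{\de_2}(\bx)$ becomes $x_i \in \de_2\B$ and \eqref{P2.2-2} reduces to \eqref{P2.2-1}. For (ii) $\Rightarrow$ (i), given $\omega_i \in \Omega_i$ and $x_i \in X$ satisfying the hypotheses of (i), I set $\tilde x_i := \omega_i + x_i - \bx$; then $\tilde x_i \in \de_2 \B$, and since $-x_i \in \Omega_i - \omega_i - x_i$, I get $d(\bx, \Omega_i - \tilde x_i) = d(0, \Omega_i - \omega_i - x_i) \le \|x_i\|$, so the hypothesis of (ii) holds. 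The conclusion \eqref{P2.2-1} for $\tilde x_i$ translates back to \eqref{T1-5} upon shifting by $\bx$.

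The main step is (i) $\Rightarrow$ (iii), which needs an approximate-projection argument. Given $x$ and $x_i$ as in (iii), for each small $\eta > 0$ I pick $\omega_i \in \Omega_i$ with $\|\omega_i - (x + x_i)\| < d(x, \Omega_i - x_i) + \eta$ and set $x_i' := x + x_i - \omega_i$. Then $\omega_i + x_i' = x + x_i \in B_{\de_2}(\bx)$, and $\max_i \|x_i'\| < \max_i d(x, \Omega_i - x_i) + \eta$, so continuity of $\varphi$ gives $\varphi(\max_i \|x_i'\|) < \de_1$ provided $\eta$ is small. Applying (i) to $(\omega_i, x_i')$ and using $\Omega_i - \omega_i - x_i' = \Omega_i - x - x_i$, one obtains $d(x, \cap_{i=1}^n(\Omega_i - x_i)) \le \varphi(\max_i \|x + x_i - \omega_i\|)$; letting $\eta \downarrow 0$ and invoking continuity of $\varphi$ once more yields \eqref{P2.2-2}. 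This is the step I expect to be the most delicate, since the choice of the parameters in (i) depends on the freshly produced $\omega_i$, and the bound $\varphi(\max \|x_i'\|) < \de_1$ has to be verified after the approximation is committed to.

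For the two ``moreover'' assertions, Theorem~\ref{T4.1}(iii) characterizes $\varphi$-transversality with parameters $\de_1, \de_2$ as \eqref{T1-5} holding for all $\omega_i \in \Omega_i \cap B_{\de_2}(\bx)$ and all $x_i$ with $\varphi(\max_i \|x_i\|) < \de_1$, whereas condition (i) replaces the constraint on $\omega_i$ by one on $\omega_i + x_i$. The triangle inequality
\[
\|\omega_i - \bx\| \le \|\omega_i + x_i - \bx\| + \|x_i\|
\]
combined with the assumed parameter relation $\varphi\iv(\de_1') + \de_2' \le \de_2$ converts a constraint of one type into one of the other: if $\|\omega_i + x_i - \bx\| < \de_2'$ and $\varphi(\max \|x_i\|) < \de_1'$, then $\|\omega_i - \bx\| < \de_2' + \varphi\iv(\de_1') \le \de_2$ (and symmetrically). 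Combined with the already-proved equivalence (i) $\Leftrightarrow$ (ii) $\Leftrightarrow$ (iii), this gives both the forward and converse ``moreover'' statements.
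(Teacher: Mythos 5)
Your proposal is correct and follows essentially the same route as the paper: the paper proves the cycle (i) $\Rightarrow$ (ii) $\Rightarrow$ (iii) $\Rightarrow$ (i) while you traverse it in the opposite direction, but the ingredients --- the translation substitutions, the single approximate-projection step needed to pass from the pointwise condition (i) to a distance condition (placed by the paper in (i) $\Rightarrow$ (ii) and by you in (i) $\Rightarrow$ (iii)), and the observation that $-x_i\in\Omega_i-\omega_i-x_i$ gives $d(0,\Omega_i-\omega_i-x_i)\le\|x_i\|$ --- are identical. Your treatment of the two ``moreover'' assertions via Theorem~\ref{T4.1}(iii) and the triangle inequality also matches the paper's argument.
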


\begin{proof}
We first prove the equivalence of conditions (i)--(iii).

{(i) \folgt (ii)}.
Let $x_i\in\de_2\B$ $(i=1,\ldots,n)$ with $\varphi(\max_{1\le i\le n}d(\bx,\Omega_i-x_i))<\de_1$.
Choose $\omega_{i}\in\Omega_i$ $(i=1,\ldots,n)$ such that $\varphi(\max_{1\le i\le n}\|\bx+x_i-\omega_i\|)<\de_1$.
Set $x'_i:=\bx+x_i-\omega_i$ $(i=1,\ldots,n)$.
Then $\omega_i+x'_i\in B_{\de_2}(\bx)$ $(i=1,\ldots,n)$ and $\varphi(\max_{1\le i\le n}\|x'_i\|)<\de_1$.
By (i), inequality \eqref{T1-5} is satisfied with $x'_i$ in place of $x_i$ $(i=1,\ldots,n)$, i.e.
\begin{align*}
d\Big(\bx,\bigcap_{i=1}^n(\Omega_i-x_i)\Big) \le\varphi\Big(\max_{1\le i\le n}\|\bx+x_i-\omega_i\|\Big).
\end{align*}
Taking the infimum in the righ-hand side over $\omega_i\in\Omega_i$ $(i=1,\ldots,n)$, we
arrive at inequality \eqref{P2.2-1}.

(ii) \folgt (iii).
Let $x,x_i\in X$ with $x+x_i\in B_{\de_2}(\bx)$ $(i=1,\ldots,n)$ and $\varphi(\max_{1\le i\le n}d(x,\Omega_i-x_i))<\de_1$.
Set $x'_i:=x+x_i-\bx$ $(i=1,\ldots,n)$.
Then $x'_i\in\de_2\B$ $(i=1,\ldots,n)$ and
{$\varphi(\max_{1\le i\le n}d(\bx,\Omega_i-x'_i))<\de_1$}.
By (ii), inequality \eqref{P2.2-1} is satisfied with $x'_i$ in place of $x_i$ $(i=1,\ldots,n)$.
This is equivalent to inequality \eqref{P2.2-2}.

{(iii) \folgt (i)}.
Let $x_i\in{X}$ and $\omega_i\in\Omega_i$ with $\omega_i+x_i\in B_{\de_2}(\bx)$ $(i=1,\ldots,n)$ and $\varphi(\max_{1\le i\le n}\|x_i\|)<\de_1$.
Set $x'_i:=\omega_i+x_i-\bx$
	$(i=1,\ldots,n)$.
Then, $\bx+x'_i\in B_{\de_2}(\bx)$ and
$\varphi(\max_{1\le i\le n}d(\bx,\Omega_i-x'_i))\le\varphi(\|x_i\|)<\de_1$.
By (iii), inequality \eqref{P2.2-2} is satisfied with $\bx$ and $x'_i$ in place of $x$ and $x_i$ $(i=1,\ldots,n)$, respectively,
i.e.
\begin{align*}
d\Big(0,\bigcap_{i=1}^n(\Omega_i-\omega_i-x_i)\Big) \le\varphi\Big(\max_{1\le i\le n}d(0,\Omega_i-\omega_i-x_i)\Big).
\end{align*}
Since $\omega_i\in\Omega_i$ $(i=1,\ldots,n)$,
inequality \eqref{T1-5} is satisfied.
		
Suppose $\{\Omega_1,\ldots,\Omega_{n} \}$ is $\varphi-$transversal at $\bx$ with some $\de_1>0$ and $\de_2>0$, and let $\de_1'\in ]0,\de_1]$ and $\de_2'>0$ be such that $\varphi\iv(\de_1')+\de_2'\le \de_2$.
Then, for all $x_i\in X$ and $\omega_i \in\Omega_i$ with $\omega_{i}+x_i\in B_{\de'_2}(\bx)$ $(i=1,\ldots,n)$ and $\varphi(\max_{1\le i\le n}\|x_i\|)<\de_1'$, we have $\|{\omega_{i}-\bx}\|\le\|x_i\|+ \|\omega_{i}+x_i-\bx\| <\varphi\iv(\de_1')+\de_2'\le\de_2$ ($i=1,\ldots,n$).
By {Theorem~\ref{T4.1}(iii)}, inequality \eqref{T1-5} is satisfied, and consequently, condition (i) (as well as conditions (ii) and (iii)) holds with $\de_1'$ and $\de_2'$.

Conversely, suppose conditions {\rm (i)--(iii)} hold with some $\de_1>0$ and $\de_2>0$, and let $\de_1'\in ]0,\de_1]$ and $\de_2'>0$ be such that $\varphi\iv(\de_1')+\de_2'\le\de_2$.
Then, for all ${\omega_i\in\Omega_i \cap B_{\delta'_2}(\bar{x})}$ and $x_i\in X$ $(i=1,\ldots,n)$ with $\varphi(\max_{1\le i\le n}\|x_i\|)<\de_1'$, we have $\|\omega_{i}+x_i-\bx\|\le\|x_i\|+\|\omega_i-\bx\| <\varphi\iv(\de_1')+\de_2'\le\de_2$ $(i=1,\ldots,n)$.
By (i), inequality \eqref{T1-5} is satisfied, and consequently, $\{\Omega_1,\ldots,\Omega_n\}$ is $\varphi-$transversal at $\bx$ with $\de_1'$ and $\de_2'$ according to {Theorem~\ref{T4.1}(iii)}.
\qed\end{proof}	

\begin{remark}
\begin{enumerate}
\item
In the H\"older case, i.e. when $\varphi(t):=\al\iv t^q$ ($t\ge 0$) for some $\al>0$ and $q\in]0,1]$,
condition \eqref{P2.2-2} served as the main metric characterization of transversality; cf. \cite{KruTha15,KruTha14}.
In the linear case, condition \eqref{P2.2-1} has been picked up recently in \cite{BuiKru19,BuiCuoKru}.
This condition seems an important advancement as it replaces an arbitrary point $x$ in \eqref{P2.2-2} with the given reference point $\bx$.
Condition \eqref{T1-5} in part (i) seems new.
In view of {Theorem~\ref{T4.1}(iii)}, it is the most straightforward metric counterpart of the original geometric property \eqref{D0-3}.

\item
The metric characterizations of the three $\varphi-$transversality properties in the above theorems look similar: each of them provides an upper error bound type estimate for the distance from a point to the intersection of sets, which can be useful from the computational point of view.
For the account of nonlinear error bounds theory, we refer the reader to \cite{AzeCor17,AzeCor14,CorMot08,YaoZhe16}.
\end{enumerate}
\end{remark}

{The next corollary provides} qualitative metric characterizations of the three nonlinear trans\-versality properties.
They are direct consequences of {Theorems~\ref{T4.1} and~\ref{P3.5}.}

\begin{corollary}\label{C4.1}
The collection $\{\Omega_1,\ldots,\Omega_n\}$ is
\begin{enumerate}
\item
$\varphi-$semitransversal at $\bx$ if and only if
there exists a $\delta>0$ such that inequality \eqref{T1-1} holds for all $x_i \in\delta\B$ $(i=1,\ldots,n)$;

\item
$\varphi-$subtransversal at $\bx$
if and only if the following equivalent conditions {hold}:
\begin{enumerate}

\item
there exists a $\delta>0$ such that inequality \eqref{T1-2} holds for all $x\in B_{\delta}(\bar{x})$;

\item
there exists a $\delta>0$ such that inequality \eqref{T1-5} holds for all $\omega_i\in\Omega_i\cap B_{\de}(\bx)$ and $x_i\in\de\B$ $(i=1,\ldots,n)$ with $\omega_1+x_1=\ldots=\omega_n+x_n$;
\end{enumerate}

\item
$\varphi-$transversal at $\bx$ if and only if the following equivalent conditions {hold}:
\begin{enumerate}

\item	
there exists a $\delta>0$ such that inequality \eqref{T1-5} holds for all $\omega_i\in\Omega_i\cap B_\de(\bx)$ and $x_i\in\delta\B$ $(i=1,\ldots,n)$;

\item
there exists a $\delta>0$ such that inequality \eqref{P2.2-1} holds for all $x_i\in\delta\B$ $(i=1,\ldots,n)$;
\item
there exists a $\delta>0$ such that inequality \eqref{P2.2-2} holds for all $x\in B_\de(\bx)$ and $x_i\in\delta\B$ $(i=1,\ldots,n)$.
\end{enumerate}
\end{enumerate}
\end{corollary}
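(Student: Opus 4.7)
The plan is to deduce each equivalence in Corollary~\ref{C4.1} directly from Theorem~\ref{T4.1} (for parts (i), (ii), and (iii)(a)) and Theorem~\ref{P3.5} (for parts (iii)(b) and (iii)(c)) by collapsing the two quantitative parameters $\de_1,\de_2$, together with the auxiliary ``$\varphi(\max\cdots)<\de_1$'' qualification, into a single parameter $\de$. The only tool used throughout is that $\varphi\in\mathcal{C}$ is a strictly increasing continuous bijection of $\R_+$ onto itself with $\varphi(0)=0$, so both $\varphi(\de)$ and $\varphi^{-1}(\de)$ tend to $0$ as $\de\downarrow0$, and any inequality of the form $\varphi(t)<\de_1$ is equivalent to $t<\varphi^{-1}(\de_1)$.

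For part (i), the forward implication follows by restricting the quantitative condition of Theorem~\ref{T4.1}(i) from $\{(x_1,\ldots,x_n):\varphi(\max_i\|x_i\|)<\de\}$ to its subset $(\varphi^{-1}(\de)\B)^n$, while the converse is obtained by observing that if \eqref{T1-1} holds on $(\de\B)^n$, then it holds on $\{(x_1,\ldots,x_n):\varphi(\max_i\|x_i\|)<\varphi(\de)\}$. For parts (ii)(a) and (ii)(b), the key observation is that $\bx\in\Omega_i$ forces $d(x,\Omega_i)\le\|x-\bx\|$, so for $x\in B_\de(\bx)$ one automatically has $\varphi(\max_i d(x,\Omega_i))<\varphi(\de)$; choosing $\de\le\min\{\de_2,\varphi^{-1}(\de_1)\}$ in Theorem~\ref{T4.1}(ii)(a) therefore turns the $\varphi$-qualification into a tautology, while the converse direction sets $\de_1:=\varphi(\de)$ and $\de_2:=\de$. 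Part (ii)(b) uses, in addition, the inclusion $\omega_i+x_i\in B_{2\de}(\bx)$ obtained from $\omega_i\in B_\de(\bx)$ and $x_i\in\de\B$.

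Parts (iii)(a), (iii)(b), and (iii)(c) are handled by the same absorption applied respectively to Theorem~\ref{T4.1}(iii), Theorem~\ref{P3.5}(ii), and Theorem~\ref{P3.5}(iii). In each case it suffices to pick $\de$ small enough compared to $\de_2$ and $\varphi^{-1}(\de_1)$ so that every quantitative hypothesis of the ambient theorem becomes an automatic consequence of $x_i\in\de\B$ and the positional constraint on $x$, $\omega_i$, or $\omega_i+x_i$. Concretely, $\omega_i\in B_\de(\bx)$ gives $\omega_i\in B_{\de_2}(\bx)$ for (iii)(a); the bound $d(\bx,\Omega_i-x_i)\le\|x_i\|<\de$ (from $\bx-x_i\in\Omega_i-x_i$) supplies the $\varphi$-qualification for (iii)(b); and the triangle-inequality estimate $\|x+x_i-\bx\|+d(x,\Omega_i-x_i)<2\de$ covers both qualifications for (iii)(c). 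The whole argument is pure bookkeeping and presents no genuine obstacle; its mathematical content sits entirely in the earlier Theorems~\ref{T4.1} and~\ref{P3.5}, and the corollary is a clean qualitative restatement whose only novelty is the parameter-unification.
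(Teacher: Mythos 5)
Your proposal is correct and is exactly the route the paper intends: the paper gives no separate proof of Corollary~\ref{C4.1}, stating only that it is a direct consequence of Theorems~\ref{T4.1} and~\ref{P3.5}, and your parameter bookkeeping (absorbing $\de_1$, $\de_2$ and the $\varphi$-qualifications into a single $\de$ using $\bx\in\Omega_i$ and the monotonicity of $\varphi$) is the natural way to fill that in. Two minor points. In (iii)(c) the estimate should say that each of $\|x+x_i-\bx\|$ and $d(x,\Omega_i-x_i)$ is less than $2\de$ (their sum is only bounded by $4\de$); this is cosmetic. More substantively, the converse of (iii)(c) is not obtained by ``the same absorption'' into Theorem~\ref{P3.5}(iii), since that condition quantifies over all $x$ with $x+x_i\in B_{\de_2}(\bx)$, and such $x$ need not lie near $\bx$; the immediate fix is to specialize (iii)(c) to $x=\bx$, obtaining (iii)(b), which gives Theorem~\ref{P3.5}(ii) with $\de_2:=\de$ and hence $\varphi-$transversality by the final assertion of Theorem~\ref{P3.5}.
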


\begin{remark}
In the H\"older setting, i.e. when
$\varphi(t):=\alpha^{-1} t^q$ $(t\ge0)$ with some ${\alpha>0}$ and $q>0$, the above corollary improves \cite[Theorem~1]{KruTha14}.
In the linear case, the equivalence of the three characterizations of transversality in {Corollary~\ref{C4.1}(iii)} has been established in \cite{BuiCuoKru}.
We refer the readers to \cite{KruLukTha17,KruLukTha18,Kru18} for more discussions and historical comments.
\end{remark}	

The next two propositions identify important situations when `restricted' versions of the metric characterizations of nonlinear transversality properties in
{Theorem~\ref{T4.1}} can be used: with all but one sets being translated in the cases of $\varphi-$semi\-transversality and $\varphi-$transversality, and with the point $x$ restricted to one of the sets in the case of $\varphi-$subtransversality.
The latter restricted version is of importance, for instance, when dealing with alternating (or cyclic) projections.
The first proposition formulates simplified necessary conditions for the transversality properties which are direct consequences of the respective statements, while the second one gives conditions under which these conditions become sufficient in the case of two sets.

\begin{proposition}
\begin{enumerate}
\item
If $\{\Omega_1,\ldots,\Omega_n\}$ is $\varphi-$semitransversal at $\bx$ with some $\de>0$, then
\begin{gather*}
d\Big(\bx,\bigcap_{i=1}^{n-1}(\Omega_i-x_i)\cap\Omega_n\Big) \le \varphi\Big(\max_{1\le i\le n-1}\|x_i\|\Big)
\end{gather*}
{for all $x_i\in X$ $(i=1,\ldots,n-1)$ with $\varphi(\max_{1\le i\le n-1}\|x_i\|)<\delta$.}

\item
If $\{\Omega_1,\ldots,\Omega_n\}$ is $\varphi-$subtransversal at $\bx$ with some $\de_1>0$ and $\de_2>0$, then
\begin{gather*}
d\Big(x,\bigcap_{i=1}^{n} \Omega_i\Big)\le{\varphi\Big(\max_{1\le i\le n-1}d(x,\Omega_i)\Big)}
\end{gather*}
for all $x\in\Omega_n\cap B_{\delta_2}(\bar{x})$ with ${\varphi(\max_{1\le i\le n-1}d(x,\Omega_i))}<\de_1$.

\item
If $\{\Omega_1,\ldots,\Omega_n\}$ is $\varphi-$transversal at $\bx$ with some $\de_1>0$ and $\de_2>0$, then
\begin{align*}
d\Big(0,\bigcap_{i=1}^{n-1}(\Omega_i-\omega_i-x_i) \cap(\Omega_n-\omega_n)\Big)
\le\varphi\Big(\max_{1\le i\le n-1}\|x_i\|\Big)
\end{align*}
for all {$\omega_i\in\Omega_i\cap B_{\de_2}(\bx)$ $(i=1,\ldots,n)$ and
$x_i\in X$ $(i=1,\ldots,n-1)$ with $\varphi(\max_{1\le i\le n-1}\|x_i\|)<\de_1$.}
\end{enumerate}
\end{proposition}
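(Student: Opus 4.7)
The plan is to obtain each of the three assertions as an immediate specialization of the corresponding part of Theorem~\ref{T4.1}. The key observation is that, in each of the three metric characterizations established there, the right-hand side involves a maximum over the indices $1,\ldots,n$, so forcing the $n$-th term in the maximum to vanish recovers a maximum over $1,\ldots,n-1$; simultaneously the left-hand side specializes to the expression in the conclusion we want.

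For part (i), I would apply Theorem~\ref{T4.1}(i) with the choice $x_n:=0$. Then $\Omega_n-x_n=\Omega_n$ and $\max_{1\le i\le n}\|x_i\|=\max_{1\le i\le n-1}\|x_i\|$, so the hypothesis $\varphi(\max_{1\le i\le n-1}\|x_i\|)<\delta$ coincides with that of Theorem~\ref{T4.1}(i), and inequality \eqref{T1-1} reduces exactly to the stated estimate.

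For part (ii), I would apply Theorem~\ref{T4.1}(ii)(a). Restricting $x$ to lie in $\Omega_n\cap B_{\de_2}(\bx)$ forces $d(x,\Omega_n)=0$, whence $\max_{1\le i\le n}d(x,\Omega_i)=\max_{1\le i\le n-1}d(x,\Omega_i)$; the hypothesis and conclusion of the theorem then give precisely the stated inequality.

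For part (iii), I would apply Theorem~\ref{T4.1}(iii) with $x_n:=0$. For any $\omega_n\in\Omega_n\cap B_{\de_2}(\bx)$ and any $\omega_i\in\Omega_i\cap B_{\de_2}(\bx)$, $x_i\in X$ $(i=1,\ldots,n-1)$ satisfying the stated hypothesis, one has $\Omega_n-\omega_n-x_n=\Omega_n-\omega_n$ and the maximum over $n$ terms collapses to the maximum over $n-1$ terms, so inequality \eqref{T1-5} yields the stated bound. No obstacle is anticipated: the entire argument is bookkeeping aimed at reducing each metric characterization of Theorem~\ref{T4.1} to the restricted form by neutralizing the $n$-th coordinate.
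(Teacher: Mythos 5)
Your proposal is correct and is exactly the argument the paper intends: the text introduces this proposition as a ``direct consequence of the respective statements,'' i.e.\ of Theorem~\ref{T4.1}, obtained by setting $x_n:=0$ in parts (i) and (iii) and restricting $x$ to $\Omega_n$ (so that $d(x,\Omega_n)=0$) in part (ii). The bookkeeping with the collapsing maxima is right, so nothing is missing.
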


\begin{proposition}\label{P6}
Let $\Omega_1,\Omega_2$ be subsets of a normed space {$X$,} and $\bx\in\Omega_1\cap\Omega_2$.
Let $\al>0$, $\bar t>0$,
$\varphi(t)\le\al t$ for all $t\in]0,\bar t\,]$,
and ${\al':=(1+2\al)\iv}$.
\begin{enumerate}
\item
If
for all $x\in\bar t\,\B$,
\begin{gather}\label{P4.2-1}
d\left(\bx,(\Omega_1-x)\cap\Omega_2\right) \le\varphi(\|x\|),
\end{gather}
then $\{\Omega_1,\Omega_2\}$ is $\al'-$semitransversal at $\bx$ with $\de:=\left(\al+\frac{1}{2}\right)\bar t$.

\item
If there exists a $\de_2>0$ such that,
for all $x\in\Omega_2\cap B_{2\delta_2}(\bar{x})$ with $d(x,\Omega_1)<\bar t$,
\begin{gather}\label{P4.2-2}
d\left(x,\Omega_1\cap\Omega_2\right)\le\varphi(d(x,\Omega_1)),
\end{gather}
then $\{\Omega_1,\Omega_2\}$ is $\al'-$subtransversal at $\bx$ with $\de_1:=\left(\al+\frac{1}{2}\right)\bar t$ and {$\de_2$}.
\item
If there exists a $\de_2>0$ such that,
for all $\omega_i\in\Omega_i\cap B_{\de_2}(\bx)$ $(i=1,2)$ and $x\in\bar t\,\B$,
\begin{gather*}
d\left(0,(\Omega_1-\omega_1-x)\cap(\Omega_2-\omega_2)\right)\le\varphi(\|x\|),
\end{gather*}
then $\{\Omega_1,\Omega_2\}$ is $\al'-$transversal at $\bx$ with ${\de_1}:=\left(\al+\frac{1}{2}\right)\bar t$ and $\de_2$.
\end{enumerate}
\end{proposition}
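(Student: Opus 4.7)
The three parts share a common strategy: reduce the bilateral translations (two parameters $x_1, x_2$) appearing in Definition~\ref{D0} to the unilateral form of the hypothesis via a symmetric splitting $x := x_1 - x_2$ (or, in part~(ii), a projection onto $\Omega_2$), then recover the required strict estimate from the algebraic identities $(1+2\alpha)\alpha' = 1$ and $2(\alpha + 1/2)\alpha' = 1$ baked into the choice of $\alpha'$ and $\delta_1$.

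For parts (i) and (iii), given admissible $x_1, x_2 \in X$ with $\max\{\|x_1\|, \|x_2\|\} < \alpha'\rho$ (and $\omega_i \in \Omega_i \cap B_{\delta_2}(\bx)$ in~(iii)), set $x := x_1 - x_2$ and exploit the set identity
\[
(\Omega_1 - \omega_1 - x_1) \cap (\Omega_2 - \omega_2 - x_2) = \bigl((\Omega_1 - \omega_1 - x) \cap (\Omega_2 - \omega_2)\bigr) - x_2
\]
(taking $\omega_1 = \omega_2 = \bx$ in part~(i)). The constraint $\rho < \delta_1 = (\alpha + 1/2)\bar t$ forces $\|x\| \le \|x_1\| + \|x_2\| < 2\alpha'\rho < \bar t$, so $x \in \bar t\,\B$ and the hypothesis applies, giving the $\varphi(\|x\|) \le \alpha\|x\|$ bound on the distance for the restricted intersection. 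A triangle inequality transfers this (at the cost of an additive $\|x_2\|$) to an estimate on $d(\bx, \cdot)$ in part~(i) or $d(0, \cdot)$ in part~(iii) for the full intersection; the resulting quantity satisfies $\alpha\|x\| + \|x_2\| \le \alpha\|x_1\| + (1+\alpha)\|x_2\| \le (1 + 2\alpha)\max_i\|x_i\| < \rho$, landing strictly inside the required open $\rho$-ball.

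For part (ii), the analogous reduction is a single projection: given $x \in B_{\delta_2}(\bx)$ with $\max\{d(x, \Omega_1), d(x, \Omega_2)\} < \alpha'\rho$, pick $\omega_2 \in \Omega_2$ with $\|\omega_2 - x\| < \alpha'\rho$. One then checks $\omega_2 \in \Omega_2 \cap B_{2\delta_2}(\bx)$ (using $\|x - \bx\| < \delta_2$ together with $\alpha'\rho < \bar t/2$) and $d(\omega_2, \Omega_1) \le \|\omega_2 - x\| + d(x, \Omega_1) < 2\alpha'\rho < \bar t$, which places $\omega_2$ in the domain of the hypothesis. A single triangle step,
\[
d(x, \Omega_1 \cap \Omega_2) \le \|x - \omega_2\| + d(\omega_2, \Omega_1 \cap \Omega_2) \le \|x - \omega_2\| + \alpha\, d(\omega_2, \Omega_1),
\]
together with the previous bounds, compresses to $d(x, \Omega_1 \cap \Omega_2) < \alpha'\rho + 2\alpha\alpha'\rho = \rho$, and the metric characterization in Theorem~\ref{T4.1}(ii) then delivers $\alpha'$-subtransversality with parameters $\delta_1$ and $\delta_2$.

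The main delicate point is the careful book-keeping needed to preserve strict inequalities through two nested triangle-style estimates (so the final conclusion lands inside an \emph{open} $\rho$-ball rather than its closure) and to ensure that every auxiliary point falls within the region where the hypothesis is applicable. The precisely tuned values $\alpha' = (1+2\alpha)^{-1}$ and $\delta_1 = (\alpha + 1/2)\bar t$ are exactly what make the final estimates collapse to $\rho$ with no slack.
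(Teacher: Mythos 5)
Parts (i) and (iii) of your argument are correct and coincide with the paper's proof: the splitting $x:=x_1-x_2$, the translation identity, the bound $\|x\|\le\|x_1\|+\|x_2\|<2\al'\rho<2\al'\de=\bar t$, and the final estimate $\al\|x\|+\|x_2\|\le(1+2\al)\max\{\|x_1\|,\|x_2\|\}<\rho$ are exactly the steps used there (part (iii) being part (i) applied to $\Omega_1-\omega_1$ and $\Omega_2-\omega_2$).

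In part (ii), however, the step placing $\omega_2$ in $B_{2\de_2}(\bx)$ does not go through as you state it. From $\|\omega_2-x\|<\al'\rho$ and $\|x-\bx\|<\de_2$ you only get $\|\omega_2-\bx\|<\al'\rho+\de_2$, and since $\al'\rho$ can be arbitrarily close to $\al'\de_1=\bar t/2$, this is $<2\de_2$ only when $\bar t\le2\de_2$ --- a relation that is nowhere assumed; your parenthetical ``$\al'\rho<\bar t/2$'' does not connect $\al'\rho$ to $\de_2$ at all. The missing idea is that $\bx\in\Omega_2$, hence $d(x,\Omega_2)\le\|x-\bx\|<\de_2$, so $\omega_2$ must be chosen as an \emph{almost-nearest} point of $\Omega_2$ to $x$: take $\ga>1$ with $\|x-\bx\|<\ga\iv\de_2$ and $\max\{d(x,\Omega_1),d(x,\Omega_2)\}<\ga\iv\al'\rho$ (possible by strictness of the hypotheses), and $\omega_2\in\Omega_2$ with $\|\omega_2-x\|\le\ga\, d(x,\Omega_2)\le\ga\|x-\bx\|$; then $\|\omega_2-\bx\|\le(\ga+1)\|x-\bx\|<(1+\ga\iv)\de_2<2\de_2$ and $d(\omega_2,\Omega_1)<(1+\ga\iv)\al'\rho<\bar t$, and letting $\ga\downarrow1$ at the end recovers the sharp constant $(1+2\al)$. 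This is precisely the device the paper uses; with it, the rest of your part (ii) (the telescoping to $(1+2\al)\al'\rho=\rho$ and the appeal to the metric characterization) is sound.
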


\begin{proof}
\begin{enumerate}
\item
Let $\de:=\left(\al+\frac{1}{2}\right)\bar t$, and {inequality} \eqref{P4.2-1} be satisfied for all $x\in\bar t\,\B$.
Let $\rho\in]0,\delta[$ {and $x_1,x_2\in X$ with} ${\max\{\|x_1\|,\|x_2\|\}<{\al'}\rho}$.
Set $x':=x_1-x_2$.
Thus,
$\|x'\|\le 2\max\{\|x_1\|,\|x_2\|\}<2{\al'}\de=\bar t$.
Hence, by \eqref{P4.2-1} with $x'$ in place of $x$,
\sloppy
\begin{align*}
d(\bx,(\Omega_1-x_1)\cap(\Omega_2-x_2))
&\le \|x_2\|+d(\bx-x_2,(\Omega_1-x_1)\cap(\Omega_2-x_2))\\
&=\|x_2\|+d(\bx,(\Omega_1-x')\cap\Omega_2)\\
&\le\|x_2\|+\varphi(\|x'\|)\le \|x_2\|+\al\|x'\|\\
&\le (1+2\al)\max\{\|x_1\|,\|x_2\|\}<\rho.
\end{align*}
Hence, $(\Omega_1-x_1)\cap(\Omega_2-x_2)\cap B_\rho(\bx){\ne\es}$ and, by Definition~\ref{D0}(i),
$\{\Omega_1,\Omega_2\}$ is
$\al'-$semitransversal at $\bx$ with $\de$.

\item
Let $\de_1:=\left(\al+\frac{1}{2}\right)\bar t$, $\de_2>0$, and
{inequality} \eqref{P4.2-2} be satisfied
for all $x\in\Omega_2\cap B_{2\delta_2}(\bar{x})$ with $d(x,\Omega_1)<\bar t$.
Let $\rho\in]0,\de_1[$ and ${x\in B_{\de_2}(\bx)}$ with $\max\{d(x,\Omega_1), d(x,\Omega_2)\}<\al'\rho$.
Choose a number $\ga>1$ such that
$$\|x-\bx\|<\ga\iv\de_2\AND \max\{d(x,\Omega_1),d(x,\Omega_2)\} <\ga\iv\al'\rho,$$
and a point $x'\in\Omega_2$ such that $\|x-x'\|\le\gamma d(x,\Omega_2).$
Then
\begin{align*}
\|x'-\bx\|&\le\|x-x'\|+\|x-\bx\|\le\gamma d(x,\Omega_2)+\|x-\bx\|
\\
&\le(\gamma+1)\|x-\bx\|<(1+\ga\iv)\de_2<2\delta_2,\\
d(x',\Omega_1)&\le\|x-x'\|+d(x,\Omega_1) \le(\ga+1)\max\{d(x,\Omega_1),d(x,\Omega_2)\}\\
&
<(1+\ga\iv)\al'\de_1<2\al'\de_1=\bar t.
\end{align*}
Hence, by \eqref{P4.2-2} with $x'$ in place of $x$,
\begin{align*}
d(x,\Omega_1\cap\Omega_2)
&\le\|x-x'\|+d(x',\Omega_1\cap\Omega_2)
\le\|x-x'\|+\varphi(d(x',\Omega_1))\\
&\le\|x-x'\| +\al d(x',\Omega_1)
\le(1+\al)\|x-x'\|+\al d(x,\Omega_1)\\
&\le(1+\al)\gamma d(x,\Omega_2)+\al d(x,\Omega_1)\\
&\le((1+\al)\gamma+\al)\max\{d(x,\Omega_1),d(x,\Omega_2)\}.
\end{align*}
Letting $\ga\downarrow1$, we arrive at
\begin{gather*}
d(x,\Omega_1\cap\Omega_2)\le
(1+2\al)\max\{d(x,\Omega_1),d(x,\Omega_2)\}<\rho.
\end{gather*}
Hence, $\Omega_1\cap\Omega_2\cap B_\rho(x){\ne\es}$ and, by Definition~\ref{D0}(ii),
$\{\Omega_1,\Omega_2\}$ is
$\al'-$sub\-transversal at $\bx$ with $\de_1$ and $\de_2$.

\item
The proof follows that of assertion (i) with the sets $\Omega_1-\omega_1$ and $\Omega_2-\omega_2$ in place of $\Omega_1$ and $\Omega_2$, respectively.
\qed\end{enumerate}
\end{proof}

\begin{remark}	
\begin{enumerate}
\item
In the linear case, Proposition~\ref{P6}(ii) recaptures \cite[Theorem~1(iii)]{KruLukTha18}, while
parts (i) and (iii) seem new.
\item
Restricted versions of the metric conditions in Theorem~\ref{P3.5} can be produced in a similar way.
\end{enumerate}
\end{remark}	

Checking the metric estimates of the $\varphi-$subtransversality and $\varphi-$transversality can be simplified as illustrated by the following proposition referring to condition \eqref{T1-2} in {Theorem~\ref{T4.1}(ii)}.
Equivalent versions of conditions \eqref{P2.2-1} and \eqref{P2.2-2} in Theorem~\ref{P3.5} look similar.
\begin{proposition}\label{P5}
The following conditions are equivalent:
\begin{enumerate}
\item
inequality \eqref{T1-2} holds true;
\item
for all $\omega_i\in\Omega_i$ $(i=1,\ldots,n)$, it holds
\begin{equation}\label{P3-1}
d\Big(x,\bigcap_{i=1}^n\Omega_i\Big)\le \varphi\Big(\max_{1\le i\le n}\|x-\omega_i\|\Big);
\end{equation}
\item
inequality \eqref{P3-1} holds true for all $\omega_i\in\Omega_i$ with $\|\omega_i-\bx\|<\|x-\bx\|+\varphi\iv(\|x-\bx\|)$ $(i=1,\ldots,n)$;

\item
inequality \eqref{P3-1} holds true for all $\omega_i\in\Omega_i$ with $\varphi(\|\omega_i-x\|)<\|x-\bx\|$ $(i=1,\ldots,n)$.
\end{enumerate}
\end{proposition}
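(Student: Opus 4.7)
The plan is to establish the chain (i)$\Leftrightarrow$(ii)$\Rightarrow$(iii), (iv)$\Rightarrow$(ii), which closes the loop. The implications (ii)$\Rightarrow$(iii) and (ii)$\Rightarrow$(iv) are immediate, since (iii) and (iv) only restrict the range of admissible $\omega_i$'s.

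For (i)$\Leftrightarrow$(ii), I would argue as follows. Given any $\omega_i\in\Omega_i$, the bound $d(x,\Omega_i)\le\|x-\omega_i\|$ combined with the monotonicity of $\varphi$ yields
\begin{align*}
\varphi\Big(\max_{1\le i\le n}d(x,\Omega_i)\Big)\le\varphi\Big(\max_{1\le i\le n}\|x-\omega_i\|\Big),
\end{align*}
so (i)$\Rightarrow$(ii) trivially. Conversely, for each $i$ pick a sequence $\omega_i^{(k)}\in\Omega_i$ with $\|x-\omega_i^{(k)}\|\downarrow d(x,\Omega_i)$; since $\varphi$ is continuous at $\max_i d(x,\Omega_i)$ (it belongs to $\mathcal{C}$), inequality \eqref{P3-1} applied to the sequences passes in the limit to \eqref{T1-2}.

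The crux is (iii)$\Rightarrow$(ii) and (iv)$\Rightarrow$(ii), both of which follow from a dichotomy argument that exploits the assumption $\bx\in\cap_{i=1}^n\Omega_i$. Fix arbitrary $\omega_i\in\Omega_i$ $(i=1,\ldots,n)$. If all $\omega_i$'s satisfy the restriction in (iv), i.e.\ $\varphi(\|x-\omega_i\|)<\|x-\bx\|$, then (iv) directly gives \eqref{P3-1}. Otherwise, there exists $i_0$ with $\varphi(\|x-\omega_{i_0}\|)\ge\|x-\bx\|$, equivalently $\|x-\omega_{i_0}\|\ge\varphi^{-1}(\|x-\bx\|)$, so by monotonicity of $\varphi$,
\begin{align*}
\varphi\Big(\max_{1\le i\le n}\|x-\omega_i\|\Big)\ge\varphi(\|x-\omega_{i_0}\|)\ge\|x-\bx\|\ge d\Big(x,\bigcap_{i=1}^n\Omega_i\Big),
\end{align*}
where the last estimate uses $\bx\in\cap_{i=1}^n\Omega_i$. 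Either way, \eqref{P3-1} holds, proving (iv)$\Rightarrow$(ii). The implication (iii)$\Rightarrow$(ii) runs identically: if some $\omega_{i_0}$ violates the restriction in (iii), then $\|x-\omega_{i_0}\|\ge\|\omega_{i_0}-\bx\|-\|x-\bx\|\ge\varphi^{-1}(\|x-\bx\|)$ by the triangle inequality, and the same chain delivers $\varphi(\max_i\|x-\omega_i\|)\ge\|x-\bx\|\ge d(x,\cap_{i=1}^n\Omega_i)$.

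The only mildly delicate point is recognising the dichotomy and keeping track of the two slightly different restrictions in (iii) and (iv); once one observes that in the ``bad'' case the right-hand side of \eqref{P3-1} is already $\ge\|x-\bx\|$ and hence trivially dominates $d(x,\cap_{i=1}^n\Omega_i)$, no further work is needed. \qed
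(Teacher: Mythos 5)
Your proposal is correct and follows essentially the same route as the paper: the equivalence (i)$\Leftrightarrow$(ii) and the downward implications are straightforward, and the loop is closed by exactly the dichotomy the paper uses, namely that whenever some $\omega_{i_0}$ violates the restriction in (iii) or (iv), one gets $\varphi\big(\max_i\|x-\omega_i\|\big)\ge\|x-\bx\|\ge d\big(x,\cap_{i=1}^n\Omega_i\big)$ thanks to $\bx\in\cap_{i=1}^n\Omega_i$, so \eqref{P3-1} holds trivially. The only cosmetic difference is that you prove (iii)$\Rightarrow$(ii) directly, whereas the paper chains (ii)$\Rightarrow$(iii)$\Rightarrow$(iv)$\Rightarrow$(ii); both arrangements are complete.
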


\begin{proof}
The equivalence (i) \iff (ii) and implications (ii) \folgt (iii) \folgt (iv) are straightforward.
{We next} show that	
(iv) \folgt (ii).
Let condition (iv) hold true, $\omega_i\in\Omega_i$ $(i=1,\ldots,n)$, and $\varphi(\|\omega_i-x\|)\ge\|x-\bx\|$ for some $i$.
Then
\begin{equation*}
d\Big(x,\bigcap_{i=1}^n\Omega_i\Big)\le \|x-\bx\|\le\varphi\Big(\max_{1\le i\le n} \|x-\omega_i\|\Big),
\end{equation*}
i.e. {inequality} \eqref{P3-1} is satisfied, and consequently {condition} (ii) holds true.
\qed\end{proof}	

\section{Slope Sufficient Conditions}\label{S4}

In this section, we formulate slope sufficient conditions for the properties in Definition~\ref{D1}.
The conditions are straightforward consequences of the \EVP\ (Lemma~\ref{EVP}) applied to appropriate \lsc\ functions.
Throughout this section, $X$ is a Banach space, and the sets $\Omega_1,\ldots,\Omega_n$ are closed.
These are exactly the assumptions which ensure that the \EVP\ is applicable.
In view of Proposition~\ref{pro1}(iii), it suffices to assume that $\bx\in\bd\cap_{i=1}^n\Omega_i$.

The sufficient conditions for {the three properties} follow the same pattern.
We first {establish} nonlocal slope sufficient conditions arising from the \EVP.
These nonlocal conditions are largely of theoretical interest (unless the sets are convex): they encapsulate the application of the \EVP\ and serve as a source of more practical local (infinitesimal) conditions.
The corresponding local slope sufficient conditions, their H\"older as well as simplified $\de$-free versions are formulated as corollaries.
This way we expose the hierarchy of this type of conditions.

Along with the standard maximum norm on $X^{n+1}$, we are going to use also the following norm depending on a parameter $\ga>0$:
\begin{gather}\label{pnorm}
\|(x_1,\ldots,x_n,x)\|_{\ga} :=\max\Big\{\|x\|,\ga\max_{1\le i\le n} \|x_i\|\Big\},\quad
x_1,\ldots,x_n,x\in X.
\end{gather}

\subsection{Semitransversality}

\begin{theorem}\label{P7}
The collection $\{\Omega_1,\ldots,\Omega_n\}$ is $\varphi-$semitransversal at $\bx$ with some $\delta>0$ if, for some $\ga>0$ and any $x_i\in{X}$ $(i=1,\ldots,n)$ satisfying
\begin{gather}\label{P7-0}
0<\max_{1\le i\le n}\|x_i\|<\varphi\iv(\de),
\end{gather}
there exists a $\la\in]\varphi\left(\max_{1\le i\le n}\|x_i\|\right),\de[$ such that
\sloppy
\begin{align}\label{P7-1}
\sup_{\substack{u_i\in\Omega_i\;(i=1,\ldots,n),\;u\in X\\
(u_1,\ldots,u_n,u) \ne(\omega_1,\ldots,\omega_n,x)}} \dfrac{\varphi\Big(\max\limits_{1\le i\le n} \|\omega_i-x_i-x\|\Big)-\varphi\Big(\max\limits_{1\le i\le n} \|u_i-x_i-u\|\Big)}{\|(u_1,\ldots,u_n,u) -(\omega_1,\ldots,\omega_n,x)\|_{\ga}}\ge1
\end{align}
\if{
\NDC{8.5.19
Should we use $|\nabla\widehat{f}|_\ga^\diamond(\omega_1,\ldots,\omega_n,x)$here?}
}\fi
for all $x\in{X}$ and $\omega_i\in\Omega_i$ $(i=1,\ldots,n)$ satisfying
\begin{gather}\label{P7-2}
\|x-\bx\|<\la,\quad
\max_{1\le i\le n}\|\omega_i-\bx\|<\frac{\la}{\ga}, \\\label{P7-4}
0<\max_{1\le i\le n}{\|\omega_i-x_i-x\|}\le\max_{1\le i\le n}{\|x_i\|}.
\end{gather}
\end{theorem}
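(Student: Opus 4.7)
The natural strategy is a proof by contradiction using Theorem~\ref{T4.1}(i) as the target metric characterization and the Ekeland variational principle (Lemma~\ref{EVP}) applied to the function
\begin{equation*}
f(u_1,\ldots,u_n,u):=\varphi\Big(\max_{1\le i\le n}\|u_i-x_i-u\|\Big)
\end{equation*}
on the complete metric space $Y:=\Omega_1\times\cdots\times\Omega_n\times X$ equipped with the norm $\|\cdot\|_\ga$ from \eqref{pnorm} (here closedness of the $\Omega_i$'s and Banach-ness of $X$ are exactly what is needed). Note $f\ge 0$, $f$ is continuous, and $f(\bx,\ldots,\bx,\bx)=\varphi(\max_i\|x_i\|)=:\rho_0$, while $f$ vanishes precisely on tuples whose last coordinate lies in $\cap_i(\Omega_i-x_i)$.

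Suppose for contradiction that the conclusion of Theorem~\ref{T4.1}(i) fails, i.e., there exist $x_i\in X$ with $\rho_0=\varphi(\max_i\|x_i\|)<\de$ (necessarily with $\max_i\|x_i\|>0$, as the zero case is trivial since $\bx\in\cap_i\Omega_i$) and
\begin{equation*}
d\Big(\bx,\bigcap_{i=1}^n(\Omega_i-x_i)\Big)>\rho_0.
\end{equation*}
Then \eqref{P7-0} holds, so by hypothesis there is a $\la\in]\rho_0,\de[$ such that the slope inequality \eqref{P7-1} is satisfied at every $(\omega_1,\ldots,\omega_n,x)$ fulfilling \eqref{P7-2}--\eqref{P7-4}.

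The key step is to choose the Ekeland parameters so that the chosen point lies in the region where \eqref{P7-1} is available \emph{and} the contrapositive slope estimate from EVP is strictly less than $1$. I would pick $\varepsilon\in\,]\rho_0,\min\{\la,d(\bx,\cap_i(\Omega_i-x_i))\}[$ and then $\mu\in\,]\varepsilon,\min\{\la,d(\bx,\cap_i(\Omega_i-x_i))\}[$; both intervals are nonempty since $\rho_0<\la$ and $\rho_0<d(\bx,\cap_i(\Omega_i-x_i))$. Applying Lemma~\ref{EVP} to $f$ at $(\bx,\ldots,\bx,\bx)$ (with $f(\bx,\ldots,\bx,\bx)=\rho_0<\varepsilon\le\inf_Y f+\varepsilon$) yields a point $(\hat\omega_1,\ldots,\hat\omega_n,\hat x)\in Y$ satisfying $\|(\hat\omega_1-\bx,\ldots,\hat\omega_n-\bx,\hat x-\bx)\|_\ga<\mu$, $f(\hat\omega_1,\ldots,\hat\omega_n,\hat x)\le\rho_0$, and the descent estimate
\begin{equation*}
f(u_1,\ldots,u_n,u)+\tfrac{\varepsilon}{\mu}\|(u_1-\hat\omega_1,\ldots,u_n-\hat\omega_n,u-\hat x)\|_\ga\ge f(\hat\omega_1,\ldots,\hat\omega_n,\hat x)
\end{equation*}
for all $(u_1,\ldots,u_n,u)\in Y$.

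The first two EVP conclusions, combined with $\mu<\la$ and strict monotonicity of $\varphi$, give \eqref{P7-2} and the upper bound in \eqref{P7-4}; and because $\|\hat x-\bx\|<\mu<d(\bx,\cap_i(\Omega_i-x_i))$, the point $\hat x$ cannot lie in $\cap_i(\Omega_i-x_i)$, so $\max_i\|\hat\omega_i-x_i-\hat x\|>0$, yielding the lower bound in \eqref{P7-4}. Therefore the hypothesis forces the supremum in \eqref{P7-1} (evaluated at this $(\hat\omega_1,\ldots,\hat\omega_n,\hat x)$) to be $\ge 1$, whereas the third EVP conclusion forces the same supremum to be $\le\varepsilon/\mu<1$---a contradiction. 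The main obstacle is precisely the joint choice of $\varepsilon$ and $\mu$: one needs $\varepsilon>\rho_0$ to fire EVP, $\mu<\la$ and $\mu<d(\bx,\cap_i(\Omega_i-x_i))$ to land in the valid region and to guarantee $\hat x\notin\cap_i(\Omega_i-x_i)$, and $\varepsilon<\mu$ to extract the strict inequality $\varepsilon/\mu<1$; the gap $\la>\rho_0$ supplied by the hypothesis is exactly what makes this possible.
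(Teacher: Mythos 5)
Your proposal is correct and follows essentially the same route as the paper's own proof: negate the (metric reformulation of the) semitransversality property, apply the Ekeland variational principle to $f(u_1,\ldots,u_n,u)=\varphi(\max_i\|u_i-x_i-u\|)$ on $\Omega_1\times\cdots\times\Omega_n\times X$ with the $\ga$-norm, check that the Ekeland point satisfies \eqref{P7-2}--\eqref{P7-4}, and contradict \eqref{P7-1} via the ratio $\varepsilon/\mu<1$. The paper packages your two parameters into a single $\la':=\min\{\la,\rho\}$ (with $\varepsilon\in\,]\rho_0,\la'[$), but this is only a cosmetic difference.
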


The proof below employs two closely related nonnegative functions on $X^{n+1}$ determined by the given function $\varphi\in\mathcal{C}$ and vectors
$x_1,\ldots,x_n\in X$:
\begin{gather}\label{f1}
f(u_1,\ldots,u_n,u):=\varphi\Big(\max\limits_{1\le i\le n}\|u_i-x_i-u\|\Big),\quad u_1,\ldots,u_n,u\in X,
\\\label{hatf}
\widehat{f}:=f+i_{\Omega_1\times\ldots\times\Omega_n}. \end{gather}

\begin{proof}
Suppose $\{\Omega_1,\ldots,\Omega_n\}$ is not $\varphi-$semitransversal at $\bx$ with some ${\delta>0}$, and
let $\ga>0$ be given.
By Definition~\ref{D1}(i), there exist a $\rho\in]0,\delta[$ and ${x_i\in{X}}$ {$(i=1,\ldots,n)$} with ${{\varphi(\max_{1\le i\le n}\|x_i\|)<\rho}}$ such that $\cap_{i=1}^{n}(\Omega_i-x_i)\cap B_\rho(\bar{x})=\emptyset$.
Thus, ${\max_{1\le i\le n}\|x_i\|>0}$.
Let ${\la\in]\varphi\left(\max_{1\le i\le n}\|x_i\|\right),\de[}$ and $\la':=\min\{\la,\rho\}$.
Then ${\la'>\varphi\left(\max_{1\le i\le n}\|x_i\|\right)}$,
$\cap_{i=1}^{n}(\Omega_i-x_i)\cap B_{\la'}(\bar{x})=\emptyset$, and consequently,
\begin{gather}\label{P7P01}
\max_{1\le i\le n}\|u_i-x_i-u\|>0
\qdtx{for all}
u_i\in\Omega_i\;(i=1,\ldots,n),\;u\in B_{\la'}(\bar{x}).
\end{gather}
Let $f$ and $\widehat{f}$ be defined by \eqref{f1} and \eqref{hatf}, respectively, while $X^{n+1}$ be equipped with the metric induced by the norm \eqref{pnorm}.
We have $\widehat{f}(\bx,\ldots,\bx,\bx) =\varphi\left(\max_{1\le i\le n} \|x_i\|\right)<\la'$.
Choose a number $\eps$ such that $\widehat{f}(\bx,\ldots,\bx,\bx) <\eps<\la'$.
Applying {the \EVP}, we can find points $\omega_i\in\Omega_i$ $(i=1,\ldots,n)$ and $x\in X$ such that
\sloppy
\begin{align}\label{P7P02}
\|(\omega_1,\ldots,\omega_n,x)-(\bx,\ldots,\bx,\bx)\|_{\ga}<\la'\le\la,\quad f(\omega_1,\ldots,\omega_n,x)\le f(\bx,\ldots,\bx,\bx),
\\\label{P7P03}
f(\omega_1,\ldots,\omega_n,x)
-f(u_1,\ldots,u_n,u)\le \frac{\varepsilon}{\la'}\|(u_1,\ldots,u_n,u) -(\omega_1,\ldots,\omega_n,x)\|_{\ga}
\end{align}
for all $(u_1,\ldots,u_n,u)\in \Omega_1\times\ldots\times \Omega_n\times X$.
In view of \eqref{P7P01} and the definitions of $\la'$ and~$f$, conditions \eqref{P7P02} yield \eqref{P7-2} and \eqref{P7-4}.
Since $\eps/\la'<1$, condition \eqref{P7P03} contradicts~\eqref{P7-1}.	
\qed\end{proof}

\begin{remark}
The expression in the left-hand side of \eqref{P7-1} is the nonlocal $\ga$-slope \cite[p.~60]{Kru15}
at $(\omega_1,\ldots,\omega_n,x)$
of the function \eqref{hatf}.
\end{remark}

\if{
In the H\"older setting, Proposition~\ref{P7} yields the following statement.
}\fi

\if{
\begin{corollary}\label{P7+}
Let $\Omega_1,\ldots,\Omega_n$ be closed subsets of a Banach space $X$, $\bx\in \cap_{i=1}^n\Omega_i$, $\al>0$ and $q>0$.
$\{\Omega_1,\ldots,\Omega_n\}$ is $\al-$semitransversal of order $q$ at $\bx$ with some $\delta>0$ if, for some $\ga>0$ and any $x_i\in{X}$ $(i=1,\ldots,n)$ with $0<\max_{1\le i\le n}\|x_i\|<(\al\de)^{\frac{1}{q}}$, there exists a $\la\in]\al\iv\left(\max_{1\le i\le n}\|x_i\|\right)^q,\de[$ such that
\sloppy
\begin{align}\label{C4-1}
\sup_{\substack{u_i\in\Omega_i\;(i=1,\ldots,n),\;u\in X\\(u_1,\ldots,u_n,u) \ne (\omega_1,\ldots,\omega_n,x)}}\dfrac{\left(\max\limits_{1\le i\le n} \|\omega_i-x_i-x\|\right)^q-\left(\max\limits_{1\le i\le n} \|u_i-x_i-u\|\right)^q} {\|(u_1,\ldots,u_n,u) -(\omega_1,\ldots,\omega_n,x)\|_{\ga}}\ge\al
\end{align}
for all $x\in{X}$ and $\omega_i\in\Omega_i$ $(i=1,\ldots,n)$ satisfying \eqref{P7-2} and \eqref{P7-4}.	
\end{corollary}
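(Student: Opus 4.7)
The plan is to derive Corollary~\ref{P7+} as an immediate specialization of Theorem~\ref{P7} to the power function $\varphi(t):=\alpha^{-1}t^q$. First I would check that this $\varphi$ belongs to $\mathcal{C}^1$: it is continuous, strictly increasing on $\mathbb{R}_+$, satisfies $\varphi(0)=0$ and $\lim_{t\to+\infty}\varphi(t)=+\infty$, and is differentiable on $]0,+\infty[$ with positive derivative. Its inverse is $\varphi^{-1}(s)=(\alpha s)^{1/q}$. With this choice, Definition~\ref{D1}(i) reduces verbatim to Definition~\ref{D0}(i), so `$\varphi$-semitransversality at $\bx$ with $\delta$' is nothing other than `$\alpha$-semitransversality of order $q$ at $\bx$ with $\delta$', which is the conclusion sought.

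Next I would translate the hypotheses. The domain constraint $0<\max_i\|x_i\|<\varphi^{-1}(\delta)$ becomes $0<\max_i\|x_i\|<(\alpha\delta)^{1/q}$, and the interval for $\lambda$ becomes $]\alpha^{-1}(\max_i\|x_i\|)^q,\delta[$; conditions \eqref{P7-2} and \eqref{P7-4} do not involve $\varphi$ and transfer unchanged. It remains to match the slope inequality: substituting $\varphi(t)=\alpha^{-1}t^q$ into the numerator of \eqref{P7-1} produces $\alpha^{-1}\bigl[(\max_i\|\omega_i-x_i-x\|)^q-(\max_i\|u_i-x_i-u\|)^q\bigr]$, so multiplying through by $\alpha>0$ converts the lower bound from $1$ to $\alpha$ and recovers exactly \eqref{C4-1}. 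Hence the hypothesis of Theorem~\ref{P7} holds for this $\varphi$ if and only if the hypothesis of the corollary holds.

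Since the argument is purely a specialization, there is no substantive obstacle: the only task is the algebraic bookkeeping above. The real work, namely the construction of the approximate minimizer of $\widehat{f}$ via the Ekeland variational principle together with the verification of the localization and nontriviality constraints, is fully carried out inside Theorem~\ref{P7} itself; the corollary merely rephrases that result in the H\"older language of Definition~\ref{D0}(i).
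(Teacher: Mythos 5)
Your proposal is correct and coincides with the paper's own argument: the corollary is obtained as a direct specialization of Theorem~\ref{P7} to $\varphi(t):=\al\iv t^q$, using $\varphi\iv(t)=(\al t)^{\frac{1}{q}}$ and the observation that multiplying the slope inequality through by $\al$ turns the threshold $1$ into $\al$. The bookkeeping you describe is exactly what the paper does.
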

\begin{proof}
The assertion is a direct consequence of Theorem~\ref{P7} with $\varphi(t):=\al\iv t^q$ for all $t\ge0$; then of course, $\varphi\iv(t)=(\al t)^{\frac{1}{q}}$.
\qed\end{proof}	
}\fi

The next statement is a localized version of Theorem~\ref{P7}.

\begin{corollary}\label{P7.2}
\begin{enumerate}
\item
The collection $\{\Omega_1,\ldots,\Omega_n\}$ is $\varphi-$semitransversal at $\bx$ with some $\delta>0$ if, for some $\ga>0$ and any $x_i\in{X}$ $(i=1,\ldots,n)$ satisfying
\eqref{P7-0}, there exists a $\la\in]\varphi\left(\max_{1\le i\le n}\|x_i\|\right),\de[$ such that
\sloppy
\begin{align}\label{P7.2-1}
\limsup_{\substack{{u_i}\stackrel{\Omega_i}{\to}\omega_i\; (i=1,\ldots,n),\;u\to x\\ (u_1,\ldots,u_n,u)\ne(\omega_1,\ldots,\omega_n,x)}} \hspace{-1mm}\dfrac{\varphi\Big(\max\limits_{1\le i\le n} \|\omega_i-x_i-x\|\Big)-\varphi\Big(\max\limits_{1\le i\le n}\|u_i-x_i-u\|\Big)}{\|(u_1,\ldots,u_n,u) -(\omega_1,\ldots,\omega_n,x)\|_{\ga}}\ge1
\end{align}
for all $x\in{X}$ and $\omega_i\in\Omega_i$ $(i=1,\ldots,n)$ {satisfying} \eqref{P7-2} and \eqref{P7-4}.

\item
If $\varphi\in\mathcal{C}^1$,
then {inequality \eqref{P7.2-1} in part \rm (i)} can be replaced by
\begin{multline}\label{C6-2}
\varphi'\Big(\max\limits_{1\le i\le n} \|\omega_i-x_i-x\|\Big)\times\\
\limsup_{\substack{{u_i}\stackrel{\Omega_i}{\to}\omega_i\; (i=1,\ldots,n),\;u\to x\\(u_1,\ldots,u_n,u)\ne(\omega_1,\ldots,\omega_n,x)}} \dfrac{\max\limits_{1\le i\le n}\|\omega_i-x_i-x\|-\max\limits_{1\le i\le n} \|u_i-x_i-u\|}{\|(u_1,\ldots,u_n,u)-(\omega_1,\ldots,\omega_n,x)\|_{\ga}}\ge1.
\end{multline}
\end{enumerate}
\end{corollary}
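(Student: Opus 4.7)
The plan is to handle parts (i) and (ii) separately. Both should reduce almost immediately to results already established in the paper---part (i) by a direct comparison with the nonlocal slope condition in Theorem~\ref{P7}, and part (ii) by invoking the chain rule of Lemma~\ref{L2}.

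For part (i), I would fix $\ga>0$, vectors $x_i\in X$ satisfying \eqref{P7-0}, and (for each admissible $(\omega_1,\ldots,\omega_n,x)$ as in \eqref{P7-2}--\eqref{P7-4}) a suitable $\la$ given by the hypothesis, and then simply compare the two expressions. The limsup on the left-hand side of \eqref{P7.2-1}, taken along sequences $u_i\stackrel{\Omega_i}{\to}\omega_i$ and $u\to x$, is clearly dominated by the supremum on the left-hand side of \eqref{P7-1}, which ranges over all $u_i\in\Omega_i$ and $u\in X$. Hence if the limsup is $\ge 1$, so is the supremum, and Theorem~\ref{P7} immediately delivers $\varphi$-semi\-trans\-versality of $\{\Omega_1,\ldots,\Omega_n\}$ at $\bx$ with the same $\delta$.

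For part (ii), I would equip $X^{n+1}$ with the metric induced by the $\ga$-norm \eqref{pnorm} and, for the fixed vectors $x_1,\ldots,x_n$, consider the function
\begin{align*}
g(u_1,\ldots,u_n,u):=\max_{1\le i\le n}\|u_i-x_i-u\|+i_{\Omega_1\times\ldots\times\Omega_n\times X}(u_1,\ldots,u_n,u).
\end{align*}
Condition \eqref{P7-4} ensures $g(\omega_1,\ldots,\omega_n,x)>0$, so since $\varphi\in\mathcal{C}^1$, the assumptions of Lemma~\ref{L2} are met at $(\omega_1,\ldots,\omega_n,x)$, and the chain rule yields
\begin{align*}
|\nabla(\varphi\circ g)|(\omega_1,\ldots,\omega_n,x)=\varphi'(g(\omega_1,\ldots,\omega_n,x))\cdot|\nabla g|(\omega_1,\ldots,\omega_n,x).
\end{align*}
The limsup in \eqref{P7.2-1} matches $|\nabla(\varphi\circ g)|$ at this point, and the limsup in \eqref{C6-2} matches $|\nabla g|$; therefore \eqref{C6-2} is equivalent to \eqref{P7.2-1}, and part (i) closes the argument.

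The only mild subtlety I expect is bookkeeping around the missing $[\cdot]_+$: the paper's slope is defined with positive parts, whereas the expressions in \eqref{P7.2-1} and \eqref{C6-2} are written without them. This is harmless---as soon as either limsup is $\ge 1>0$, inserting $[\cdot]_+$ does not change the value---so Lemma~\ref{L2} can be quoted verbatim. With that observation, the chain rule does the work in a single line and no further technical input is required.
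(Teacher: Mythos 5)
Your argument is correct and follows essentially the same route as the paper: part (i) is the observation that the local ($\gamma$-)slope expression in \eqref{P7.2-1} is dominated by the nonlocal one in \eqref{P7-1} (cf.\ Proposition~\ref{P1.1}(ii)), so Theorem~\ref{P7} applies directly, and part (ii) is the slope chain rule of Lemma~\ref{L2} (with Remark~\ref{R01}(i) justifying its use for $\varphi$ defined only near the relevant value), exactly as in the paper. Your remark about the harmlessness of the missing $[\cdot]_+$ when the limsup is $\ge 1>0$ is also accurate.
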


\begin{proof}
The expression in the left-hand side of \eqref{P7.2-1} is the $\ga$-slope \cite[p.~61]{Kru15} of the function \eqref{hatf} at $(\omega_1,\ldots,\omega_n,x)$.
The first assertion follows from Theorem~\ref{P7} in view of Proposition~\ref{P1.1}(ii), while the second one is a consequence of Lemma~\ref{L2} in view of Remark~\ref{R01}(i).
\qed\end{proof}

In the H\"older setting, Theorem~\ref{P7} and Corollary~\ref{P7.2} yield the following statement.

\begin{corollary}\label{C5.2}
Let
$\al>0$ and $q>0$.
The collection $\{\Omega_1,\ldots,\Omega_n\}$ is $\al-$semitransver\-sal of order $q$ at $\bx$ with some $\delta>0$ if, for some $\ga>0$ and any $x_i\in{X}$ $(i=1,\ldots,n)$ with $0<\max_{1\le i\le n}\|x_i\|<(\al\de)^\frac{1}{q}$, there exists a $\la\in]\al\iv(\max_{1\le i\le n}\|x_i\|)^q,\de[$ such that
\begin{align}\label{C3-0}
\sup_{\substack{u_i\in\Omega_i\;(i=1,\ldots,n),\;u\in X\\ (u_1,\ldots,u_n,u) \ne (\omega_1,\ldots,\omega_n,x)}}\dfrac{\Big(\max\limits_{1\le i\le n} \|\omega_i-x_i-x\|\Big)^q-\Big(\max\limits_{1\le i\le n} \|u_i-x_i-u\|\Big)^q}{\|(u_1,\ldots,u_n,u) -(\omega_1,\ldots,\omega_n,x)\|_{\ga}}\ge\al
\end{align}	
for all $x\in{X}$ and $\omega_i\in\Omega_i$ $(i=1,\ldots,n)$ satisfying \eqref{P7-2} and \eqref{P7-4}, or all the more, such that
\begin{multline}\label{C3-2}
q\Big(\max\limits_{1\le i\le n} \|\omega_i-x_i-x\|\Big)^{q-1}\\
\times \limsup_{\substack{{u_i}\stackrel{\Omega_i}{\to}\omega_i\; (i=1,\ldots,n),\;u\to x\\(u_1,\ldots,u_n,u)\ne(\omega_1,\ldots,\omega_n,x)}} \dfrac{\max\limits_{1\le i\le n}\|\omega_i-x_i-x\|-\max\limits_{1\le i\le n} \|u_i-x_i-u\|}{\|(u_1,\ldots,u_n,u)-(\omega_1,\ldots,\omega_n,x)\|_{\ga}}\ge\al.
\end{multline}
\end{corollary}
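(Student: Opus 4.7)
The plan is simply to specialize Theorem~\ref{P7} and Corollary~\ref{P7.2} to the power function $\varphi(t):=\al\iv t^q$ on $\R_+$, which belongs to $\mathcal{C}^1$, and to rewrite the resulting inequalities by clearing the factor $\al\iv$. I would first record that $\varphi\iv(t)=(\al t)^{1/q}$, so condition \eqref{P7-0} becomes $0<\max_{1\le i\le n}\|x_i\|<(\al\de)^{1/q}$, and the required interval $]\varphi(\max_i\|x_i\|),\de[$ for $\la$ becomes $]\al\iv(\max_i\|x_i\|)^q,\de[$, matching the hypotheses stated in Corollary~\ref{C5.2}.

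Next, for the nonlocal slope estimate: since $\varphi(s)-\varphi(r)=\al\iv(s^q-r^q)$, multiplying both sides of \eqref{P7-1} by $\al>0$ turns it into exactly \eqref{C3-0}. Thus the sufficient condition \eqref{C3-0}, together with the constraints \eqref{P7-2} and \eqref{P7-4} on $x$ and the $\omega_i$, is equivalent to the hypothesis of Theorem~\ref{P7} for this choice of $\varphi$, and therefore implies that $\{\Omega_1,\ldots,\Omega_n\}$ is $\al$-semitransversal of order $q$ at $\bx$ with $\de$.

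For the local (infinitesimal) version \eqref{C3-2}, I would invoke Corollary~\ref{P7.2}(ii): with $\varphi(t)=\al\iv t^q$ we have $\varphi'(t)=q\al\iv t^{q-1}$ on $]0,+\infty[$, so \eqref{C6-2} applied at the point $(\omega_1,\ldots,\omega_n,x)$ reads
\begin{align*}
q\al\iv\Big(\max_{1\le i\le n}\|\omega_i-x_i-x\|\Big)^{q-1}\cdot S \ge 1,
\end{align*}
where $S$ denotes the $\limsup$ appearing in \eqref{C3-2}. Multiplying through by $\al$ yields \eqref{C3-2}. The positivity hypothesis $\varphi'(f(x))>0$ required in the slope chain rule (Lemma~\ref{L2}) is ensured by \eqref{P7-4}, which forces $\max_i\|\omega_i-x_i-x\|>0$, hence $\varphi'$ is evaluated at a strictly positive number.

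No step is genuinely hard here; the only mild point of care is bookkeeping the constants so that the inequalities ``$\ge 1$'' in Theorem~\ref{P7}/Corollary~\ref{P7.2} become ``$\ge\al$'' in the H\"older formulation, and verifying that the differentiability/positivity hypotheses of the chain rule are automatic on the region where \eqref{P7-4} holds. Once these are checked, the corollary follows by direct substitution.
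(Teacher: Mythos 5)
Your proposal is correct and coincides with the paper's own argument: Corollary~\ref{C5.2} is proved there precisely by substituting $\varphi(t):=\al\iv t^q$ (so that $\varphi\iv(t)=(\al t)^{1/q}$ and $\varphi'(t)=q\al\iv t^{q-1}$) into Theorem~\ref{P7} and Corollary~\ref{P7.2} and clearing the factor $\al\iv$. Your additional remark that \eqref{P7-4} guarantees $\varphi'$ is evaluated at a strictly positive argument is a correct, if routine, piece of bookkeeping that the paper leaves implicit.
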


\begin{proof}
The statement is a direct consequence of Theorem~\ref{P7} and Corollary~\ref{P7.2} with $\varphi(t):=\al\iv t^q$ for all $t\ge0$.
Observe that $\varphi\iv(t)=(\al t)^{\frac{1}{q}}$.
\qed\end{proof}	

\begin{remark}\label{R5.3}
\begin{enumerate}
\item
On top of the explicitly given restriction $\|\omega_i-\bx\|<\la/\ga$ in {Theorem~\ref{P7}} (and similar conditions in its corollaries) on the choice of the points {$\omega_i\in\Omega_i$}, which involves $\ga$, the other conditions implicitly impose another one:
\begin{gather*}
\|\omega_i-\bx\|\le\|x-\bx\|+\|\omega_i-x_i-x\|+\|x_i\| \le\|x-\bx\|+2\max_{1\le i\le n}\|x_i\|,
\end{gather*}
and consequently, $\|\omega_i-\bx\|<\la+2\varphi\iv(\de)$.
This alternative restriction can be of importance when $\ga$ is small.

\item
The statements of Theorem~\ref{P7} and its corollaries can be simplified (and weakened!) by dropping condition \eqref{P7-4}.

\item
Inequalities \eqref{P7-1}, \eqref{P7.2-1}--\eqref{C3-2}, which are crucial for checking nonlinear semitransversality, involve two groups of parameters: on one hand, sufficiently small vectors {$x_i\in{X}$}, not all zero, and on the other hand, points $x\in{X}$ and {$\omega_i\in\Omega_i$} near $\bx$.
Note an important difference between these two groups.
The magnitudes of $x_i$ are directly controlled by the value of $\de$ in the definition of $\varphi-$semitransversality: $\varphi\left(\max_{1\le i\le n}\|x_i\|\right)<\de$.
At the same time, taking into account that $\la$ can be made arbitrarily close to $\varphi\left(\max_{1\le i\le n}\|x_i\|\right)$, the magnitudes of $x-\bx$ and $\omega_i-\bx$ (as well as $\omega_i-x_i-x$) are determined by $\de$ indirectly; they are controlled by $\max_{1\le i\le n}{\|x_i\|}$: cf. conditions \eqref{P7-2} and \eqref{P7-4}.

\item
In view of the definition of the parametric norm \eqref{pnorm}, if any of the inequalities \eqref{P7-1}, \eqref{P7.2-1}--\eqref{C3-2} holds true for some $\gamma>0$, then it also holds for any $\ga'\in]0,\ga[$.
\if{
\NDC{4.11.19
In the other two subsections, we are missing a similar comment about $\ga'$.}
}\fi

\item
Even in the linear setting, the characterizations in 	Corollary~\ref{C5.2} are new.
\end{enumerate}
\end{remark}

The next corollary provides a simplified (and weaker!) version of Theorem~\ref{P7}.
The simplification comes at the expense of eliminating the difference between the two groups of parameters highlighted in Remark~\ref{R5.3}(iii).

\begin{corollary}\label{C2.1}
The collection $\{\Omega_1,\ldots,\Omega_n\}$ is $\varphi-$semitransversal at $\bx$ with some ${\delta>0}$ if, for some $\ga>0$ and any $x_i\in{X}$ $(i=1,\ldots,n)$ satisfying
\eqref{P7-0}, inequality \eqref{P7-1} holds
for all $x\in{B}_\de(\bx)$ and $\omega_i\in\Omega_i\cap{B}_{\de/\ga}(\bx)$ $(i=1,\ldots,n)$ {satisfying} \eqref{P7-4}.
\sloppy
\end{corollary}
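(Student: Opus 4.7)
The plan is to derive Corollary~\ref{C2.1} as an immediate consequence of Theorem~\ref{P7}. The key observation is that the bounds on $x$ and on the $\omega_i$ appearing in the hypothesis of the corollary are strictly looser than those in condition \eqref{P7-2} of the theorem, so that the hypothesis of the corollary entails, tautologically, the hypothesis of the theorem.

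Concretely, I would fix $\ga>0$ for which the hypothesis of the corollary holds, and pick any $x_i\in X$ $(i=1,\ldots,n)$ satisfying \eqref{P7-0}. The inequality $\varphi(\max_{1\le i\le n}\|x_i\|)<\de$ guarantees that the interval $]\varphi(\max_{1\le i\le n}\|x_i\|),\de[$ is nonempty, so one can select any $\la$ in it. Then any $x\in X$ and $\omega_i\in\Omega_i$ satisfying \eqref{P7-2} automatically verify $\|x-\bx\|<\la<\de$ and $\|\omega_i-\bx\|<\la/\ga<\de/\ga$, that is $x\in B_\de(\bx)$ and $\omega_i\in\Omega_i\cap B_{\de/\ga}(\bx)$. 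If in addition \eqref{P7-4} holds, the hypothesis of Corollary~\ref{C2.1} delivers inequality \eqref{P7-1} at $(\omega_1,\ldots,\omega_n,x)$. This confirms the hypothesis of Theorem~\ref{P7} with the same $\ga$ and $\de$, and the theorem yields $\varphi-$semitransversality of $\{\Omega_1,\ldots,\Omega_n\}$ at $\bx$ with $\de$.

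There is no substantive obstacle: the corollary simply trades the $\la$-dependent local window used in Theorem~\ref{P7} for a single uniform window of radius $\de$ (respectively $\de/\ga$), at the price of requiring \eqref{P7-1} over a larger set of points. No further use of the Ekeland variational principle or of the slope chain rule is needed, all the analytic work being already encapsulated in the proof of Theorem~\ref{P7}.
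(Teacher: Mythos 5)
Your proof is correct and is exactly the intended argument: since any $\la\in]\varphi(\max_{1\le i\le n}\|x_i\|),\de[$ satisfies $\la<\de$, the set of points constrained by \eqref{P7-2} and \eqref{P7-4} is contained in the uniform window $B_\de(\bx)\times\prod_i\bigl(\Omega_i\cap B_{\de/\ga}(\bx)\bigr)$ restricted by \eqref{P7-4}, so the corollary's hypothesis implies that of Theorem~\ref{P7}. The paper states this corollary without proof precisely because it is this immediate weakening, as flagged in Remark~\ref{R5.3}(iii).
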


Sacrificing the estimates for $\de$ in Theorem~\ref{P7}, and Corollaries~\ref{P7.2} and ~\ref{C2.1}, we arrive at the following `$\de$-free' statement.

\begin{corollary}\label{C3+}
The collection $\{\Omega_1,\ldots,\Omega_n\}$ is $\varphi-$semitransversal at $\bx$ if,
for some $\ga>0$ and all $x_i\in{X}$ ${(i=1,\ldots,n)}$ near $0$ with $\max_{1\le i\le n}{\|x_i\|}>0$, {$x\in{X}$ and} $\omega_i\in\Omega_i$ $(i=1,\ldots,n)$ near $\bx$ satisfying
\eqref{P7-4}, inequality \eqref{P7-1} holds true.
Moreover,
inequality \eqref{P7-1} can be replaced by its localized version \eqref{P7.2-1}, or by \eqref{C6-2} if $\varphi\in\mathcal{C}^1$.
\end{corollary}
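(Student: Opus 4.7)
The plan is to deduce Corollary~\ref{C3+} from Theorem~\ref{P7}, together with Corollary~\ref{P7.2} for the "moreover" part, by quantifying the informal "near" in the hypothesis and choosing $\de$ small enough to fit Theorem~\ref{P7}'s framework. No new ideas are needed beyond a careful match of neighbourhoods and parameters.

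First, I would unpack the hypothesis. By assumption, there exist $\eta>0$ and $\eta'>0$ such that \eqref{P7-1} holds for every $x_i\in\eta\B$ $(i=1,\ldots,n)$ with $\max_{1\le i\le n}\|x_i\|>0$, every $x\in B_{\eta'}(\bx)$ and every $\omega_i\in\Omega_i\cap B_{\eta'}(\bx)$ subject to \eqref{P7-4}. Since $\varphi\in\mathcal C$ is continuous with $\varphi(0)=0$, I can now fix
\begin{equation*}
\de:=\min\{\varphi(\eta),\ \eta',\ \ga\eta'\}>0.
\end{equation*}
This choice guarantees $\varphi\iv(\de)\le\eta$, so condition \eqref{P7-0} forces $\max_{1\le i\le n}\|x_i\|<\eta$; it also guarantees $\de\le\eta'$ and $\de/\ga\le\eta'$, which together with \eqref{P7-2} yield $\|x-\bx\|<\la<\de\le\eta'$ and $\max_{1\le i\le n}\|\omega_i-\bx\|<\la/\ga<\de/\ga\le\eta'$.

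Next I would verify the quantifier on $\la$. Given any $x_i$ satisfying \eqref{P7-0}, we have $\varphi(\max_{1\le i\le n}\|x_i\|)<\de$, so the interval $]\varphi(\max_{1\le i\le n}\|x_i\|),\de[$ is nonempty; pick any $\la$ therein. By the preceding paragraph, for every $x$ and $\omega_i\in\Omega_i$ satisfying \eqref{P7-2} and \eqref{P7-4}, the points $(x,\omega_1,\ldots,\omega_n)$ lie in the neighbourhoods on which \eqref{P7-1} is assumed to hold; thus \eqref{P7-1} is satisfied. Theorem~\ref{P7} therefore certifies that $\{\Omega_1,\ldots,\Omega_n\}$ is $\varphi-$semitransversal at $\bx$ with this $\de$.

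For the second assertion, I would repeat the argument verbatim, replacing Theorem~\ref{P7} with Corollary~\ref{P7.2}(i) when the hypothesis is phrased via the local slope inequality \eqref{P7.2-1}, and with Corollary~\ref{P7.2}(ii) when $\varphi\in\mathcal C^1$ and \eqref{C6-2} is assumed. The only real obstacle—and it is a very mild one—is to make sure the parameter $\ga$ in the definition of $\|\cdot\|_\ga$ is the same in the hypothesis and in the application of Theorem~\ref{P7}; this is why $\ga\eta'$ enters the definition of $\de$ above. Everything else is a routine translation between a "for some neighbourhood" statement and the quantitative one in Theorem~\ref{P7}.
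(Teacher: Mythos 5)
Your proof is correct and is exactly the argument the paper intends (the corollary is stated as an immediate consequence of Theorem~\ref{P7} and Corollary~\ref{P7.2} obtained by ``sacrificing the estimates for $\de$''): quantify the two neighbourhoods, set $\de:=\min\{\varphi(\eta),\eta',\ga\eta'\}$ so that \eqref{P7-0} and \eqref{P7-2} confine all test points to those neighbourhoods, and invoke Theorem~\ref{P7} (respectively Corollary~\ref{P7.2} for the localized and differentiable versions). The bookkeeping with $\varphi\iv(\de)\le\eta$, $\la<\de\le\eta'$ and $\la/\ga<\eta'$ is exactly right.
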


\subsection{Subtransversality}\label{S5.2}

\begin{theorem}\label{P8}
The collection $\{\Omega_1,\ldots,\Omega_n\}$ is $\varphi-$subtransversal at $\bx$ with some $\de_1>0$ and $\de_2>0$ if, for some $\ga>0$ and any $x'\in X$ satisfying
\begin{gather}\label{P8-0}
\|x'-\bx\|<\de_2,\quad
0<\max_{1\le i\le n}d(x',\Omega_i)<\varphi\iv(\de_1),
\end{gather}
there exists a $\la\in]\varphi\left(\max_{1\le i\le n} d(x',\Omega_i)\right),\de_1[$ such that
\sloppy
\begin{align}\label{P8-1}
\sup_{\substack{u_i\in\Omega_i\;(i=1,\ldots,n),\;u\in X\\
(u_1,\ldots,u_n,u) \ne(\omega_1,\ldots,\omega_n,x)}} \dfrac{\varphi\Big(\max\limits_{1\le i\le n} \|\omega_i-x\|\Big)-\varphi\Big(\max\limits_{1\le i\le n} \|u_i-u\|\Big)} {\|(u_1,\ldots,u_n,u)-(\omega_1,\ldots,\omega_n,x)\|_{\ga}} \ge1
\end{align}
for all $x\in{X}$ and $\omega_i,\omega'_i\in\Omega_i$ $(i=1,\ldots,n)$ satisfying
\begin{gather}\label{P8-3}
\|x-x'\|<\la,\quad
\max_{1\le i\le n}\|\omega_i-\omega'_i\|<\frac{\la}{\ga},
\\\label{P8-4}
0<\max_{1\le i\le n}{\|\omega_i-x\|}\le\max_{1\le i\le n} \|\omega'_i-x'\|<\varphi\iv(\la).
\end{gather}
\end{theorem}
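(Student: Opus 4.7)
The plan is to argue by contraposition, mirroring the structure of the proof of Theorem~\ref{P7}, but with the subtransversality failure providing the reference point $x'$ and an approximate projection $(\omega'_1,\ldots,\omega'_n)$ onto $\Omega_1\times\ldots\times\Omega_n$, which together serve as the starting point for the Ekeland Variational Principle.

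Suppose $\{\Omega_1,\ldots,\Omega_n\}$ is not $\varphi-$subtransversal at $\bx$ with the given $\de_1,\de_2$. By Definition~\ref{D1}(ii), there exist $\rho\in]0,\de_1[$ and $x'\in B_{\de_2}(\bx)$ with $\varphi(\max_{1\le i\le n}d(x',\Omega_i))<\rho$ such that $\cap_{i=1}^n\Omega_i\cap B_\rho(x')=\es$. In particular, $x'\notin\cap_{i=1}^n\Omega_i$, hence $\max_{1\le i\le n}d(x',\Omega_i)>0$, and the hypotheses \eqref{P8-0} are satisfied for $x'$. Fix the $\la\in]\varphi(\max_{1\le i\le n}d(x',\Omega_i)),\de_1[$ provided by the assumption and set $\la':=\min\{\la,\rho\}$, so that $\la'>\varphi(\max_{1\le i\le n}d(x',\Omega_i))$.

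Next, choose $\omega'_i\in\Omega_i$ $(i=1,\ldots,n)$ with $\max_{1\le i\le n}\|\omega'_i-x'\|$ so close to $\max_{1\le i\le n}d(x',\Omega_i)$ that $\varphi(\max_{1\le i\le n}\|\omega'_i-x'\|)$ is still strictly smaller than some $\eps<\la'$. Apply the Ekeland Variational Principle (Lemma~\ref{EVP}) to the lower semicontinuous function
\begin{align*}
\widehat f(u_1,\ldots,u_n,u):=\varphi\Big(\max_{1\le i\le n}\|u_i-u\|\Big)+i_{\Omega_1\times\ldots\times\Omega_n}(u_1,\ldots,u_n)
\end{align*}
on the complete metric space $X^{n+1}$ equipped with the norm \eqref{pnorm}, starting from $(\omega'_1,\ldots,\omega'_n,x')$ and with parameters $\eps$ and $\la'$. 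This yields points $\omega_i\in\Omega_i$ and $x\in X$ with $\|(\omega_1,\ldots,\omega_n,x)-(\omega'_1,\ldots,\omega'_n,x')\|_\ga<\la'\le\la$, with $\varphi(\max_{1\le i\le n}\|\omega_i-x\|)\le\varphi(\max_{1\le i\le n}\|\omega'_i-x'\|)$, and with the Ekeland descent inequality holding with constant $\eps/\la'<1$.

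The first EVP conclusion gives \eqref{P8-3}, and, since $\varphi$ is strictly increasing, the second yields $\max_{1\le i\le n}\|\omega_i-x\|\le\max_{1\le i\le n}\|\omega'_i-x'\|<\varphi\iv(\la)$. To complete \eqref{P8-4}, I need $\max_{1\le i\le n}\|\omega_i-x\|>0$; but if this maximum were zero, then $x\in\cap_{i=1}^n\Omega_i$, while $\|x-x'\|<\la'\le\rho$ would place $x$ in $\cap_{i=1}^n\Omega_i\cap B_\rho(x')$, contradicting its emptiness. Finally, the descent inequality from the EVP contradicts \eqref{P8-1} at the points $(\omega_1,\ldots,\omega_n,x)$, since the supremum in \eqref{P8-1} would then be bounded above by $\eps/\la'<1$.

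The main obstacle I anticipate is the bookkeeping around the two groups of points $(\omega'_i,x')$ and $(\omega_i,x)$: one must pick the initial approximate projection carefully so that monotonicity of $\varphi$ together with the EVP value condition delivers the strict inequality $\max_{1\le i\le n}\|\omega_i-x\|<\varphi\iv(\la)$ (and not merely $\le$), and then verify that the $\la'$ appearing in the Ekeland step can be used in place of $\la$ in conditions \eqref{P8-3}--\eqref{P8-4} without weakening the hypothesis \eqref{P8-1}.
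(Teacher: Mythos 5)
Your proposal is correct and follows essentially the same route as the paper's proof: contraposition, an approximate projection $(\omega'_1,\ldots,\omega'_n)$ chosen so that $\varphi(\max_{1\le i\le n}\|\omega'_i-x'\|)<\eps<\la'\le\min\{\la,\rho\}$, and the Ekeland variational principle applied to $\widehat f$ on $X^{n+1}$ with the $\ga$-norm. Your direct argument that $\max_{1\le i\le n}\|\omega_i-x\|>0$ (via $x\in\cap_{i=1}^n\Omega_i\cap B_\rho(x')=\es$) is a harmless rephrasing of the paper's distance estimate, and the quantifier bookkeeping you flag at the end resolves exactly as you anticipate.
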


The proof below follows the pattern of that of Theorem~\ref{P7}.
It employs
a continuous real-valued function $f:X^{n+1}\to\R_+$ determined by the given function $\varphi\in\mathcal{C}$:
\begin{gather}\label{f}
f(u_1,\ldots,u_n,u):=\varphi\Big(\max\limits_{1\le i\le n} \|u_i-u\|\Big),\quad u_1,\ldots,u_n,u\in X,
\end{gather}
and its restriction to $\Omega_1\times\ldots\times\Omega_n\times X$ given by \eqref{hatf}.
Note that the function \eqref{f} is a particular case of \eqref{f1} corresponding to setting $x_i:=0$ $(i=1,\ldots,n)$.
We provide here the proof of Theorem~\ref{P8} for completeness and to expose the differences in handling the two transversality properties, but we skip the proofs of most of its corollaries.

\begin{proof}
Suppose $\{\Omega_1,\ldots,\Omega_n\}$ is not $\varphi-$subtransversal at $\bx$ with some $\de_1>0$ and ${\de_2>0}$, and
let $\ga>0$ be given.
By Definition~\ref{D1}(ii), there exist a number $\rho\in]0,\de_1[$ and a point $x'\in B_{\de_2}(\bx)$ such that $\varphi\left(\max_{1\le i\le n} d(x',\Omega_i)\right)<\rho$ and $\cap_{i=1}^n\Omega_i\cap B_\rho(x')=\es$.
Hence, $x'\notin\cap_{i=1}^n\Omega_i$ and
\begin{equation*}
0<\varphi\Big(\max_{1\le i\le n}d(x',\Omega_i)\Big)
<\rho\le d\Big(x',\bigcap_{i=1}^n\Omega_i\Big).
\end{equation*}
Let $\la\in]\varphi\left(\max_{1\le i\le n}d(x',\Omega_i)\right),\de_1[$.
Choose numbers $\eps$ and $\la'$ such that \begin{equation*}
\varphi\Big(\max_{1\le i\le n}d(x',\Omega_i)\Big) <\eps<\la'<\min\left\{\la,\rho\right\},
\end{equation*}
and points $\omega'_i\in\Omega_i$ $(i=1,\ldots,n)$ such that
$\varphi\left(\max_{1\le i\le n}\|\omega'_i-x'\|\right)<\eps$.
Let $f$ and $\widehat{f}$ be defined by \eqref{f} and \eqref{hatf}, respectively, while $X^{n+1}$ be equipped with the metric induced by the norm \eqref{pnorm}.
We have $\widehat{f}(\omega'_1,\ldots,\omega'_n,x')<\eps$.
Applying {the \EVP}, we can find points $\omega_i\in\Omega_i$ $(i=1,\ldots,n)$ and $x\in X$ such that
\sloppy
\begin{gather}\label{P8P2}
\|(\omega_1,\ldots,\omega_n,x)-(\omega'_1,\ldots, \omega'_n,x')\|_{\ga}<\lambda',\;f(\omega_1,\ldots,\omega_n,x)\le f(\omega'_1,\ldots,\omega'_n,x'),
\\\label{P8P3}
f(\omega_1,\ldots,\omega_n,x)
-f(u_1,\ldots,u_n,u)\le\frac{\varepsilon}{\lambda'}\|(u_1,\ldots,u_n,u)-(\omega_1,\ldots,\omega_n,x)\|_{\ga}
\end{gather}
for all $(u_1,\ldots,u_n,u)\in \Omega_1\times\ldots\times \Omega_n\times X$.
Thanks to \eqref{P8P2}, we have
$\|x-x'\|<\lambda'$, and consequently,
\begin{gather*}
d\Big(x,\bigcap_{i=1}^n\Omega_i\Big)\ge d\Big(x',\bigcap_{i=1}^n \Omega_i\Big)-\|x-x'\| >d\Big(x',\bigcap_{i=1}^n\Omega_i\Big)-\lambda'>0.
\end{gather*}
Hence, $x\notin\cap_{i=1}^n\Omega_i$, and $\max_{1\le i\le n}\|\omega_i-x\|>0$.
In view of the definitions of $\la'$ and $f$, conditions \eqref{P8P2} together with the last inequality yield \eqref{P8-3} and \eqref{P8-4}.
Since $\eps/\la'<1$, condition \eqref{P8P3} contradicts \eqref{P8-1}.
\qed\end{proof}

\if{
In the H\"older setting, Theorem~\ref{P8} yields the following statement.
\begin{corollary}\label{P9+}
Let $\Omega_1,\ldots,\Omega_n$ be closed subsets of a Banach space $X$, $\bx\in \cap_{i=1}^n\Omega_i,\al>0$ and $q\in]0,1]$.
$\{\Omega_1,\ldots,\Omega_n\}$ is $\al-$subtransversal of order $q$ at $\bx$ with some $\de_1>0$ and $\de_2>0$ if, for some $\ga>0$ and any $x'\in B_{\de_2}(\bx)$ with $0<\max_{1\le i\le n}d(x',\Omega_i)<(\al\de_1)^{\frac{1}{q}}$,
there exists a $\la\in]\al\iv\left(\max_{1\le i\le n} d(x',\Omega_i)\right)^q,\de_1[$ such that
\sloppy
\begin{align}\label{C13-1}
\sup_{\substack{u_i\in\Omega_i\;(i=1,\ldots,n),\;u\in X\\(u_1,\ldots,u_n,u) \ne (\omega_1,\ldots,\omega_n,x)}} \dfrac{\left(\max\limits_{1\le i\le n} \|\omega_i-x\|\right)^q-\left(\max\limits_{1\le i\le n} \|u_i-u\|\right)^q}{\|(u_1,\ldots,u_n,u)-(\omega_1,\ldots,\omega_n,x)\|_{\ga}} \ge\al
\end{align}
for all $x\in{X}$ and $\omega_i,\omega'_i\in\Omega_i$ $(i=1,\ldots,n)$ satisfying \eqref{P8-3} and
\begin{align}\label{C13-2}
0<\max_{1\le i\le n}{\|\omega_i-x\|}\le\max_{1\le i\le n} \|\omega'_i-x'\|<(\al\la)^\frac{1}{q}.
\end{align}
\end{corollary}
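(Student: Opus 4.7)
The plan is to deduce Corollary~\ref{P9+} as an immediate specialization of Theorem~\ref{P8} by choosing $\varphi(t):=\alpha^{-1}t^q$ for all $t\ge 0$. This $\varphi$ belongs to $\mathcal{C}$ (indeed to $\mathcal{C}^1$) for any $\alpha>0$ and $q>0$, and its inverse is given explicitly by $\varphi^{-1}(t)=(\alpha t)^{1/q}$. Although the corollary only claims the result for $q\in]0,1]$, this restriction is not needed for the specialization itself (it is forced upstream by Remark~\ref{R2} so that $\varphi-$subtransversality can hold nontrivially at a boundary point), and I would note this once in passing rather than invoking it in the proof.

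Next, I would rewrite each of the conditions in Theorem~\ref{P8} under this choice. Condition \eqref{P8-0} becomes precisely $\|x'-\bx\|<\de_2$ together with $0<\max_{1\le i\le n}d(x',\Omega_i)<(\al\de_1)^{1/q}$, since $\varphi^{-1}(\de_1)=(\alpha\de_1)^{1/q}$. The interval for $\lambda$ reads $\lambda\in\bigl]\al^{-1}(\max_{1\le i\le n}d(x',\Omega_i))^q,\de_1\bigr[$, matching the statement of the corollary. Condition \eqref{P8-3} is independent of $\varphi$ and transfers verbatim. Condition \eqref{P8-4} becomes
\begin{align*}
0<\max_{1\le i\le n}\|\omega_i-x\|\le\max_{1\le i\le n}\|\omega'_i-x'\|<\varphi^{-1}(\la)=(\al\la)^{1/q},
\end{align*}
which is exactly \eqref{C13-2}.

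Finally, I would handle the slope inequality. Substituting $\varphi(t)=\al^{-1}t^q$ into \eqref{P8-1} yields
\begin{align*}
\sup\frac{\al^{-1}\bigl(\max_i\|\omega_i-x\|\bigr)^q-\al^{-1}\bigl(\max_i\|u_i-u\|\bigr)^q}{\|(u_1,\ldots,u_n,u)-(\omega_1,\ldots,\omega_n,x)\|_\gamma}\ge 1,
\end{align*}
and multiplying both sides by $\al>0$ produces exactly \eqref{C13-1}. Since every hypothesis of Theorem~\ref{P8} has been translated faithfully into the hypotheses of the corollary, the conclusion of Theorem~\ref{P8} --- namely, $\varphi-$subtransversality of $\{\Omega_1,\ldots,\Omega_n\}$ at $\bx$ with $\de_1$ and $\de_2$ --- reads, in this H\"older instance, as $\al-$subtransversality of order $q$ at $\bx$ with the same $\de_1$ and $\de_2$, which is the desired statement.

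There is no genuine obstacle here: the only care required is bookkeeping the two normalizations ($\varphi$ carries the factor $\al^{-1}$ on the left side of \eqref{P8-1}, but the modulus $\al$ appears on the right of \eqref{C13-1}) and the form of $\varphi^{-1}$ in each of the bounds on $\lambda$, $\max d(x',\Omega_i)$, and $\max\|\omega'_i-x'\|$. A one-line closing remark can record that Corollary~\ref{P7.2}-style localized and chain-rule versions of \eqref{C13-1} follow in the same way by specializing Corollary~\ref{P7.2}'s analogue for Theorem~\ref{P8}.
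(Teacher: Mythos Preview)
Your proposal is correct and follows exactly the paper's own approach: the paper's proof is the single sentence ``Both assertions are consequences of the corresponding ones in Theorem~\ref{P8} with $\varphi(t):=\al\iv t^q$ for all $t\ge0$,'' and your write-up simply spells out the bookkeeping (the form of $\varphi^{-1}$ and the rescaling of \eqref{P8-1} by $\al$) in full. Your side remark that the restriction $q\in]0,1]$ is not used in the derivation is accurate and consistent with the paper's handling of the analogous H\"older corollaries.
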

\begin{proof}
Both assertions are consequences of the corresponding ones in Theorem~\ref{P8} with $\varphi(t):=\al\iv t^q$ for all $t\ge0$; cf. the proof of Corollary~\ref{P7+}.
\qed\end{proof}	
}\fi

The next statement is a localized version of Theorem~\ref{P8}.

\begin{corollary}\label{4.1}
\begin{enumerate}
\item
The collection $\{\Omega_1,\ldots,\Omega_n\}$ is $\varphi-$subtransversal at $\bx$ with some $\de_1>0$ and $\de_2>0$ if, for some $\ga>0$ and any $x'\in X$ satisfying \eqref{P8-0},
there exists a $\la\in]\varphi\left(\max_{1\le i\le n} d(x',\Omega_i)\right),\de_1[$ such that
\sloppy
\begin{align}\label{P6-1}
\limsup_{\substack{u_i\stackrel{\Omega_i}{\to}\omega_i\; (i=1,\ldots,n),\;u\to x\\
(u_1,\ldots,u_n,u)\ne(\omega_1,\ldots,\omega_n,x)}} \dfrac{\varphi\Big(\max\limits_{1\le i\le n} \|\omega_i-x\|\Big)-\varphi\Big(\max\limits_{1\le i\le n} \|u_i-u\|\Big)}{\|(u_1,\ldots,u_n,u) -(\omega_1,\ldots,\omega_n,x)\|_{\ga}}\ge1
\end{align}
for all $x\in{X}$ and $\omega_i,\omega'_i\in\Omega_i$ $(i=1,\ldots,n)$ satisfying \eqref{P8-3} and \eqref{P8-4}.

\item
If $\varphi\in\mathcal{C}^1$, then inequality \eqref{P6-1} {in part \rm (i)} can be replaced by
\begin{multline}\label{P6-2}
\varphi'\Big(\max\limits_{1\le i\le n}\|\omega_i-x\|\Big) \\
\times \limsup_{\substack{{u_i}\stackrel{\Omega_i}{\to}\omega_i\; (i=1,\ldots,n),\;u\to x\\
(u_1,\ldots,u_n,u)\ne(\omega_1,\ldots,\omega_n,x)}} \dfrac{\max\limits_{1\le i\le n}\|\omega_i-x\|-\max\limits_{1\le i\le n}\|u_i-u\|}{\|(u_1,\ldots,u_n,u)-(\omega_1,\ldots,\omega_n,x)\|_{\ga}}\ge1.
\end{multline}
\end{enumerate}
\end{corollary}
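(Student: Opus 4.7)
My plan is to derive Corollary~\ref{4.1} from Theorem~\ref{P8}, mirroring the way Corollary~\ref{P7.2} was obtained from Theorem~\ref{P7}. The unifying observation is that the limsup in \eqref{P6-1} is majorized by the local $\ga$-slope of $\widehat{f}$ (built as in \eqref{hatf} from the function \eqref{f}) at the point $(\omega_1,\ldots,\omega_n,x)$, while the sup in \eqref{P8-1} is dominated by the corresponding nonlocal $\ga$-slope. Proposition~\ref{P1.1}(ii) is exactly the bridge between them.

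For part (i), fix a $\ga>0$ for which the hypothesis holds. Given $x'\in X$ satisfying \eqref{P8-0}, pick the $\la$ supplied by the assumption, and take arbitrary $x\in X$ and $\omega_i,\omega'_i\in\Omega_i$ satisfying \eqref{P8-3} and \eqref{P8-4}. Because $[a]_+\ge a$, condition \eqref{P6-1} forces the local $\ga$-slope of $\widehat{f}$ at $(\omega_1,\ldots,\omega_n,x)$ to be at least $1$. Thanks to \eqref{P8-4}, $\widehat{f}(\omega_1,\ldots,\omega_n,x)=\varphi(\max_{1\le i\le n}\|\omega_i-x\|)>0$, so Proposition~\ref{P1.1}(ii) applies and the nonlocal $\ga$-slope at that point dominates the local one. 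Finally, the sup on the left-hand side of \eqref{P8-1} is majorized by this nonlocal slope. Chaining these three inequalities yields \eqref{P8-1}, and Theorem~\ref{P8} delivers the claimed $\varphi-$subtransversality at $\bx$ with $\de_1$ and $\de_2$.

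For part (ii) we invoke the slope chain rule, Lemma~\ref{L2}. Write $\widehat{f}=\varphi\circ g$, where
\[
g(u_1,\ldots,u_n,u):=\max_{1\le i\le n}\|u_i-u\|+i_{\Omega_1\times\ldots\times\Omega_n}(u_1,\ldots,u_n),
\]
on $X^{n+1}$ equipped with the $\ga$-norm \eqref{pnorm}. By \eqref{P8-4}, $g(\omega_1,\ldots,\omega_n,x)>0$, and by hypothesis $\varphi'(g(\omega_1,\ldots,\omega_n,x))>0$. Lemma~\ref{L2} (together with Remark~\ref{R01}(i) to handle the fact that $\varphi$ is originally defined only on $\R_+$) identifies the local $\ga$-slope of $\widehat{f}$ at $(\omega_1,\ldots,\omega_n,x)$ with $\varphi'(\max_{1\le i\le n}\|\omega_i-x\|)$ times the local $\ga$-slope of $g$, which is exactly the left-hand side of \eqref{P6-2}. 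So \eqref{P6-2} matches \eqref{P6-1}, and (ii) reduces to (i). The main technical point to watch will be that Lemma~\ref{L2} is stated for the ordinary slope; however its proof is purely metric and transfers verbatim to the $\ga$-slope induced by the norm \eqref{pnorm} on $X^{n+1}$, so the remaining work is routine bookkeeping with the parametric norm.
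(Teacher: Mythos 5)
Your proof follows exactly the route the paper intends: the paper explicitly skips the proofs of the corollaries of Theorem~\ref{P8}, and the analogous Corollary~\ref{P7.2} is proved by precisely your argument (identify the left-hand sides of \eqref{P6-1} and \eqref{P8-1} with the $\ga$-slope and the nonlocal $\ga$-slope of the function \eqref{hatf} at $(\omega_1,\ldots,\omega_n,x)$, pass from the former to the latter via Proposition~\ref{P1.1}(ii) using $\widehat{f}(\omega_1,\ldots,\omega_n,x)>0$ guaranteed by \eqref{P8-4}, and obtain part (ii) from Lemma~\ref{L2} together with Remark~\ref{R01}(i)). One sentence should be restated, though: you say the sup in \eqref{P8-1} is \emph{majorized by} the nonlocal slope, which points the wrong way for your chain; what you actually need is that, since $\widehat{f}\ge0$, the nonlocal $\ga$-slope equals the positive part of the sup in \eqref{P8-1}, so the two quantities are simultaneously $\ge1$ --- the same ``$[a]_+$ is harmless above a positive threshold'' observation you already invoked for \eqref{P6-1}.
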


In the H\"older setting, Theorem~\ref{P8} and Corollary~\ref{4.1} yield the following statement.
In view of Remark~\ref{R2}, we assume that $q\le1$.

\begin{corollary}\label{C9}
Let ${\al>0}$ and $q\in]0,1]$.
The collection $\{\Omega_1,\ldots,\Omega_n\}$ is $\al-$subtrans\-ver\-sal of order $q$ at $\bx$ with {some} $\de_1>0$ and $\de_2>0$ if, for some $\ga>0$ and any $x'\in B_{\de_2}(\bx)$ with ${0<\max\limits_{1\le i\le n} d(x',\Omega_i)<(\al\de_1)^\frac{1}{q}}$,
there exists a $\la\in\big]\al\iv\big(\max\limits_{1\le i\le n} d(x',\Omega_i)\big)^q,\de_1\big[$ such that
\begin{align}\label{C5.6-1}
\sup_{\substack{u_i\in\Omega_i\;(i=1,\ldots,n),\;u\in X\\ (u_1,\ldots,u_n,u) \ne (\omega_1,\ldots,\omega_n,x)}} \dfrac{\Big(\max\limits_{1\le i\le n} \|\omega_i-x\|\Big)^q-\Big(\max\limits_{1\le i\le n} \|u_i-u\|\Big)^q}{\|(u_1,\ldots,u_n,u) -(\omega_1,\ldots,\omega_n,x)\|_{\ga}}\ge\al,
\end{align}	
for all $x\in{X}$ and ${\omega_i,\omega'_i\in\Omega_i}$ $(i=1,\ldots,n)$ satisfying \eqref{P8-3} and
\begin{align*}
0<\max_{1\le i\le n}{\|\omega_i-x\|}\le\max_{1\le i\le n} \|\omega'_i-x'\|<(\al\la)^\frac{1}{q},
\end{align*}
or all the more, such that
\begin{multline}\label{C5.6-2}
q\Big(\max\limits_{1\le i\le n} \|\omega_i-x\|\Big)^{q-1}\hspace{-2mm}
\\
\times\limsup_{\substack{{u_i}\stackrel{\Omega_i}{\to}\omega_i\; (i=1,\ldots,n),\;u\to x\\
(u_1,\ldots,u_n,u)\ne(\omega_1,\ldots,\omega_n,x)}} \dfrac{\max\limits_{1\le i\le n}\|\omega_i-x\|-\max\limits_{1\le i\le n}\|u_i-u\|}{\|(u_1,\ldots,u_n,u)-(\omega_1,\ldots,\omega_n,x)\|_{\ga}}\ge \al.
\end{multline}
\end{corollary}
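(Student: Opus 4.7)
The plan is to derive Corollary~\ref{C9} as a direct specialization of Theorem~\ref{P8} and Corollary~\ref{4.1} by taking $\varphi(t):=\alpha^{-1}t^{q}$ for all $t\ge 0$. This $\varphi$ manifestly belongs to $\mathcal{C}$ (and to $\mathcal{C}^1$ since $q>0$), with inverse $\varphi^{-1}(t)=(\alpha t)^{1/q}$ and derivative $\varphi'(t)=q\alpha^{-1}t^{q-1}$ for $t>0$. In this setting, Definition~\ref{D1}(ii) collapses to Definition~\ref{D0}(ii) (the Hölder case), so $\varphi$-subtransversality at $\bar x$ with parameters $\delta_1,\delta_2$ is exactly $\alpha$-subtransversality of order $q$ with the same parameters.

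First, I would rewrite each quantifier and each inequality of Theorem~\ref{P8} with this $\varphi$. The condition $\varphi(\max_i d(x',\Omega_i))<\delta_1$ translates to $\max_i d(x',\Omega_i)<(\alpha\delta_1)^{1/q}$, and the interval $\bigl]\varphi(\max_i d(x',\Omega_i)),\delta_1\bigr[$ becomes $\bigl]\alpha^{-1}(\max_i d(x',\Omega_i))^q,\delta_1\bigr[$. The constraint $\max_i\|\omega'_i-x'\|<\varphi^{-1}(\lambda)$ becomes $\max_i\|\omega'_i-x'\|<(\alpha\lambda)^{1/q}$. Finally, the slope inequality \eqref{P8-1} reads
\begin{align*}
\sup_{\substack{u_i\in\Omega_i,\,u\in X\\ (u_1,\ldots,u_n,u)\ne(\omega_1,\ldots,\omega_n,x)}}\dfrac{\alpha^{-1}(\max_i\|\omega_i-x\|)^{q} -\alpha^{-1}(\max_i\|u_i-u\|)^{q}}{\|(u_1,\ldots,u_n,u)-(\omega_1,\ldots,\omega_n,x)\|_\gamma}\ge 1,
\end{align*}
which after multiplying both sides by $\alpha>0$ yields \eqref{C5.6-1}. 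Together with \eqref{P8-3}, these are precisely the hypotheses of the first part of Corollary~\ref{C9}, so Theorem~\ref{P8} delivers the conclusion.

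For the second (localized, derivative-based) sufficient condition, I would invoke Corollary~\ref{4.1}(ii), whose hypothesis \eqref{P6-2} specializes, via $\varphi'(t)=q\alpha^{-1}t^{q-1}$, to
\begin{align*}
q\alpha^{-1}\Bigl(\max_{1\le i\le n}\|\omega_i-x\|\Bigr)^{q-1}\cdot \limsup_{\substack{u_i\stackrel{\Omega_i}{\to}\omega_i,\,u\to x\\ (u_1,\ldots,u_n,u)\ne(\omega_1,\ldots,\omega_n,x)}}\dfrac{\max_i\|\omega_i-x\|-\max_i\|u_i-u\|}{\|(u_1,\ldots,u_n,u)-(\omega_1,\ldots,\omega_n,x)\|_\gamma}\ge 1.
\end{align*}
Multiplying through by $\alpha$ yields precisely \eqref{C5.6-2}. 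The ``all the more'' wording is justified because, in the proof of Corollary~\ref{4.1}(ii), one uses Lemma~\ref{L2} to pass from the nonlocal slope estimate \eqref{P8-1} to its derivative-based local counterpart; that same Lemma~\ref{L2} (applied with $\varphi(t)=\alpha^{-1}t^q$, which is differentiable with strictly positive derivative on $]0,+\infty[$) shows that \eqref{C5.6-2} implies \eqref{C5.6-1}.

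There is no real obstacle here: the entire content of the corollary lies in the earlier Theorem~\ref{P8} and Corollary~\ref{4.1}. The only thing to be careful about is bookkeeping of constants—specifically, that the multiplicative factor $\alpha$ threads consistently through the $\varphi$, $\varphi^{-1}$, and $\varphi'$ substitutions, and that the Hölder exponent $q\in]0,1]$ (required here in view of Remark~\ref{R2}) is compatible with the general Definition~\ref{D1}(ii) without restriction on $q$ assumed in Theorem~\ref{P8}. Once these substitutions are made, the proof is complete.
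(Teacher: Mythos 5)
Your proposal is correct and follows exactly the paper's route: Corollary~\ref{C9} is obtained by substituting $\varphi(t):=\al\iv t^q$ (so $\varphi\iv(t)=(\al t)^{1/q}$ and $\varphi'(t)=q\al\iv t^{q-1}$) into Theorem~\ref{P8} and Corollary~\ref{4.1} and clearing the factor $\al$. The only cosmetic quibble is that the implication from the localized condition \eqref{C5.6-2} to \eqref{C5.6-1} rests on Proposition~\ref{P1.1}(ii) (local slope $\le$ nonlocal slope) in addition to the chain rule of Lemma~\ref{L2}, but this does not affect the validity of the argument.
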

\if{
\begin{proof}
The assertion is a direct consequence of Theorem~\ref{P8} and Corollary~\ref{4.1} with $\varphi(t):=\al\iv t^q$ for all $t\ge0$; then of course, $\varphi\iv(t)=(\al t)^{\frac{1}{q}}$.
\qed\end{proof}	
}\fi

\begin{remark}\label{R10}
\begin{enumerate}
\item
The expressions in the left-hand sides of \eqref{P8-1} and \eqref{P6-1} are, respectively, the nonlocal $\ga$-slope {and} the $\ga$-slope
at $(\omega_1,\ldots,\omega_n,x)$
of the function \eqref{hatf}.

\item
Under the conditions of Theorem~\ref{P8}, there are two ways for estimating $\|\omega_i-\bx\|$:
\begin{align*}
\|\omega_i-\bx\|&\le\|x'-\bx\|+\|\omega_i-\omega'_i\|+\|\omega'_i-x'\| <\de_2+\la/\ga+\varphi\iv(\la),\\
\|\omega_i-\bx\|&\le\|x-\bx\|+\|\omega_i-x\|\\
&\le\|x'-\bx\|+\|x-x'\|+\max_{1\le i\le n}\|\omega'_i-x'\|<\de_2+\la+\varphi\iv(\la).
\end{align*}
The second estimate does not involve $\ga$ and is better than the first one when $\ga<1$.
A similar observation can be made about Corollary~\ref{C5}.

\item
It can be observed from the proof of Theorem~\ref{P8} that the sufficient conditions {for} $\varphi-$sub\-transversality can be strengthened by adding another restriction on the choice of $x'$: $\varphi\left(\max_{1\le i\le n}d(x',\Omega_i)\right)
<d\left(x',\cap_{i=1}^n\Omega_i\right)$.

\item
The statement of Theorem~\ref{P8} and its corollaries can be simplified by dropping condition \eqref{P8-4}.

\item
Inequalities \eqref{P8-1}, \eqref{P6-1}--\eqref{C5.6-2}, which are crucial for checking nonlinear subtransversality, involve points $x\in{X}$ and {$\omega_i\in\Omega_i$} near $\bx$.
Their distance from $\bx$ is determined in Theorem~\ref{P8} via other points: $x'\notin\cap_{i=1}^n \Omega_i$ and {$\omega'_i\in\Omega_i$}; cf. conditions \eqref{P8-3} and \eqref{P8-4}.
Only the distance from $x'$ to $\bx$ and to the sets {$\Omega_i$} is directly controlled by the values of $\de_1$ and $\de_2$ in the definition of $\varphi-$subtransversality: $x'\in B_{\de_2}(\bx)$ and $\varphi\left(\max_{1\le i\le n} d(x',\Omega_i)\right)<\de_1$.
All the other distances are controlled by $\la$, which can be made arbitrarily close to $\varphi\left(\max_{1\le i\le n}d(x',\Omega_i)\right)$.

\item
In view of the definition of the parametric norm \eqref{pnorm}, if any of the inequalities \eqref{P8-1}, \eqref{P6-1}--\eqref{C5.6-2} holds true for some $\gamma>0$, then it also holds for any $\ga'\in]0,\ga[$.

\item
Corollary~\ref{C9} strengthens \cite[Proposition~6]{KruTha14}.
In the linear case, it {improves} \cite[Proposition~10]{KruLukTha17}.
\end{enumerate}
\end{remark}

The next corollary provides a simplified (and weaker!) version of Theorem~\ref{P8}; cf.
Remark~\ref{R10}(v).

\begin{corollary}\label{C5}
The collection $\{\Omega_1,\ldots,\Omega_n\}$ is $\varphi-$subtransversal at $\bx$ with some ${\de_1>0}$ and $\de_2>0$ {if, for some $\ga>0$,}
inequality \eqref{P8-1} holds
for all $x\in{B}_{\de_1+\de_2}(\bx)$ and $\omega_i\in\Omega_i\cap {B}_{\de_2+\de_1/\ga+\varphi\iv(\delta_1)}(\bx)$ $(i=1,\ldots,n)$ satisfying
$0<\max_{1\le i\le n}{\|\omega_i-x\|} <\varphi\iv(\delta_1)$.
\sloppy
\end{corollary}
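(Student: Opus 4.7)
The plan is to show that the simplified hypothesis of Corollary~\ref{C5} already implies the more technical hypothesis of Theorem~\ref{P8}, so that $\varphi-$subtransversality with the given $\de_1$ and $\de_2$ follows immediately. In other words, the balls ${B}_{\de_1+\de_2}(\bx)$ and ${B}_{\de_2+\de_1/\ga+\varphi\iv(\de_1)}(\bx)$ are constructed so as to contain \emph{all} candidates for the points $x$ and $\omega_i$ that can ever arise in the hypothesis of Theorem~\ref{P8}, and the scalar bound $\varphi\iv(\de_1)$ dominates every value of $\varphi\iv(\la)$ that can appear there.

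Concretely, I would fix arbitrary data $x'\in X$ satisfying \eqref{P8-0}, a number $\la\in\,]\varphi(\max_i d(x',\Omega_i)),\de_1[$, and points $x\in X$, $\omega_i,\omega_i'\in\Omega_i$ satisfying \eqref{P8-3}--\eqref{P8-4}, and then simply estimate the distances from $\bx$. From \eqref{P8-0} and \eqref{P8-3},
\begin{equation*}
\|x-\bx\|\le\|x-x'\|+\|x'-\bx\|<\la+\de_2<\de_1+\de_2,
\end{equation*}
so $x\in B_{\de_1+\de_2}(\bx)$. From \eqref{P8-3}--\eqref{P8-4} together with \eqref{P8-0},
\begin{equation*}
\|\omega_i-\bx\|\le\|\omega_i-\omega_i'\|+\|\omega_i'-x'\|+\|x'-\bx\|<\frac{\la}{\ga}+\varphi\iv(\la)+\de_2\le\frac{\de_1}{\ga}+\varphi\iv(\de_1)+\de_2,
\end{equation*}
using the monotonicity of $\varphi\iv$. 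Thus $\omega_i\in\Omega_i\cap B_{\de_2+\de_1/\ga+\varphi\iv(\de_1)}(\bx)$ for every $i$. Finally \eqref{P8-4} yields $0<\max_i\|\omega_i-x\|\le\max_i\|\omega_i'-x'\|<\varphi\iv(\la)<\varphi\iv(\de_1)$.

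Hence $x$ and $(\omega_1,\ldots,\omega_n)$ lie in the sets prescribed by the hypothesis of Corollary~\ref{C5}, and that hypothesis therefore delivers inequality \eqref{P8-1}. The assumptions of Theorem~\ref{P8} are thus verified, and Theorem~\ref{P8} gives $\varphi-$subtransversality of $\{\Omega_1,\ldots,\Omega_n\}$ at $\bx$ with $\de_1$ and $\de_2$. No step requires any serious effort; the only thing to be careful about is collecting the triangle inequalities in the right order and invoking monotonicity of $\varphi\iv$ to replace $\la$ by $\de_1$ in the final ball radii.
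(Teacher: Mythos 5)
Your proposal is correct and follows essentially the same route as the paper's own proof: fix the data of Theorem~\ref{P8}, apply the triangle inequality to show that $x$ and the $\omega_i$ land in the prescribed balls and that $\max_i\|\omega_i-x\|<\varphi\iv(\de_1)$, then invoke the hypothesis of the corollary to get \eqref{P8-1} and conclude via Theorem~\ref{P8}. The estimates you write are the same ones the paper uses, so there is nothing to add.
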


\begin{proof}
Let $\de_1>0$ and $\de_2>0$, $x'\in B_{\de_2}(\bx)\setminus\cap_{i=1}^n \Omega_i$, $\la\in]\varphi\left(\max_{1\le i\le n} d(x',\Omega_i)\right),\de_1[$, and points $x\in{X}$ and $\omega_i,\omega'_i\in\Omega_i$ $(i=1,\ldots,n)$ satisfy conditions \eqref{P8-3} and \eqref{P8-4}.
Then
\sloppy
\begin{align*}
\|x-\bx\|&\le\|x-x'\|+\|x'-\bx\|<\la+\de_2<\de_1+\de_2,\\
\|\omega_i-\bx\|&\le\|x'-\bx\|+\|\omega_i-\omega'_i\|+\|\omega'_i-x'\|\\
&<\de_2+\la/\ga+\varphi\iv(\la) <\de_2+\de_1/\ga+\varphi\iv(\delta_1),\\
\|\omega_i-x\|&<\varphi\iv(\la)<\varphi\iv(\delta_1),
\end{align*}
i.e. points $x\in{X}$ and $\omega_i\in\Omega_i$ $(i=1,\ldots,n)$ satisfy all the conditions in the corollary.
Hence, inequality \eqref{P8-1} holds.
It follows from Theorem~\ref{P8} that $\{\Omega_1,\ldots,\Omega_n\}$ is $\varphi-$subtransversal at $\bx$ with $\de_1$ and $\de_2$.
\qed\end{proof}

Sacrificing the estimates for $\de_1$ and $\de_2$ in Theorem~\ref{P8}, and Corollaries~\ref{4.1} and
\ref{C5}, we can formulate the following `$\de$-free' statement.

\begin{corollary}\label{C2}
The collection $\{\Omega_1,\ldots,\Omega_n\}$ is $\varphi-$subtransversal at $\bx$ if inequality \eqref{P8-1} holds true
for some ${\ga>0}$ and all $x\in{X}$ near $\bx$ and $\omega_i\in\Omega_i$ $(i=1,\ldots,n)$ near $\bx$ satisfying
$\max_{1\le i\le n}{\|\omega_i-x\|}>0$.
Moreover,
inequality \eqref{P8-1} can be replaced by its localized version \eqref{P6-1}, or by \eqref{P6-2} if $\varphi\in\mathcal{C}^1$.
\end{corollary}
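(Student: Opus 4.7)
The plan is to derive Corollary~\ref{C2} as a straightforward reduction to Theorem~\ref{P8} (for the nonlocal inequality \eqref{P8-1}) and to Corollary~\ref{4.1} (for the localized versions \eqref{P6-1} and \eqref{P6-2}). The hypothesis of Corollary~\ref{C2} only requires the slope inequality to hold for points sufficiently close to $\bx$, so I will choose the parameters $\de_1,\de_2>0$ of Theorem~\ref{P8} small enough that its entire quantifier hierarchy --- a test point $x'\in B_{\de_2}(\bx)$, an intermediate radius $\la<\de_1$, and the derived points $x,\omega_i$ --- keeps both $x$ and every $\omega_i$ inside the prescribed neighbourhood of $\bx$.

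Concretely, suppose inequality \eqref{P8-1} holds for some $\ga>0$ whenever $x\in B_\eta(\bx)$, $\omega_i\in\Omega_i\cap B_\eta(\bx)$ $(i=1,\ldots,n)$ and $\max_{1\le i\le n}\|\omega_i-x\|>0$, for some $\eta>0$. Using that $\varphi\in\mathcal{C}$ is continuous with $\varphi(0)=0$, I pick $\de_2\in ]0,\eta/3[$ and $\de_1\in ]0,\eta/3[$ small enough that $\varphi\iv(\de_1)<\eta/3$. Then, for any $x'$ satisfying \eqref{P8-0}, any $\la\in]\varphi(\max_{1\le i\le n}d(x',\Omega_i)),\de_1[$, and any $x\in X$, $\omega_i,\omega'_i\in\Omega_i$ $(i=1,\ldots,n)$ satisfying \eqref{P8-3} and \eqref{P8-4}, the triangle inequality and \eqref{P8-4} yield
\begin{align*}
\|x-\bx\|&\le\|x-x'\|+\|x'-\bx\|<\la+\de_2<\de_1+\de_2<\eta, \\
\|\omega_i-\bx\|&\le\|\omega_i-x\|+\|x-\bx\|<\varphi\iv(\la)+\la+\de_2<\varphi\iv(\de_1)+\de_1+\de_2<\eta,
\end{align*}
and \eqref{P8-4} ensures $\max_{1\le i\le n}\|\omega_i-x\|>0$. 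Hence the assumed slope inequality is available at $(\omega_1,\ldots,\omega_n,x)$, and Theorem~\ref{P8} delivers $\varphi-$subtransversality of $\{\Omega_1,\ldots,\Omega_n\}$ at $\bx$ with these $\de_1,\de_2$.

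The two ``moreover'' assertions follow by the same reduction, with Corollary~\ref{4.1}(i) in place of Theorem~\ref{P8} when \eqref{P8-1} is weakened to its localized form \eqref{P6-1}, and with Corollary~\ref{4.1}(ii) (which invokes the chain rule of Lemma~\ref{L2}) when $\varphi\in\mathcal{C}^1$ and one uses the factored inequality \eqref{P6-2}. There is no real obstacle: the analytic content is carried entirely by the Ekeland-based argument already embedded in Theorem~\ref{P8} and by Lemma~\ref{L2}. The present step is purely quantifier bookkeeping, and the only nuance worth flagging is the combined upper bound $\varphi\iv(\de_1)+\de_1+\de_2$ on $\|\omega_i-\bx\|$, which must be kept smaller than $\eta$ --- this is the reason for the simultaneous smallness requirement on $\de_1$ and $\varphi\iv(\de_1)$.
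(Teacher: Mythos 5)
Your argument is correct and is essentially the paper's own route: the paper derives Corollary~\ref{C2} by exactly this kind of quantifier bookkeeping from Theorem~\ref{P8} and Corollaries~\ref{4.1} and~\ref{C5}, and your bound $\|\omega_i-\bx\|<\varphi\iv(\la)+\la+\de_2$ is the same $\ga$-free estimate noted in Remark~\ref{R10}(ii). The choice of $\de_1,\de_2$ with $\varphi\iv(\de_1)$ also small (legitimate since $\varphi\iv\in\mathcal{C}$ is continuous at $0$) correctly forces all tested points into the prescribed neighbourhood, so nothing is missing.
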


\subsection{Transversality}\label{S5.3}

Since $\varphi-$transversality is in a sense an overarching property covering both $\varphi-$semi\-trans\-versality and $\varphi-$subtransversality (see Proposition~\ref{pro1}(iii)), the next theorem contains some elements of both Theorems~\ref{P7} and \ref{P8}, {and} its proof goes along the same lines.
Similar to the proof of Theorem~\ref{P7},
it employs functions \eqref{f1} and~\eqref{hatf}.

\begin{theorem}\label{P9}
The collection $\{\Omega_1,\ldots,\Omega_n\}$ is $\varphi-$transversal at $\bx$ with some ${\de_1>0}$ and $\de_2>0$ if, for some $\ga>0$ and any $\omega'_i\in\Omega_i\cap B_{\delta_2}(\bar{x})$ $(i=1,\ldots,n)$ and ${\xi\in]0,\varphi\iv(\de_1)[}$, there exists a $\la\in]\varphi(\xi),\de_1[$ such that
inequality \eqref{P7-1} holds
for all $x,x_i\in{X}$ and $\omega_i\in\Omega_i$ $(i=1,\ldots,n)$ satisfying
\sloppy
\begin{gather}
\label{P11-1}
\|x-\bx\|<\la,\quad
\max_{1\le i\le n}\|\omega_i-\omega'_i\|<\frac{\la}{\ga},
\\\label{P11-3}
0<\max_{1\le i\le n}{\|\omega_i-x_i-x\|}\le\max_{1\le i\le n} \|\omega'_{i}-x_i-\bx\|=\xi.
\end{gather}
\if{
\AK{25/04/19.
Could $\xi$ be eliminated from the statement without making it weaker?}
\NDC{30.4.19
Yes, I think $\xi$ can be equivalently dropped. In this case, I think part (i) can be rewritten in the form that:
`$\{\Omega_1,\ldots,\Omega_n\}$ is $\varphi-$transversal at $\bx$ with some $\de_1>0$ and $\de_2>0$ if, for some $\ga>0$ and any $\omega'_i\in\Omega_i\cap B_{\delta_2}(\bar{x})$, $x_i\in X$ $(i=1,\ldots,n)$ with
$0<\max_{1\le i\le n}\|\omega'_{i}-x_i-\bx\|<\varphi\iv(\de_1)$, there exists a $\la\in]\varphi(\max_{1\le i\le n}\|\omega'_{i}-x_i-\bx\|),\de_1[$ such that
inequality \eqref{P7-1} holds
for all $x\in{X}$ and $\omega_i\in\Omega_i$ $(i=1,\ldots,n)$ satisfying $\ldots$'.}
\AK{4/05/19.
I am not sure about the equivalence.
Your green sufficient condition looks stronger (although still meaningful; it could possibly make sense to formulate it as a corollary).
The above statement allows choosing $\xi$ arbitrarily small and, thanks to \eqref{P11-3}, reducing the set of $x,x_i\in{X}$ and $\omega_i\in\Omega_i$ $(i=1,\ldots,n)$ to be tested in \eqref{P7-1}.}
\NDC{8.5.19
I think it would be good to add it as corollary together with your comment about the role of $\xi$.
}
}\fi
\end{theorem}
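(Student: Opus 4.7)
I would argue by contrapositive, following closely the pattern of Theorems~\ref{P7} and~\ref{P8} and reusing the function~\eqref{f1} together with the Ekeland setup of Theorem~\ref{P7}. The guiding idea is that $\varphi-$transversality is, in essence, $\varphi-$semitransversality of each translated collection $\{\Omega_i-\omega'_i+\bx\}$, uniformly over admissible $\omega'_i$; so, after a suitable change of variable, the proof of Theorem~\ref{P7} can be lifted almost verbatim.

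Concretely, suppose $\{\Omega_1,\ldots,\Omega_n\}$ is not $\varphi-$transversal at $\bx$ with $\de_1,\de_2>0$, and let $\ga>0$ be arbitrary. By Definition~\ref{D1}(iii) one obtains $\rho\in]0,\de_1[$, $\omega''_i\in\Omega_i\cap B_{\de_2}(\bx)$ and $x_i\in X$ with $\varphi(\max_{1\le i\le n}\|x_i\|)<\rho$ such that $\bigcap_{i=1}^n(\Omega_i-\omega''_i-x_i)\cap(\rho\B)=\es$. Setting $\tilde x_i:=\omega''_i+x_i-\bx$, this emptiness rewrites as $\bigcap_{i=1}^n(\Omega_i-\tilde x_i)\cap B_\rho(\bx)=\es$, exactly the situation handled in the proof of Theorem~\ref{P7}. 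I would then take $\omega'_i:=\omega''_i$ and $\xi:=\max_{1\le i\le n}\|x_i\|$; a direct computation gives $\max_{1\le i\le n}\|\omega'_i-\tilde x_i-\bx\|=\xi$, and $\xi\in]0,\varphi\iv(\de_1)[$ (positivity is automatic since $x_i=0$ for all $i$ would place $0$ in the forbidden intersection). These choices provide the $\omega'_i$ and $\xi$ needed to witness the failure of the hypothesis.

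With $\omega'_i$ and $\xi$ fixed, for an arbitrary $\la\in]\varphi(\xi),\de_1[$ I would pick $\eps$ and $\la'$ with $\varphi(\xi)<\eps<\la'<\min\{\la,\rho\}$, equip $X^{n+1}$ with the norm~\eqref{pnorm}, and apply the Ekeland variational principle (Lemma~\ref{EVP}) to $\widehat f=f+i_{\Omega_1\times\cdots\times\Omega_n}$, where $f$ is defined by~\eqref{f1} with $\tilde x_i$ replacing $x_i$, starting at the point $(\omega'_1,\ldots,\omega'_n,\bx)$ at which $\widehat f=\varphi(\xi)<\eps$. The output $(\omega_1,\ldots,\omega_n,x)\in\Omega_1\times\cdots\times\Omega_n\times X$ then automatically satisfies~\eqref{P11-1} via the first Ekeland condition and the fact $\la'\le\la$; the second condition, combined with the monotonicity of $\varphi$ and the strict positivity of $\max_{1\le i\le n}\|\omega_i-\tilde x_i-x\|$ forced by the emptiness (because $x\in B_\rho(\bx)$), yields~\eqref{P11-3} with $\tilde x_i$ in the role of $x_i$; and the third Ekeland condition bounds the nonlocal $\ga-$slope of $\widehat f$ at $(\omega_1,\ldots,\omega_n,x)$ by $\eps/\la'<1$, directly contradicting~\eqref{P7-1}.

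The only delicate point, and the place where the argument departs from the semitransversality case, is the construction of $\omega'_i$ and $\xi$ from the failure data: the translation $\tilde x_i:=\omega''_i+x_i-\bx$ simultaneously converts the centred ball $\rho\B$ of Definition~\ref{D1}(iii) into the neighbourhood $B_\rho(\bx)$ needed for the Ekeland starting point, and keeps $\xi=\max_{1\le i\le n}\|x_i\|$ inside the admissible range $]0,\varphi\iv(\de_1)[$. Everything else parallels the proof of Theorem~\ref{P7} line for line.
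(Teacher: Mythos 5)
Your proposal is correct and follows essentially the same route as the paper's proof: the same contrapositive argument, the same change of variable $\tilde x_i:=\omega''_i+x_i-\bx$ converting the failure of \eqref{D0-3} into $\bigcap_{i=1}^n(\Omega_i-\tilde x_i)\cap B_\rho(\bx)=\es$, the same choice $\xi:=\max_{1\le i\le n}\|x_i\|$, and the same application of the Ekeland variational principle to $\widehat f$ starting from $(\omega'_1,\ldots,\omega'_n,\bx)$. The only (immaterial) deviations are notational and the choice $\la'<\min\{\la,\rho\}$ instead of $\la':=\min\{\la,\rho\}$.
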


\begin{proof}
Suppose $\{\Omega_1,\ldots,\Omega_n\}$ is not $\varphi-$transversal at $\bx$ with some $\de_1>0$ and ${\de_2>0}$, and let $\ga>0$ be given.
By Definition~\ref{D1}(iii), there exist a number $\rho\in]0,\delta_1[$ and points $\omega'_i\in\Omega_i \cap B_{\delta_2}(\bar{x})$ and $x'_i\in{X}$ {$(i=1,\ldots,n)$} with ${{\varphi(\max_{1\le i\le n}\|x'_i\|)<\rho}}$ such that
${\cap_{i=1}^n(\Omega_i-\omega'_i-x'_i) \cap(\rho\B)=\emptyset}$.
Thus, $\xi:=\max_{1\le i\le n}\|x'_i\|>0$ and
${\xi<\varphi\iv(\rho)<\varphi\iv(\de_1)}$.
Set $x_i:=\omega'_i+x'_i-\bx$ ${(i=1,\ldots,n)}$.
Then
\begin{align*}
\max_{1\le i\le n}\|\omega'_{i}-x_i-\bx\|=\max_{1\le i\le n}\|x'_i\|=\xi.
\end{align*}
Let $\la\in]\varphi(\xi),\de_1[$ and $\la':=\min\{\la,\rho\}$.
Then $\cap_{i=1}^n(\Omega_i-x_i)\cap B_{\la'}(\bx)=\emptyset$, and consequently, condition \eqref{P7P01} holds true.
Let $f$ and $\widehat{f}$ be defined by \eqref{f1} and \eqref{hatf}, respectively, while $X^{n+1}$ be equipped with the metric induced by the norm \eqref{pnorm}.
We have $\widehat{f}(\omega'_1,\ldots,\omega'_n,\bx) =\varphi\left(\max_{1\le i\le n}\|x'_i\|\right)=\varphi\left(\xi\right)<\la'$.
Choose a number $\eps$ such that
$\widehat{f}(\omega'_1,\ldots,\omega'_n,\bx)<\eps<\la'$.
Applying {the \EVP}, we can find points $\omega_i\in\Omega_i$ $(i=1,\ldots,n)$ and $x\in X$ such that
\sloppy
\begin{gather}\label{P9P02}
\|(\omega_1,\ldots,\omega_n,x)-(\omega'_1,\ldots,\omega'_n,\bx)\|_{\ga}<\la',\;\;
f(\omega_1,\ldots,\omega_n,x)\le f(\omega'_1,\ldots,\omega'_n,\bx),
\end{gather}
and condition \eqref{P7P03} holds
for all $u\in X$ and $u_i\in\Omega_i$ $(i=1,\ldots,n)$.
In view of \eqref{P7P01} and the definitions of $\la'$ and $f$, conditions \eqref{P9P02} yield \eqref{P11-1} and \eqref{P11-3}.
Since $\eps/\la'<1$, condition \eqref{P7P03} contradicts \eqref{P7-1}.
\qed\end{proof}

The next statement is a localized version of Theorem~\ref{P9}.

\begin{corollary}\label{P12}
\begin{enumerate}
\item
The collection $\{\Omega_1,\ldots,\Omega_n\}$ is $\varphi-$transversal at $\bx$ with some ${\de_1>0}$ and $\de_2>0$ if, for some $\ga>0$ and any $\omega'_i\in\Omega_i\cap B_{\delta_2}(\bar{x})$ $(i=1,\ldots,n)$ and ${\xi\in]0,\varphi\iv(\de_1)[}$, there exists a ${\la\in]\varphi(\xi),\de_1[}$ such that
inequality \eqref{P7.2-1} holds for all $x,x_i\in{X}$ and $\omega_i\in\Omega_i$ ${(i=1,\ldots,n)}$ satisfying
\eqref{P11-1} and \eqref{P11-3}.
\sloppy

\item
If $\varphi\in\mathcal{C}^1$, then {inequality \eqref{P7.2-1} in part \rm (i)} can be replaced by \eqref{C6-2}.
\end{enumerate}
\end{corollary}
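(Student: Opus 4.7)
The plan is to deduce Corollary~\ref{P12} from Theorem~\ref{P9} along exactly the same lines as Corollary~\ref{P7.2} was deduced from Theorem~\ref{P7}. The only work is to compare the nonlocal slope appearing in~\eqref{P7-1} with the local slope in~\eqref{P7.2-1}, and (for part~(ii)) to apply the chain rule for slopes from Lemma~\ref{L2} to the composition with $\varphi$.

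For part~(i), fix a choice of $\omega'_i\in\Omega_i\cap B_{\de_2}(\bx)$, $\xi\in]0,\varphi\iv(\de_1)[$ and points $x,x_i\in X$, $\omega_i\in\Omega_i$ satisfying \eqref{P11-1}--\eqref{P11-3}. Consider the function $\widehat f$ defined by \eqref{f1}--\eqref{hatf} on $X^{n+1}$ equipped with the parametric norm \eqref{pnorm}. The expressions in the left-hand sides of \eqref{P7-1} and \eqref{P7.2-1} are, respectively, the nonlocal $\ga$-slope $|\nabla\widehat f|^\diamond_\ga(\omega_1,\ldots,\omega_n,x)$ and the $\ga$-slope $|\nabla\widehat f|_\ga(\omega_1,\ldots,\omega_n,x)$. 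Condition \eqref{P11-3} ensures that $\widehat f(\omega_1,\ldots,\omega_n,x)=\varphi(\max_{1\le i\le n}\|\omega_i-x_i-x\|)>0$, so by Proposition~\ref{P1.1}(ii) (applied in the metric space $(X^{n+1},\|\cdot\|_\ga)$) the local slope is bounded above by the nonlocal slope. Consequently, \eqref{P7.2-1} implies \eqref{P7-1}, and the conclusion follows from Theorem~\ref{P9}.

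For part~(ii), assume in addition $\varphi\in\mathcal{C}^1$. Write $\widehat f=\varphi\circ g+i_{\Omega_1\times\ldots\times\Omega_n}$ with $g(u_1,\ldots,u_n,u):=\max_{1\le i\le n}\|u_i-x_i-u\|$. Again by \eqref{P11-3} we have $g(\omega_1,\ldots,\omega_n,x)>0$, and since $\varphi\in\mathcal{C}^1$, $\varphi$ is nondecreasing with $\varphi'(g(\omega_1,\ldots,\omega_n,x))>0$. Applying Lemma~\ref{L2} (together with Remark~\ref{R01}(i) to localize to a neighbourhood of $g(\omega_1,\ldots,\omega_n,x)$, since we only need differentiability of $\varphi$ on $]0,+\infty[$), and restricting to the product $\Omega_1\times\ldots\times\Omega_n\times X$ in the limit, we obtain
\[
|\nabla\widehat f|_\ga(\omega_1,\ldots,\omega_n,x)=\varphi'\bigl(g(\omega_1,\ldots,\omega_n,x)\bigr)\,|\nabla(g+i_{\Omega_1\times\ldots\times\Omega_n})|_\ga(\omega_1,\ldots,\omega_n,x),
\]
which is precisely the left-hand side of \eqref{C6-2}. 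Hence \eqref{C6-2} implies \eqref{P7.2-1}, and part~(i) yields the conclusion.

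There is no real obstacle: both steps are direct analogues of the semitransversality case treated in Corollary~\ref{P7.2}, the crucial point being that the positivity of $\max_{1\le i\le n}\|\omega_i-x_i-x\|$ in \eqref{P11-3} is exactly what is needed to activate both Proposition~\ref{P1.1}(ii) and the chain rule of Lemma~\ref{L2} at the reference point.
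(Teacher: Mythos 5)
Your argument is correct and is exactly the route the paper intends: Corollary~\ref{P12} is deduced from Theorem~\ref{P9} by observing that the left-hand sides of \eqref{P7.2-1} and \eqref{C6-2} are the $\ga$-slope (respectively, its chain-rule reformulation via Lemma~\ref{L2} and Remark~\ref{R01}(i)) of the function \eqref{hatf}, which is dominated by the nonlocal $\ga$-slope in \eqref{P7-1} thanks to Proposition~\ref{P1.1}(ii) and the positivity guaranteed by \eqref{P11-3}. This mirrors the paper's proof of the analogous Corollary~\ref{P7.2}, only with slightly more detail spelled out.
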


In the H\"older setting, Theorem~\ref{P9} and Corollary~\ref{P12} yield the following statement.
In view of Remark~\ref{R2}, we assume that $q\le1$.

\begin{corollary}\label{C5.10}
Let $\al>0$ and {$q\in]0,1]$}.
The collection $\{\Omega_1,\ldots,\Omega_n\}$ is $\al-$trans\-ver\-sal of order $q$ at $\bx$ with some $\de_1>0$ and $\de_2>0$ if, for some $\ga>0$ and any ${\omega'_i\in\Omega_i\cap B_{\delta_2}(\bar{x})}$ $(i=1,\ldots,n)$ and $\xi\in]0,(\al\de_1)^\frac{1}{q}[$, there exists a $\la\in]\al\iv\xi^q,\de_1[$ such that inequality \eqref{C3-0} holds true for all $x,x_i\in{X}$ and $\omega_i\in\Omega_i$ $(i=1,\ldots,n)$ satisfying \eqref{P11-1} and \eqref{P11-3}, or all the more, such that inequality \eqref{C3-2} holds true.
\end{corollary}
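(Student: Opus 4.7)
The plan is to derive this statement as a direct specialization of Theorem~\ref{P9} and Corollary~\ref{P12} by plugging in the particular function $\varphi(t):=\al\iv t^q$ for all $t\ge0$. The key preliminary observation is that this $\varphi$ belongs to the subfamily $\mathcal{C}^1$: it is continuous and strictly increasing on $\R_+$, satisfies $\varphi(0)=0$ and $\lim_{t\to+\infty}\varphi(t)=+\infty$ (since $q>0$), and is differentiable on $]0,+\infty[$ with $\varphi'(t)=q\al\iv t^{q-1}>0$. Moreover, its inverse is given explicitly by $\varphi\iv(s)=(\al s)^{1/q}$. The restriction $q\in]0,1]$ is imposed in line with Remark~\ref{R2}, which shows it is necessary for $\varphi-$transversality to be meaningful when $\bx$ lies on the boundary of the intersection.

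First I would translate the quantifier ranges in Theorem~\ref{P9}. The condition $\xi\in]0,\varphi\iv(\de_1)[$ becomes $\xi\in]0,(\al\de_1)^\frac{1}{q}[$, and the interval $]\varphi(\xi),\de_1[$ becomes $]\al\iv\xi^q,\de_1[$. Second, I would rewrite inequality \eqref{P7-1} under this substitution: each occurrence of $\varphi(\cdot)$ becomes $\al\iv(\cdot)^q$, so after multiplying the inequality by $\al>0$ (which does not change the direction of the inequality since $\al>0$), we obtain exactly \eqref{C3-0}. Applying Theorem~\ref{P9} to this data yields the first half of the corollary.

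For the second half, I would invoke Corollary~\ref{P12}(ii), which is available since $\varphi\in\mathcal{C}^1$. Substituting $\varphi'\big(\max_{1\le i\le n}\|\omega_i-x_i-x\|\big)=q\al\iv\big(\max_{1\le i\le n}\|\omega_i-x_i-x\|\big)^{q-1}$ into \eqref{C6-2} and multiplying by $\al$ gives \eqref{C3-2}, so that the latter inequality is sufficient via the $\mathcal{C}^1$ version of the corollary. This shows that \eqref{C3-2} is indeed a stronger (``all the more'') sufficient condition than \eqref{C3-0}.

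There is essentially no obstacle: the proof is a bookkeeping exercise of substitution, the only points requiring care being the verification that the strict monotonicity of $\varphi$ and the positivity of $\al$ allow the factor $\al$ to be moved across the inequalities without altering the ranges of the parameters, and that the intervals for $\xi$ and $\la$ are correctly computed via the explicit inverse $\varphi\iv(s)=(\al s)^{1/q}$. A single sentence of proof citing Theorem~\ref{P9} and Corollary~\ref{P12}(ii) together with the values $\varphi\iv(t)=(\al t)^{\frac{1}{q}}$ and $\varphi'(t)=q\al\iv t^{q-1}$ should suffice.
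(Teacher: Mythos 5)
Your proposal is correct and follows exactly the paper's route: Corollary~\ref{C5.10} is obtained by specializing Theorem~\ref{P9} and Corollary~\ref{P12} to $\varphi(t):=\al\iv t^q$, using $\varphi\iv(t)=(\al t)^{\frac{1}{q}}$ and $\varphi'(t)=q\al\iv t^{q-1}$ to convert the parameter ranges and the inequalities \eqref{P7-1} and \eqref{C6-2} into \eqref{C3-0} and \eqref{C3-2}. Your bookkeeping of the intervals for $\xi$ and $\la$ and of the factor $\al$ is accurate, so nothing further is needed.
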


\begin{remark}\label{R13}
\begin{enumerate}
\item
On top of the explicitly given restriction $\|\omega_i-\omega'_i\|<\la/\ga$ in Theorem~\ref{P9} (and similar conditions in its corollaries), which involves $\ga$, the other conditions implicitly impose another one:
\begin{align*}
\|\omega_i-\omega'_i\|&\le\|x-\bx\|+\|\omega_i-x_i-x\| +\|\omega'_i-x_i-\bx\|\\
&\le\|x-\bx\|+2\xi <\la+2\varphi\iv(\de_1).
\end{align*}
This alternative restriction can be of importance when $\ga$ is small.

\item
It can be observed from the proof of Theorem~\ref{P9} that the sufficient conditions for $\varphi-$transversality can be strengthened by adding another restriction on the choice of $\xi$ and {$x_i$}:
$\varphi(\xi)<d(\bx,\cap_{i=1}^n(\Omega_i-x_i))$.

\item
The sufficient conditions {for} $\varphi-$semitransversality and $\varphi-$subtransversality in Theorems~\ref{P7} and \ref{P8} are particular cases of those in Theorem~\ref{P9},
corresponding to setting {$\omega'_i:=\bx$} and $x_1=\ldots=x_n$, respectively.

\item
The statement of Theorem~\ref{P9} and its corollaries can be simplified by dropping condition \eqref{P11-3}.

\item	
Inequalities \eqref{P7-1}, \eqref{P7.2-1}--\eqref{C3-2}, which are crucial for checking nonlinear transversality, involve a collection of parameters: $x,x_i\in{X}$ and {$\omega_i\in\Omega_i$}, which are related to another collection: a small number $\xi>0$ and points {$\omega'_i\in\Omega_i$} near $\bx$.
The value of $\xi$ and magnitudes of {$\omega'_i-\bx$} are directly controlled by the values of $\de_1$ and $\de_2$ in the definition of $\varphi-$transversality: $\varphi(\xi)<\de_1$ and {$\omega'_i\in B_{\delta_2}(\bar{x})$}.
At the same time, taking into account that $\la$ can be made arbitrarily close to $\varphi(\xi)$, the magnitudes of $x-\bx$, $\omega_i-\omega'_i$ and {$x_i$} are determined by $\de_1$ and $\de_2$ indirectly; they are controlled by $\xi$: cf. conditions \eqref{P11-1} and \eqref{P11-3}.
Thus, the derived parameters $x,x_i\in{X}$ and {$\omega_i\in\Omega_i$} involved in \eqref{P7-1} possess the natural properties: when $\de_1$ and $\de_2$ are small, the points $x$ and {$\omega_i$} are near $\bx$ and the vectors {$x_i$} are small.

\item
In view of the definition of the parametric norm \eqref{pnorm}, if any of the inequalities \eqref{P7-1}, \eqref{P7.2-1}--\eqref{C3-2} holds true for some $\gamma>0$, then it also holds for any $\ga'\in]0,\ga[$.

\item
Even in the linear setting, the characterizations in 	Corollary~\ref{C5.10} are new.
\end{enumerate}
\end{remark}

The next corollary provides a simplified (and weaker!) version of Theorem~\ref{P9}; cf. Remark~\ref{R13}(v).

\begin{corollary}\label{C10}
The collection $\{\Omega_1,\ldots,\Omega_n\}$ is $\varphi-$transversal at $\bx$ with some $\de_1>0$ and $\de_2>0$ if, for some $\ga>0$, inequality \eqref{P7-1} holds for all $x\in{B}_{\de_1}(\bx)$, $x_i\in X$ and $\omega_i\in\Omega_i\cap {B}_{\de_2+\de_1/\ga}(\bx)$ $(i=1,\ldots,n)$ satisfying $\varphi\left({\max}_{1\le i\le n} d(x_i+\bx,\Omega_i)\right)<\de_1$ and ${0<\max_{1\le i\le n}{\|\omega_i-x_i-x\|}<\varphi\iv(\delta_1)}$.
\end{corollary}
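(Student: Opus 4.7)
The plan is to deduce Corollary~\ref{C10} as a direct specialization of Theorem~\ref{P9}. Given the hypothesis of the corollary with some $\ga>0$, I will verify the hypotheses of Theorem~\ref{P9} with the same $\ga$. The key observation is that the auxiliary parameters $\omega'_i$ and $\xi$ in Theorem~\ref{P9} only serve to circumscribe admissible triples $(x, x_i, \omega_i)$ via \eqref{P11-1} and \eqref{P11-3}; it therefore suffices to show that any such triple automatically lies in the (larger) range specified in the hypothesis of Corollary~\ref{C10}.

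First I fix arbitrary $\omega'_i\in\Omega_i\cap B_{\de_2}(\bx)$ $(i=1,\ldots,n)$ and $\xi\in ]0,\varphi\iv(\de_1)[$, and choose any $\la\in ]\varphi(\xi),\de_1[$ (this interval is nonempty because $\varphi(\xi)<\de_1$). For any $x,x_i\in X$ and $\omega_i\in\Omega_i$ satisfying \eqref{P11-1} and \eqref{P11-3}, I will then check four things: (a) $x\in B_{\de_1}(\bx)$, which follows from $\|x-\bx\|<\la<\de_1$; (b) $\omega_i\in B_{\de_2+\de_1/\ga}(\bx)$, via the triangle inequality
\begin{align*}
\|\omega_i-\bx\|\le \|\omega_i-\omega'_i\|+\|\omega'_i-\bx\| < \la/\ga+\de_2<\de_1/\ga+\de_2;
\end{align*}
(c) $0<\max_i\|\omega_i-x_i-x\|<\varphi\iv(\de_1)$, which is immediate from \eqref{P11-3} because $\xi<\varphi\iv(\de_1)$; and (d) $\varphi(\max_i d(x_i+\bx,\Omega_i))<\de_1$, which follows from $d(x_i+\bx,\Omega_i)\le \|\omega'_i-x_i-\bx\|\le \xi$, monotonicity of $\varphi$, and $\varphi(\xi)<\la<\de_1$.

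With (a)--(d) in hand, the hypothesis of Corollary~\ref{C10} guarantees that inequality \eqref{P7-1} holds for the triple $(x,x_i,\omega_i)$. Since this is true for all triples satisfying \eqref{P11-1} and \eqref{P11-3}, Theorem~\ref{P9} applies and yields $\varphi$-transversality of $\{\Omega_1,\ldots,\Omega_n\}$ at $\bx$ with $\de_1$ and $\de_2$. There is no real obstacle: the whole argument is bookkeeping of triangle inequalities. The only conceptual point worth highlighting is that the particular radius $\de_2+\de_1/\ga$ appearing in the statement is precisely what the triangle inequality in (b) produces from the Theorem~\ref{P9} bounds $\|\omega'_i-\bx\|<\de_2$ and $\|\omega_i-\omega'_i\|<\la/\ga<\de_1/\ga$, which explains the otherwise peculiar-looking form of the neighbourhood in Corollary~\ref{C10}.
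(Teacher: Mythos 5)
Your argument is correct and coincides with the paper's own proof: both verify that any admissible triple $(x,x_i,\omega_i)$ from Theorem~\ref{P9} (via \eqref{P11-1}, \eqref{P11-3}) satisfies the four conditions of the corollary, using exactly the same triangle-inequality estimates, and then invoke Theorem~\ref{P9}. No differences worth noting.
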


\begin{proof}
Let $\de_1>0$, $\de_2>0$, $\omega'_i\in\Omega_i\cap B_{\delta_2}(\bar{x})$, $\xi\in]0,\varphi\iv(\de_1)[$, $\la\in]\varphi(\xi), \de_1[$, and points $x,x_i\in{X}$ and $\omega_i\in\Omega_i$ $(i=1,\ldots,n)$ satisfy conditions \eqref{P11-1} and \eqref{P11-3}.
Then
\sloppy
\begin{gather*}
\|x-\bx\|<\la<\de_1,\quad
\|\omega_i-\bx\|\le\|\omega'_i-\bx\|+\|\omega_i-\omega'_i\| <\de_2+\la/\ga<\de_2+\de_1/\ga,\\
d(x_i+\bx,\Omega_i)\le\|x_i+\bx-\omega'_i\|\le\xi <\varphi\iv(\delta_1),\\
0<\max_{1\le i\le n}{\|\omega_i-x_i-x\|}\le\xi<\varphi\iv(\delta_1),
\end{gather*}
i.e. points $x,x_i\in{X}$ and $\omega_i\in\Omega_i$ $(i=1,\ldots,n)$ satisfy all the conditions in the corollary.
Hence, inequality \eqref{P7-1} holds.
It follows from Theorem~\ref{P9} that $\{\Omega_1,\ldots,\Omega_n\}$ is $\varphi-$transversal at $\bx$ with $\de_1$ and $\de_2$.
\qed\end{proof}

Sacrificing the estimates for $\de_1$ and $\de_2$ in Theorem~\ref{P9}, and Corollaries~\ref{P12} and
\ref{C10}, we can formulate the following `$\de$-free' statement.

\begin{corollary}\label{C11.2}
The collection $\{\Omega_1,\ldots,\Omega_n\}$ is $\varphi-$transversal at $\bx$ if,
for some $\ga>0$ and all $x\in{X}$ near $\bx$, $x_i\in X$ $(i=1,\ldots,n)$ near $0$ and $\omega_i\in\Omega_i$ $(i=1,\ldots,n)$ near $\bx$ satisfying $\max_{1\le i\le n}{\|\omega_i-x_i-x\|}>0$, inequality \eqref{P7-1} holds true.
Moreover,
inequality \eqref{P7-1} can be replaced by its localized version \eqref{P7.2-1}, or by \eqref{C6-2} if $\varphi\in\mathcal{C}^1$.
\end{corollary}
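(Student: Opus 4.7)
The plan is to deduce Corollary~\ref{C11.2} directly from Theorem~\ref{P9} (together with Corollary~\ref{P12} for the \emph{moreover} part) by tuning the quantitative parameters $\de_1,\de_2>0$ so that any configuration $(x,x_i,\omega_i)$ arising in the hypothesis of Theorem~\ref{P9} -- namely through conditions \eqref{P11-1} and \eqref{P11-3} -- automatically lies in the neighborhoods supplied by the qualitative hypothesis of the corollary.

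By assumption, there exist $\ga>0$ and radii $r_1,r_2,r_3>0$ such that inequality~\eqref{P7-1} holds for all $x\in B_{r_1}(\bx)$, $x_i\in r_2\B$ and $\omega_i\in\Omega_i\cap B_{r_3}(\bx)$ $(i=1,\ldots,n)$ with $\max_{1\le i\le n}\|\omega_i-x_i-x\|>0$. Fix this $\ga$. Given any $\de_1,\de_2>0$, $\omega'_i\in\Omega_i\cap B_{\de_2}(\bx)$, $\xi\in ]0,\varphi\iv(\de_1)[$, $\la\in ]\varphi(\xi),\de_1[$, and $x,x_i\in X$, $\omega_i\in\Omega_i$ satisfying \eqref{P11-1} and \eqref{P11-3}, one reads off $\|x-\bx\|<\la<\de_1$ and $\|\omega_i-\bx\|\le\|\omega'_i-\bx\|+\|\omega_i-\omega'_i\|<\de_2+\de_1/\ga$ from \eqref{P11-1}, while \eqref{P11-3} combined with $\|\omega'_i-\bx\|<\de_2$ yields $\|x_i\|\le\|\omega'_i-x_i-\bx\|+\|\omega'_i-\bx\|<\xi+\de_2<\varphi\iv(\de_1)+\de_2$. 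Hence, choosing $\de_1,\de_2>0$ so small that $\de_1\le r_1$, $\de_2+\de_1/\ga\le r_3$ and $\varphi\iv(\de_1)+\de_2\le r_2$ -- always possible since $\varphi\iv$ is continuous at $0$ with $\varphi\iv(0)=0$ -- the qualitative hypothesis forces \eqref{P7-1} for every such configuration, and Theorem~\ref{P9} delivers the $\varphi-$transversality of $\{\Omega_1,\ldots,\Omega_n\}$ at $\bx$ with these $\de_1,\de_2$.

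The \emph{moreover} statements require no new idea: one reruns the very same parameter selection, but now invokes Corollary~\ref{P12}(i), respectively Corollary~\ref{P12}(ii) when $\varphi\in\mathcal{C}^1$, in place of Theorem~\ref{P9}, with inequality \eqref{P7.2-1}, respectively \eqref{C6-2}, substituted for \eqref{P7-1}. There is no genuine obstacle in the argument; it is a routine bookkeeping exercise that converts the qualitative ``near $\bx$'' and ``near $0$'' hypotheses into explicit admissible radii $\de_1,\de_2$. The only subtlety worth flagging is the continuity of $\varphi\iv$ at $0$, which lets $\varphi\iv(\de_1)+\de_2$ be made arbitrarily small by shrinking both $\de_1$ and $\de_2$.
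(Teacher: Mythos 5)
Your proposal is correct and is essentially the argument the paper intends: Corollary~\ref{C11.2} is obtained from Theorem~\ref{P9} (and Corollary~\ref{P12} for the localized versions) by shrinking $\de_1$ and $\de_2$ until the configurations produced by \eqref{P11-1} and \eqref{P11-3} fall inside the neighbourhoods supplied by the qualitative hypothesis, exactly as in the paper's proof of the companion Corollary~\ref{C10}. The bounds you derive, $\|x-\bx\|<\de_1$, $\|\omega_i-\bx\|<\de_2+\de_1/\ga$ and $\|x_i\|<\varphi\iv(\de_1)+\de_2$, are the right ones, and the continuity of $\varphi\iv$ at $0$ is indeed the only point that needs flagging.
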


\begin{remark}
The sufficient conditions for $\varphi-$semitransversality and $\varphi-$transversality {in} Theorems~\ref{P7} and \ref{P9} and their corollaries use the same (slope) inequalities \eqref{P7-1}, \eqref{P7.2-1} and \eqref{C6-2}.
Nevertheless, the sufficient conditions in Theorem~\ref{P9} and Corollary~\ref{C11.2} are stronger than the corresponding ones in Theorem~\ref{P7} and Corollary~\ref{C3+}, respectively, as they require the inequalities to be satisfied on a larger set of points.
This is natural as $\varphi-$transversality is a stronger property than $\varphi-$semitransversality.
At the same time, the `$\de$-free' versions in {Corollaries~\ref{C3+} and \ref{C11.2}} are almost identical: the only difference is the additional condition
\sloppy
$$\max_{1\le i\le n}{\|\omega_i-x_i-x\|}\le\max_{1\le i\le n}{\|x_i\|}$$ in {Corollary~\ref{C3+}}.
The sufficient condition in {Corollary~\ref{C11.2}} is still acceptable for characterizing $\varphi-$transversality, but the one in {Corollary~\ref{C3+}} seems a little too strong for $\varphi-$semitransversality.
That is why we prefer not to oversimplify these sufficient conditions.
\end{remark}

\section{Transversality and Regularity}\label{S5}

In this section, we provide quantitative relations between the nonlinear transversality of collections of sets and the corresponding nonlinear regularity properties of set-valued mappings.
Besides, nonlinear extensions of the new \emph{transversality properties of a \SVM\ to a set in the range space} due to Ioffe \cite{Iof17} are {discussed}.

\subsection{Regularity of Set-Valued Mappings}

Our model here is a set-valued mapping $F:X\rightrightarrows Y$ between metric spaces.
We consider its local regularity properties near a given point $(\bx,\by)\in\gph F$.
The nonlinearity in the {definitions} of the properties is determined by a function $\varphi\in\mathcal{C}$.

Regularity of \SVM s have been intensively studied for decades due to their numerous important applications; see monographs \cite{DonRoc14,Iof17,KlaKum02,Mor06.1}.
Nonlinear regularity properties have also been considered by many authors; cf. \cite{FraQui12,GayGeoJea11,Kum09,LiMor12, BorZhu88,Fra87,Iof13,Kru16,Kru16.2,OuyZhaZhu19, ZheZhu16}.
The relations between transversality and regularity properties are well known in the linear case \cite{KruTha15,KruLukTha18,KruLukTha17,Kru06,Kru09,Kru05, Iof00,Iof16,Kru18} as well as in
the Hölder setting \cite{KruTha14}.
Below we briefly discuss more general nonlinear models.

\begin{definition}\label{D3}
The mapping $F$ is
\begin{enumerate}
\item
$\varphi-$semiregular at $(\bar{x},\bar{y})$ if there exists a $\delta>0$ such that
\begin{align*}
d(\bx,F^{-1}(y))\le\varphi(d(y,\by))
\end{align*}
for all $y\in Y$ with $\varphi(d(y,\by))<\delta$;

\item
$\varphi-$subregular at $(\bar{x},\bar{y})$ if there {exist} $\delta_1>0$ and $\delta_2>0$ such that
\begin{align*}
d(x,F^{-1}(\by))\le\varphi(d(\by,F(x)))
\end{align*}
for all $x\in B_{\de_2}(\bx)$ with $\varphi(d(\by,F(x)))<\de_1$;

\item
$\varphi-$regular at $(\bar{x},\bar{y})$ if there {exist} $\delta_1>0$ and $\delta_2>0$ such that
\begin{align}\label{D3-3}
d(x,F^{-1}(y))\le\varphi(d(y,F(x)))
\end{align}
for all $x\in X$ and $y\in Y$ with $d(x,\bx)+d(y,\by)<\delta_2$ and $\varphi(d(y,F(x)))<\de_1$.
\end{enumerate}
\end{definition}

The function $\varphi\in\mathcal{C}$ in the above definition plays the role of a kind of rate or modulus of the respective property.
In the H\"older setting, i.e. when
$\varphi(t):=\alpha^{-1} t^q$ with $\alpha>0$ and $q>0$, we refer to the respective properties in Definition~\ref{D3} as $\al-$semiregularity, $\al-$subregularity and $\al-$regularity of order $q$.
These regularity properties have been studied in \cite{KruTha14,FraQui12,GayGeoJea11,Kum09,Kru16.2,LiMor12}.
It is usually assumed that $q\le1$.
The exact upper bound of all $\al>0$ such that a property holds with some $\delta>0$, or $\delta_1>0$ and $\delta_2>0$, is called the \emph{modulus} of this property.
We use notations s$_{\rm e}$rg$_q[F](\bx,\by)$, srg$_q[F](\bx,\by)$ and rg$_q[F](\bx,\by)$ for the moduli of the respective properties.
If a property does not hold, then by convention the respective modulus equals~0.
With $q=1$ (linear case), the properties are called metric \textit{semiregularity}, \textit{subregularity} and \textit{regularity}, respectively; cf. \cite{Mor06.1,DonRoc14, RocWet98,Iof17, Kru09,CibFabKru19}.

The following assertion is a direct consequence of Definition~\ref{D3}.

\begin{proposition}
If $F$ is $\varphi-$regular at $(\bar{x},\bar{y})$ with some $\delta_1>0$ and $\delta_2>0$, then it is $\varphi-$se\-miregular at $(\bx,\by)$ with $\de:=\min\{\de_1,\varphi(\de_2)\}$ and $\varphi-$subregular at $(\bx,\by)$ with {$\delta_1$ and $\delta_2$}.
\end{proposition}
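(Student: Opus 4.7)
The plan is to derive both conclusions by specializing the definition of $\varphi-$regularity, namely inequality \eqref{D3-3}, to appropriately chosen arguments.

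For the semiregularity part, I would fix $x := \bx$ in Definition~\ref{D3}(iii). Since $(\bx,\by) \in \gph F$, we have $\by \in F(\bx)$, hence $d(y, F(\bx)) \le d(y, \by)$ for every $y \in Y$. I would then take an arbitrary $y \in Y$ with $\varphi(d(y,\by)) < \de := \min\{\de_1, \varphi(\de_2)\}$. The inequality $\varphi(d(y,\by)) < \varphi(\de_2)$, combined with the fact that $\varphi \in \mathcal{C}$ is strictly increasing (so $\varphi\iv$ is as well), yields $d(y,\by) < \de_2$. Together with $d(\bx, \bx) = 0$, this gives $d(\bx,\bx) + d(y,\by) < \de_2$. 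Moreover, by monotonicity of $\varphi$, $\varphi(d(y, F(\bx))) \le \varphi(d(y,\by)) < \de_1$. Both hypotheses of Definition~\ref{D3}(iii) are thus satisfied, so
\begin{align*}
d(\bx, F\iv(y)) \le \varphi(d(y, F(\bx))) \le \varphi(d(y,\by)),
\end{align*}
which is exactly the estimate required for $\varphi-$semiregularity with constant $\de$.

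For the subregularity part, I would fix $y := \by$ in Definition~\ref{D3}(iii). Take any $x \in B_{\de_2}(\bx)$ with $\varphi(d(\by, F(x))) < \de_1$. Since $d(\by, \by) = 0$, the condition $d(x,\bx) + d(\by,\by) < \de_2$ reduces to $d(x,\bx) < \de_2$, which is our assumption. Hence \eqref{D3-3} applies and yields
\begin{align*}
d(x, F\iv(\by)) \le \varphi(d(\by, F(x))),
\end{align*}
which is exactly the estimate required for $\varphi-$subregularity with constants $\de_1$ and $\de_2$.

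There is no real obstacle here; the statement is a direct specialization argument. The only bookkeeping subtlety is the choice of $\de = \min\{\de_1, \varphi(\de_2)\}$ for semiregularity: one needs two smallness conditions on $d(y,\by)$, namely $d(y,\by) < \de_2$ (to control the domain condition on $y$) and $\varphi(d(y,\by)) < \de_1$ (to control the residual bound). Writing the threshold for $\varphi(d(y,\by))$ as the minimum of $\de_1$ and $\varphi(\de_2)$ compactly encodes both, using strict monotonicity of $\varphi$ to pull the second condition back to one on $d(y,\by)$ itself.
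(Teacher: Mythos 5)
Your proposal is correct and is precisely the specialization argument the paper has in mind: the authors state the proposition as ``a direct consequence of Definition~\ref{D3}'' without writing out a proof, and your two specializations ($x:=\bx$ using $\by\in F(\bx)$ to get $d(y,F(\bx))\le d(y,\by)$, and $y:=\by$) together with the bookkeeping for $\de=\min\{\de_1,\varphi(\de_2)\}$ supply exactly the missing details.
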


Note the combined inequality $d(x,\bx)+d(y,\by)<\delta_2$ employed in part (iii) of Definition~\ref{D3} instead of the more traditional separate conditions $x\in B_{\delta_2}(\bx)$ and $y\in B_{\delta_2}(\by)$.
This replacement does not affect the property of $\varphi-$regularity itself, but can have an effect on the value of $\de_2$.
Employing this inequality makes the property a direct analogue of the metric characterization of $\varphi-$trans\-versality in {Theorem~\ref{P3.5}} and is convenient for establishing relations between the regularity and transversality properties.
The next proposition provides also an important special case when the point $x$ in \eqref{D3-3} can be fixed: $x=\bx$.

\begin{proposition}\label{P10}
Let $\delta_1>0$ and $\delta_2>0$.
Consider the following conditions:
\begin{enumerate}
\item [\rm(a)]
{inequality \eqref{D3-3} holds}
for all $x\in B_{\delta_2}(\bx)$ and $y\in B_{\delta_2}(\by)$ with $\varphi(d(y,F(x)))<\de_1$;

\item [\rm(b)]
{inequality \eqref{D3-3} holds}
for all $x\in X$ and $y\in Y$ with
$d(x,\bx)+d(y,\by)<\delta_2$ and $\varphi(d(y,F(x)))<\de_1$;
\item [\rm(c)]
$d(\bx,F^{-1}(y))\le\varphi(d(y,F(\bx)))$
for all $y\in B_{\delta_2}(\by)$ with $\varphi(d(y,F(\bx)))<\de_1$.
\end{enumerate}
Then
\begin{enumerate}
\item
{\rm (a) \folgt (b) \folgt (c)}.
Moreover, condition {\rm (b)} implies {\rm (a)} with $\de'_2:=\de_2/2$ in place of~$\de_2$.
\item
If $X$ is a normed space, $Y=X^n$ for some $n\in\N$, $\by=(\bx_1,\ldots,\bx_n)$ and $F:X\rightrightarrows X^n$ is given by
\begin{align}\label{P10-1}
F(x):=(\Omega_1-x)\times\ldots\times (\Omega_n-x),\quad x\in X,
\end{align}
where $\Omega_1,\ldots,\Omega_n\subset X$,
then {\rm (b) \iff (c)}.
\end{enumerate}
\end{proposition}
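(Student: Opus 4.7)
\smallskip\noindent\textbf{Proof plan.}
My plan is to handle part (i) with three short implications and then tackle part (ii) by a translation trick that reduces~(b) to its value at $\bx$.

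For part (i), the equivalence of~(a) and~(b) up to a factor of two in the radius is essentially a triangle--inequality bookkeeping exercise. For (a)\folgt(b), any pair $(x,y)$ satisfying the hypothesis of~(b), namely $d(x,\bx)+d(y,\by)<\de_2$, automatically satisfies $d(x,\bx)<\de_2$ and $d(y,\by)<\de_2$, so the conclusion of~(a) applies verbatim. For the converse with $\de'_2:=\de_2/2$, if $x\in B_{\de_2/2}(\bx)$ and $y\in B_{\de_2/2}(\by)$ then $d(x,\bx)+d(y,\by)<\de_2$, and~(b) yields~\eqref{D3-3}. For (b)\folgt(c), specialize~(b) to $x=\bx$: since $\bx\in F^{-1}(\by)$ by assumption, $d(\bx,\bx)+d(y,\by)=d(y,\by)<\de_2$ whenever $y\in B_{\de_2}(\by)$, and $\varphi(d(y,F(\bx)))<\de_1$ delivers condition~(c).

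For part (ii), only (c)\folgt(b) is new, since (b)\folgt(c) was established above. The key idea is to use the translation invariance built into the product structure of $F$. Given $(x,y)$ with $d(x,\bx)+d(y,\by)<\de_2$ and $\varphi(d(y,F(x)))<\de_1$, I set $y':=(y_1+x-\bx,\ldots,y_n+x-\bx)\in X^n$. Using the maximum norm on $X^n$ and the formulas
\begin{align*}
d(y,F(x))&=\max_{1\le i\le n}d(x+y_i,\Omega_i), \\
F^{-1}(y)&=\bigcap_{i=1}^n(\Omega_i-y_i),
\end{align*}
I would verify the three identities
\begin{align*}
d(y',F(\bx))&=\max_{1\le i\le n}d(\bx+y_i+x-\bx,\Omega_i)=d(y,F(x)), \\
F^{-1}(y')&=\bigcap_{i=1}^n(\Omega_i-y_i-(x-\bx))=F^{-1}(y)+(\bx-x), \\
\text{so }d(\bx,F^{-1}(y'))&=d(x,F^{-1}(y)).
\end{align*}
The triangle inequality then gives $d(y',\by)\le d(y,\by)+d(x,\bx)<\de_2$, and $\varphi(d(y',F(\bx)))=\varphi(d(y,F(x)))<\de_1$, so~(c) applied to $y'$ yields $d(\bx,F^{-1}(y'))\le\varphi(d(y',F(\bx)))$, which, by the identities above, is exactly~\eqref{D3-3} at $(x,y)$.

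The only subtle point is getting the translation to line up in both $F$ and $F^{-1}$ simultaneously; this is where the specific product form~\eqref{P10-1} (with a \emph{common} shift $x$ across all coordinates) is essential. Once that translation is identified, the three identities are mechanical and the conclusion is immediate.
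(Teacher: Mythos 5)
Your proposal is correct and follows essentially the same route as the paper: part (i) by direct specialization and the triangle inequality, and part (ii) by the translation $y':=(y_1+x-\bx,\ldots,y_n+x-\bx)$, which is exactly the substitution $x_i':=x_i+x-\bx$ used in the paper, together with the same three identities for $d(y',F(\bx))$, $F^{-1}(y')$ and $d(\bx,F^{-1}(y'))$. No gaps; the remark that $\bx\in F^{-1}(\by)$ in the step (b)\folgt(c) is superfluous but harmless.
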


\begin{proof}
\begin{enumerate}
\item
All the implications are straightforward.
\item
In view of (i), we only need to prove (c) \folgt (b).
Suppose condition (c) is satisfied.
Let $x\in X$, $y=(x_1,\ldots,x_n)\in X^n$, $\|x-\bx\|+\|y-\by\|<\delta_2$ and $\varphi(d(y,F(x)))<\de_1$.
Set $x_i':=x_i+x-\bx$ $(i=1,\ldots,n)$ and $y':=(x_1',\ldots,x_n')$.
Then
\begin{gather*}
\|y'-\by\|\le\|y'-y\|+\|y-\by\|=\|x-\bx\|+\|y-\by\| <\de_2,\\
d(x,F^{-1}(y))=d(x,\cap_{i=1}^n (\Omega_i-x_i))=d(\bx,\cap_{i=1}^n (\Omega_i-x_i'))=d(\bx,F^{-1}(y')),\\
d(y,F(x))=\max_{1\le i\le n}d(x_i,\Omega_i-x)=\max_{1\le i\le n}d(x_i',\Omega_i-\bx)=d(y',F(\bx)).
\end{gather*}
and, thanks to (c), $d(x,F^{-1}(y))\le\varphi(d(y,F(x)))$.
\qed\end{enumerate}
\end{proof}

The set-valued mapping \eqref{P10-1} plays the key role in establishing relations between the regularity and transversality properties.
It was most likely first used by Ioffe in \cite{Iof00}.
Observe that
$F\iv(x_1,\ldots,x_n)=(\Omega_1-x_1)\cap\ldots\cap(\Omega_n-x_n)$ for all $x_1,\ldots,x_n\in X$ and,
if $\bx\in\cap_{i=1}^n\Omega_i$, then $(0,\ldots,0)\in F(\bx)$.

\begin{theorem}\label{theorem13}
Let $\Omega_1,\ldots,\Omega_n$ be subsets of a normed space $X$, $\bx\in\cap_{i=1}^n\Omega_i$, $\varphi\in\mathcal{C}$, and $F$ be defined by \eqref{P10-1}.
\begin{enumerate}
\item
The collection $\{\Omega_1,\ldots,\Omega_n\}$ is $\varphi-$semitransversal at $\bx$ with some $\de>0$ if and only if $F$ is $\varphi-$semi\-regular at $(\bx,(0,\ldots,0))$ with {$\de$}.
\item
The collection $\{\Omega_1,\ldots,\Omega_n\}$ is $\varphi-$subtransversal at $\bx$ with some $\de_1>0$ and $\de_2>0$ if and only if $F$ is $\varphi-$sub\-regular at $(\bx,(0,\ldots,0))$ with {$\de_1$ and $\de_2$}.
\item
If $\{\Omega_1,\ldots,\Omega_n\}$ is $\varphi-$transversal at $\bx$ with some $\de_1>0$ and $\de_2>0$,
then $F$ is $\varphi-$regular at $(\bx,(0,\ldots,0))$ with
any $\de_1'\in ]0,\de_1]$ and $\de_2'>0$ satisfying ${\varphi\iv(\de_1')+\de_2'}\le\de_2$.
\sloppy

Conversely, if $F$ is $\varphi-$regular at $(\bx,(0,\ldots,0))$ with some $\de_1>0$ and $\de_2>0$, then $\{\Omega_1,\ldots,\Omega_n\}$ is $\varphi-$transversal at $\bx$ with any $\de_1'\in]0,\de_1]$ and $\de_2'>0$ satisfying ${\varphi\iv(\de_1')+\de_2'}\le\de_2$.
\end{enumerate}
\end{theorem}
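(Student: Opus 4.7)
The plan is to reduce each of the three equivalences to the metric characterizations already established in Theorems~\ref{T4.1} and~\ref{P3.5}, by exploiting the explicit formulas for $F$ and $F^{-1}$ induced by~\eqref{P10-1}. The key structural identities I would record first are $F^{-1}(x_1,\ldots,x_n)=\bigcap_{i=1}^{n}(\Omega_i-x_i)$ and, with $X^n$ equipped with the maximum norm and $\by:=(0,\ldots,0)$, the distance formulas $d(y,\by)=\max_{1\le i\le n}\|x_i\|$, $d(y,F(x))=\max_{1\le i\le n}d(x+x_i,\Omega_i)$ for $y=(x_1,\ldots,x_n)$, and in particular $d(\by,F(x))=\max_{1\le i\le n}d(x,\Omega_i)$. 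Note also that $\by\in F(\bx)$ since $\bx\in\bigcap_{i=1}^{n}\Omega_i$.

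For (i), Definition~\ref{D3}(i) applied to $F$ at $(\bx,\by)$ reads as $d(\bx,\bigcap_{i=1}^{n}(\Omega_i-x_i))\le\varphi(\max_{1\le i\le n}\|x_i\|)$ on the set where $\varphi(\max_{1\le i\le n}\|x_i\|)<\delta$, which is verbatim the inequality~\eqref{T1-1}; Theorem~\ref{T4.1}(i) then identifies this with $\varphi-$semitransversality of $\{\Omega_1,\ldots,\Omega_n\}$ with the same $\delta$. Part (ii) proceeds identically: Definition~\ref{D3}(ii) becomes $d(x,\bigcap_{i=1}^{n}\Omega_i)\le\varphi(\max_{1\le i\le n}d(x,\Omega_i))$ on $\{x\in B_{\delta_2}(\bx):\varphi(\max_{1\le i\le n}d(x,\Omega_i))<\delta_1\}$, i.e., condition~\eqref{T1-2} of Theorem~\ref{T4.1}(ii)(a), and the parameters $\delta_1,\delta_2$ transfer unchanged.

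Part (iii) is where the conversion $\varphi^{-1}(\delta_1')+\delta_2'\le\delta_2$ enters, because Definition~\ref{D3}(iii) uses the combined neighborhood condition $\|x-\bx\|+\|y-\by\|<\delta_2$ while the natural metric form of $\varphi-$transversality in Theorem~\ref{P3.5} uses separate bounds. My strategy is first to invoke Proposition~\ref{P10}(ii), which for $F$ of the form~\eqref{P10-1} asserts the equivalence between the regularity condition (b) and its $x=\bx$ version (c) \emph{with identical} $\delta_1,\delta_2$; this is precisely the shift-invariance of $F$ and it freezes the $x$-slot at $\bx$. After this reduction, using $d(\bx,\Omega_i-x_i)=d(\bx+x_i,\Omega_i)$, condition (c) rewrites verbatim as condition~(ii) of Theorem~\ref{P3.5} with the same $\delta_1,\delta_2$. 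The two implications of~(iii) then follow directly from the two moreover clauses of Theorem~\ref{P3.5}, which are exactly the source of the adjustment $\varphi^{-1}(\delta_1')+\delta_2'\le\delta_2$.

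The main obstacle will be the parameter bookkeeping in (iii): I must check that Proposition~\ref{P10}(ii) genuinely preserves the constants when identifying regularity with condition~(ii) of Theorem~\ref{P3.5}, and then thread the conversion through the two moreover clauses in both directions without any loss. Parts (i) and (ii) reduce to direct definition-chasing once the distance formulas for $F$ and $F^{-1}$ are written down.
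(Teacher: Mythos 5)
Your proposal is correct and follows essentially the same route as the paper, which simply derives (i) and (ii) from Theorem~\ref{T4.1}(i) and (ii) and (iii) from Theorem~\ref{P3.5} after observing that, for $F$ given by \eqref{P10-1}, one has $F^{-1}(x_1,\ldots,x_n)=\bigcap_{i=1}^n(\Omega_i-x_i)$, $d(y,\by)=\max_i\|x_i\|$ and $d(\by,F(x))=\max_i d(x,\Omega_i)$. Your explicit use of Proposition~\ref{P10}(ii) to pass between the combined-neighbourhood form of Definition~\ref{D3}(iii) and condition (ii) of Theorem~\ref{P3.5}, with the parameter adjustment then supplied by the two ``moreover'' clauses of that theorem, is exactly the intended bookkeeping and is carried out correctly.
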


\begin{proof}
(i) and (ii) follow from {Theorem~\ref{T4.1}(i) and (ii)}, respectively, while (iii) is a consequence of Theorem~\ref{P3.5}.
\qed\end{proof}

The next corollary provides $\de-$free versions of the assertions in Theorem \ref{theorem13}.

\begin{corollary}\label{C11}
Let $\Omega_1,\ldots,\Omega_n$ be subsets of a normed space $X$, $\bx\in \cap_{i=1}^n\Omega_i$, $\varphi \in \mathcal{C}$, and $F$ be defined by \eqref{P10-1}.
The collection $\{\Omega_1,\ldots,\Omega_n\}$ is \begin{enumerate}
\item
$\varphi-$semitransversal at $\bx$ if and only if $F$ is $\varphi-$semiregular at $(\bx,(0,\ldots,0))$;
\item
$\varphi-$subtransversal at $\bx$ if and only if $F$ is $\varphi-$subregular at $(\bx,(0,\ldots,0))$;
\item
$\varphi-$transversal at $\bx$ if and only if $F$ is $\varphi-$regular at $(\bx,(0,\ldots,0))$.
\sloppy
\end{enumerate}
\end{corollary}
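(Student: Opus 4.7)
The plan is to derive each of the three equivalences directly from the corresponding item of Theorem~\ref{theorem13} by existentially quantifying over the parameters $\de$ (or $\de_1,\de_2$). Recall that the phrase ``$\{\Omega_1,\ldots,\Omega_n\}$ is $\varphi-$semi\-trans\-versal at $\bx$'' in Definition~\ref{D1}(i) is just shorthand for ``there exists $\de>0$ such that $\ldots$''; similarly for the other five properties involved.

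For parts (i) and (ii), the argument is immediate: Theorem~\ref{theorem13}(i) and (ii) each express a biconditional in which the same $\de$ (respectively the same pair $\de_1,\de_2$) appears on both sides. Existentially quantifying over these parameters yields exactly the $\de-$free equivalences stated in (i) and (ii).

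The only step that requires a small additional argument is part (iii). Here Theorem~\ref{theorem13}(iii) does not identify the parameters on the two sides directly; rather, it asserts that $\varphi-$regularity with $(\de_1,\de_2)$ implies $\varphi-$transversality with any $(\de_1',\de_2')$ subject to $\varphi\iv(\de_1')+\de_2'\le\de_2$ (and conversely). Thus I must verify that, given any $\de_1,\de_2>0$, one can always find admissible $\de_1'\in]0,\de_1]$ and $\de_2'>0$ satisfying this constraint. Since $\varphi\in\mathcal{C}$ is continuous and strictly increasing with $\varphi(0)=0$, the choice $\de_2':=\de_2/2$ and $\de_1':=\min\{\de_1,\varphi(\de_2/2)\}$ works, giving $\varphi\iv(\de_1')\le\de_2/2$ and hence $\varphi\iv(\de_1')+\de_2'\le\de_2$. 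Applying Theorem~\ref{theorem13}(iii) in both directions with such a choice then delivers (iii).

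I do not expect any serious obstacle: the corollary is essentially a qualitative restatement of Theorem~\ref{theorem13}, and the only nontrivial move is the parameter adjustment in (iii), which is handled by the explicit choice above.
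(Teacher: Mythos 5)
Your proposal is correct and follows the same route as the paper, which simply presents Corollary~\ref{C11} as the $\de$-free version of Theorem~\ref{theorem13}; your explicit parameter adjustment $\de_2':=\de_2/2$, $\de_1':=\min\{\de_1,\varphi(\de_2/2)\}$ in part (iii) is exactly the kind of routine verification the paper leaves implicit, and it is valid since $\varphi\iv(\de_1')\le\de_2/2$.
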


\begin{remark}
\begin{enumerate}
\item
In the H\"older setting Corollary~\ref{C11} reduces to \cite[Proposition~9]{KruTha14}.
\item
Apart from the mapping $F$ defined by \eqref{P10-1},	
in the case of two sets other set-valued mappings can be used to ensure similar equivalences between the transversality and regularity properties; see \cite{Iof17}.
\end{enumerate}
\end{remark}

In view of Theorem~\ref{theorem13}, the nonlinear transversality properties of collections of sets can be viewed as particular cases of the corresponding nonlinear regularity properties of \SVM s.
We are going to show that the two popular models are in a sense equivalent.

Given an arbitrary set-valued mapping $F:X\rightrightarrows Y$ between metric spaces and a point $(\bx,\by)\in\gph F$, we can consider the two sets:
\begin{align}\label{15*}
\Omega_1:=\gph F,\quad\Omega_2:=X\times\{\by\}
\end{align}
in the product space $X\times Y$.
Note that $(\bx,\by)\in\Omega_1\cap\Omega_2=F^{-1}(\by) \times \{\by\}$.
To establish the relationship between the two sets of properties, we have to assume in the next two theorems that $X$ and $Y$ are normed vector spaces.

\begin{theorem}\label{theo4}
Let $X$ and $Y$ be normed spaces, $F:X\rightrightarrows Y$, $(\bx,\by)\in\gph F$, and $\varphi\in\mathcal{C}$.
Let $\Omega_1$ and $\Omega_2$ be defined by \eqref{15*}, and {$\psi(t):=\varphi(2t)+t$ for all $t\ge 0$}.
\begin{enumerate}
\item
If $F$ is $\varphi-$semiregular at $(\bx,\by)$ with some $\delta>0$, then $\{\Omega_1,\Omega_2\}$ is $\psi-$semi\-transversal at $(\bx,\by)$ with $\delta':=\de+\varphi^{-1}(\de)/2$.
\item
If $F$ is $\varphi-$subregular at $(\bx,\by)$ with some $\delta_1>0$ and $\delta_2>0$, then $\{\Omega_1,\Omega_2\}$ is $\psi-$sub\-transversal at $(\bx,\by)$ with any $\delta'_1>0$ and $\delta'_2>0$ such that ${\varphi(2\psi\iv(\de'_1))\le\de_1}$ and
${\psi\iv(\de_1')+\de_2'}\le\delta_2$.

\item
If $F$ is $\varphi-$regular at $(\bx,\by)$ with some $\delta_1>0$ and $\delta_2>0$, then $\{\Omega_1,\Omega_2\}$ is $\psi-$trans\-versal at $(\bx,\by)$ with any $\delta'_1>0$ and $\delta'_2>0$ such that
$\varphi(2\psi\iv(\de'_1))\le\de_1$ and
${\psi\iv(\de_1')+\de_2'}\le\delta_2/2$.
\end{enumerate}
\end{theorem}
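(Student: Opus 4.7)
The plan is to exploit the structure of $\{\Omega_1,\Omega_2\}$: a point $(u,v)\in\Omega_1-(x_1',y_1')$ satisfies $v+y_1'\in F(u+x_1')$, while $(u,v)\in\Omega_2-(x_2',y_2')$ forces $v=\bar y-y_2'$. Intersecting the two shifted sets therefore amounts to solving an inclusion of the form $u+x_1'\in F^{-1}(y)$ for $y:=\bar y-y_2'+y_1'$, which brings regularity of $F$ directly into play. In each of the three parts I will pick the natural target $y$ (or $y'$), use the appropriate regularity property of $F$ to produce a preimage near $\bar x$, and verify that the resulting point lies in the required ball. The modulus $\psi(t)=\varphi(2t)+t$ arises from two unavoidable contributions: the factor $2$ compensates the combination $y_1'-y_2'$ of two vectors of norm at most $r:=\max\{\|(x_1',y_1')\|,\|(x_2',y_2')\|\}$, and the additive ``$+\,t$'' accounts for the $X$-component shift $\|x_1'\|\le r$ which does not interact with $F$ at all.

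For part (i), with $r$ as above, $\psi(r)<\rho<\delta'=\delta+\varphi^{-1}(\delta)/2$. A short case analysis rules out $r\ge\varphi^{-1}(\delta)/2$ (otherwise $\varphi(2r)+r\ge\delta+\varphi^{-1}(\delta)/2$, a contradiction), so $\varphi(\|y-\bar y\|)\le\varphi(2r)<\delta$. Semiregularity then furnishes $x\in F^{-1}(y)$ with $\|x-\bar x\|$ arbitrarily close to $\varphi(2r)$; choosing $u:=x-x_1'$ and $v:=\bar y-y_2'$ gives a point of the intersection whose distance to $(\bar x,\bar y)$ is at most $\max\{\varphi(2r)+r,\,r\}=\psi(r)<\rho$, as required.

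Part (ii) needs an extra preliminary step because $\varphi$-subregularity is available only at points close to $\bar x$ in $X$, whereas the subtransversality test point $(x,y)$ need not lie on $\gph F$. Given $(x,y)\in B_{\delta'_2}(\bar x,\bar y)$ with $s:=\max\{d((x,y),\Omega_1),\|y-\bar y\|\}$ satisfying $\psi(s)<\rho<\delta'_1$, I will first choose an approximate projection $(x',y')\in\gph F$ with $\max\{\|x-x'\|,\|y-y'\|\}<s+\epsilon$. Then $\|x'-\bar x\|<s+\delta'_2+\epsilon<\psi^{-1}(\delta'_1)+\delta'_2+\epsilon\le\delta_2$ for small $\epsilon$, and $d(\bar y,F(x'))\le\|\bar y-y'\|\le 2s+\epsilon$ yields $\varphi(d(\bar y,F(x')))\le\varphi(2\psi^{-1}(\delta'_1))\le\delta_1$ after shrinking $\epsilon$. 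Subregularity then delivers $x_0\in F^{-1}(\bar y)$ at distance from $x'$ at most $\varphi(2s+\epsilon)$, and the triangle inequality gives $\|x-x_0\|<\psi(s)<\rho$ in the limit $\epsilon\downarrow 0$, producing $(x_0,\bar y)\in\Omega_1\cap\Omega_2\cap B_\rho(x,y)$.

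Part (iii) blends the two techniques. Writing $\omega_1=(a_1,b_1)\in\gph F$, $\omega_2=(a_2,\bar y)$, $z_i=(x_i,y_i)$ and $r:=\max_i\|z_i\|$, membership in $\Omega_2-\omega_2-z_2$ forces $v=-y_2$, so the inclusion becomes $u+a_1+x_1\in F^{-1}(y')$ with $y':=-y_2+b_1+y_1$. Since $b_1\in F(a_1)$, one has $d(y',F(a_1))\le\|y_1-y_2\|\le 2r$, and the $\varphi$-regularity of $F$ applies to the pair $(a_1,y')$ once the two domain conditions hold: $\|a_1-\bar x\|+\|y'-\bar y\|\le 2\delta'_2+2r\le 2(\delta'_2+\psi^{-1}(\delta'_1))\le\delta_2$ --- this is the origin of the factor $2$ on the right-hand side of $\psi^{-1}(\delta'_1)+\delta'_2\le\delta_2/2$, in contrast to part (ii) --- and $\varphi(d(y',F(a_1)))\le\varphi(2\psi^{-1}(\delta'_1))\le\delta_1$. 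Regularity then supplies $x'\in F^{-1}(y')$ with $\|x'-a_1\|$ arbitrarily close to $\varphi(2r)$, so $u:=x'-a_1-x_1$ has norm at most $\psi(r)<\rho$. The main obstacle throughout is simply this parameter bookkeeping: each threshold on $\delta_1',\delta_2'$ in the statement corresponds to one of the two bounds ensuring that $y'$ (or $y$) simultaneously fits into $F$'s domain of regularity and yields a preimage inside the ball of radius $\rho$.
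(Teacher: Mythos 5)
Your proposal is correct and follows essentially the same route as the paper's proof: in each part you identify the intersection of the shifted sets with a fiber of $F^{-1}$ (forcing the $Y$-component via $\Omega_2$), apply the corresponding regularity property of $F$ to the target $y'$ built from the shift vectors, and track the parameters so that $\varphi(2\cdot)$ absorbs the two $Y$-shifts and the additive $t$ absorbs the $X$-shift, exactly as in the paper. The only cosmetic differences are that you parametrize by $r=\max_i\|z_i\|$ with $\psi(r)<\rho$ instead of by $\psi^{-1}(\rho)$, and in part (i) you argue $\varphi(2r)<\delta$ by contradiction where the paper uses the identity $\varphi(2\psi^{-1}(t))+\psi^{-1}(t)=t$.
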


\begin{proof}
{Observe} that {$\psi\in\mathcal{C}$,} $\varphi(2\psi\iv(t))+\psi\iv(t)=t$ and $\psi(\varphi^{-1}(t)/2)=t+\varphi^{-1}(t)/2$ for all $t\ge0$.
\begin{enumerate}
\item
Let $F$ be $\varphi-$semiregular at $(\bx,\by)$ with some $\delta>0$.
Set $\delta':=\de+\varphi^{-1}(\de)/2=\psi(\varphi^{-1}(\de)/2)$.
Let $\rho\in]0,\delta'[$ and $(u_1,v_1),(u_2,v_2)\in \psi^{-1}(\rho)\B$.
Set $y':=\by+v_1-v_2$.
Observe that
\begin{align*}
(\Omega_1-(u_1,v_1))\cap(\Omega_2-(u_2,v_2))
&=(\gph F-(u_1,v_1))\cap(X\times\{\by-v_2\})\\
&=\left(F^{-1}(y')-u_1\right)\times\{\by-v_2\}.
\end{align*}
We have
$\|y'-\by\|=\|v_1-v_2\|\le\|v_1\|+\|v_2\| <2\psi^{-1}(\rho)$,
and consequently, ${\varphi(\|y'-\by\|)<\varphi(2\psi^{-1}(\rho)) <\varphi(2\psi^{-1}(\de'))=\de}$.
By Definition~\ref{D3}(i),
\sloppy
\begin{align*}
d(\bx,F^{-1}(y')-u_1)
&\le d(\bx,F^{-1}(y'))+\|u_1\|\\
&\le\varphi(\|y'-\by\|)+\|u_1\| <\varphi(2\psi^{-1}(\rho))+\psi^{-1}(\rho)=\rho,
\end{align*}
and consequently,
\begin{align*}
d\left((\bx,\by),(\Omega_1-(u_1,v_1))\cap (\Omega_2-(u_2,v_2))\right) &\le\max\{d(\bx,F^{-1}(y')-u_1),\|v_2\|\}\\
&<\max\{\rho,\psi^{-1}(\rho)\}=\rho;
\end{align*}
hence,
\begin{align} \label{61}
\left(\Omega_1-(u_1,v_1)\right)\cap \left(\Omega_2-(u_2,v_2)\right)\cap B_{\rho}(\bx,\by)\neq \emptyset.
\end{align}
{By Definition~\ref{D1}(i),} $\{\Omega_1,\Omega_2\}$ is $\varphi-$semitransversal at $(\bx,\by)$ with $\delta'$.

\item
Let $F$ be $\varphi-$subregular at $(\bx,\by)$ with some $\delta_1>0$ and $\delta_2>0$.
Choose numbers $\delta'_1>0$ and $\delta'_2>0$ such that
$\varphi(2\psi\iv(\de'_1))\le\de_1$ and
${\psi\iv(\de_1')+\de_2'}\le\delta_2$.
Let $\rho\in]0,\delta'_1[$ and $(x,y)\in B_{\de'_2}(\bx,\by)$ with ${\psi(\max\{d((x,y),\Omega_1),d((x,y),\Omega_2)\})<\rho}$, i.e. $\|y-\by\|<\psi\iv(\rho)$ and there exists a point $(x_1,y_1)\in\gph F$ such that ${\|(x,y)-(x_1,y_1)\|<\psi\iv(\rho)}$.
Then
\begin{gather*}
\|x_1-\bx\|\le{\|x_1-x\|+\|x-\bx\|}<{\psi^{-1}(\de'_1)+\delta'_2}\le\delta_2,\\
d(\by,F(x_1))\le\|y_1-\by\|\le\|y-\by\|+\|y_1-y\|<2\psi\iv(\rho),
\end{gather*}
and consequently,
$\varphi(d(\by,F(x_1)))<\varphi(2\psi\iv(\rho)) <\varphi(2\psi\iv(\de'_1))\le\de_1$.
Choose a positive $\eps<2\psi^{-1}(\rho)-d(\by,F(x_1))$.
By Definition~\ref{D3}(ii), there exists an ${x'\in F^{-1}(\by)}$ such that $\|x'-x_1\|<\varphi(d(\by,F(x_1))+\eps) <\varphi(2\psi^{-1}(\rho))$.
{Hence}, $(x',\by)\in\Omega_1\cap\Omega_2$ and
\begin{gather*}
\|x-x'\|\le\|x_1-x'\|+\|x-x_1\| <\varphi(2\psi^{-1}(\rho))+\psi^{-1}(\rho)=\rho,\\
\|y-\by\|<\psi\iv(\rho)<\rho.
\end{gather*}
Thus, $\Omega_1\cap\Omega_2\cap B_\rho(x,y)\ne\es$.
By Definition~\ref{D1}(ii), $\{\Omega_1,\Omega_2\}$ is $\psi-$subtransversal at $(\bx,\by)$ with $\delta'_1$ and $\delta'_2$.
\item
Let $F$ be $\varphi-$regular at $(\bx,\by)$ with some $\delta_1>0$ and $\delta_2>0$.
Choose numbers $\delta'_1>0$ and $\delta'_2>0$ such that
$\varphi(2\psi\iv(\de'_1))\le\de_1$ and
${\psi^{-1}(\de'_1)+\delta'_2}\le\delta_2/2$.
Let ${\rho\in]0,\delta'_1[}$, $(x_1,y_1)\in\gph F\cap B_{\delta'_2}(\bx,\by)$, $x_2\in B_{\delta'_2}(\bx)$ and $(u_1,v_1),(u_2,v_2)\in\psi^{-1}(\rho)\B$.
Set ${y':=y_1+v_1-v_2}$.
Then
\begin{gather*}
\|x_1-\bx\|+\|y'-\by\| \le{\|v_1\|+\|v_2\|+\|x_1-\bx\|+\|y_1-\by\|}< {2\psi^{-1}(\de'_1)+2\delta'_2}\le\delta_2,\\
\varphi(d(y',F(x_1)))\le\varphi(\|y'-y_1\|) \le\varphi(\|v_1\|+\|v_2\|) <\varphi(2\psi^{-1}(\de'_1))\le\de_1.
\end{gather*}
Choose a positive $\eps<2\left(\psi^{-1}(\rho)-\max\{\|v_1\|,\|v_2\|\}\right)$.
By Definition~\ref{D3}(iii), there exists an $x'\in F^{-1}(y')$ such that
$$\|x_1-x'\|<\varphi(\|y'-y_1\|+\eps)\le \varphi(2\max\{\|v_1\|,\|v_2\|\}+\eps) <\varphi(2\psi^{-1}(\rho)).$$
Denote $\hat x:=x'-x_1-u_1$ and $\hat y:=y'-y_1-v_1$.
Thus, $(x',y')\in\Omega_1$ and $(\hat x,\hat y)\in\Omega_1-(x_1,y_1)-(u_1,v_1)$.
At the same time, $\hat y=-v_2$ and $(\hat x,\hat y)\in\Omega_2-(x_2,\by)-(u_2,v_2)$.
Moreover,
\begin{gather*}
\|\hat x\|\le\|x'-x_1\|+\|u_1\| <\varphi(2\psi^{-1}(\rho))+\psi^{-1}(\rho)=\rho,\\
\|\hat y\|=\|v_2\|<\psi^{-1}(\rho)<\rho;
\end{gather*}
hence $(x',y')\in\rho\B$.
By Definition~\ref{D1}(iii), $\{\Omega_1,\Omega_2\}$ is $\psi-$transversal at $(\bx,\by)$ with $\delta'_1$ and $\delta'_2$.
\qed\end{enumerate}
\end{proof}

\begin{theorem}\label{T7.3}
Let $X$ and $Y$ be normed spaces, $F:X\rightrightarrows Y$, $(\bx,\by)\in\gph F$, and $\varphi\in\mathcal{C}$.
Let $\Omega_1$ and $\Omega_2$ be defined by \eqref{15*}, and {$\psi(t):=\varphi(t/2)$ for all $t\ge 0$}.
\begin{enumerate}
\item
If $\{\Omega_1,\Omega_2\}$ is $\varphi-$semitransversal at $(\bx,\by)$ with some $\delta>0$, then $F$ is ${\psi}-$semi\-regular at $(\bx,\by)$ with {$\delta$}.	

\item
If $\{\Omega_1,\Omega_2\}$ is $\varphi-$subtransversal at $(\bx,\by)$ with some $\delta_1>0$ and $\delta_2>0$, then $F$ is ${\psi}-$subregular at $(\bx,\by)$ with $\delta'_1:=\min\{\de_1,\psi(2\delta_2)\}$ and $\de_2$.
\item
If $\{\Omega_1,\Omega_2\}$ is $\varphi-$transversal at $(\bx,\by)$ with some $\delta_1>0$ and $\delta_2>0$, then $F$ is $\psi-$re\-gular at $(\bx,\by)$ with any $\delta'_1\in]0,\delta_1]$ and $\delta'_2>0$ such that
$\psi\iv(\de'_1)+\delta'_2\le\delta_2$.
\end{enumerate}
\end{theorem}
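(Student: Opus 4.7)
The plan is to reduce each of the three assertions to the corresponding metric characterization in Theorem~\ref{T4.1}, applied to the pair $\{\gph F,\,X\times\{\by\}\}$ with carefully chosen shifts or test points whose product structure encodes the quantity $d(x,F^{-1}(y))$ appearing in the definition of $\psi$-regularity. The crucial identity is $\Omega_1\cap\Omega_2=F^{-1}(\by)\times\{\by\}$, together with the observation that any translated pair $(\Omega_1-(x',w'))\cap(\Omega_2-(x'',\by'))$ reduces to a single fibre of $F^{-1}$ multiplied by one point in $Y$. The factor $1/2$ in $\psi(t):=\varphi(t/2)$ arises naturally by splitting a vertical displacement in $Y$ into two balanced halves of equal norm: the maximum of the two shift norms is then only half of the full displacement.

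For part (i), given $y\in Y$ with $\psi(d(y,\by))<\de$, the plan is to take $x_1:=(0,(y-\by)/2)$ and $x_2:=(0,(\by-y)/2)$, so that $\|x_1\|=\|x_2\|=\|y-\by\|/2$ and $\varphi(\max_i\|x_i\|)=\psi(d(y,\by))<\de$. A direct computation gives $(\Omega_1-x_1)\cap(\Omega_2-x_2)=F^{-1}(y)\times\{(y+\by)/2\}$, whose maximum-norm distance from $(\bx,\by)$ equals $\max\{d(\bx,F^{-1}(y)),\,\|y-\by\|/2\}$. Theorem~\ref{T4.1}(i), via inequality~\eqref{T1-1}, then yields $d(\bx,F^{-1}(y))\le\psi(d(y,\by))$, with finiteness of the right-hand side implicitly forcing $F^{-1}(y)\ne\es$.

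For part (ii), given $x\in B_{\de_2}(\bx)$ with $\psi(d(\by,F(x)))<\de'_1=\min\{\de_1,\psi(2\de_2)\}$, the second bound $\psi(d(\by,F(x)))<\psi(2\de_2)$ ensures $d(\by,F(x))<2\de_2$, so one can pick $w\in F(x)$ with $\|w-\by\|$ arbitrarily close to $d(\by,F(x))$ from above and still satisfying $\|w-\by\|<2\de_2$ and $\psi(\|w-\by\|)<\de_1$. At the test point $(x,(\by+w)/2)\in B_{\de_2}(\bx,\by)$, one verifies that $d((x,(\by+w)/2),\Omega_1)\le\|w-\by\|/2$ (using the neighbour $(x,w)\in\gph F$) and $d((x,(\by+w)/2),\Omega_2)=\|w-\by\|/2$. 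Theorem~\ref{T4.1}(ii)(a) then yields $\max\{d(x,F^{-1}(\by)),\,\|w-\by\|/2\}\le\varphi(\|w-\by\|/2)=\psi(\|w-\by\|)$, and letting $\|w-\by\|\downarrow d(\by,F(x))$ produces the required estimate.

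Part (iii) combines both ideas. Given $(x,y)$ with $\|x-\bx\|+\|y-\by\|<\de'_2$ and $\psi(d(y,F(x)))<\de'_1$, I would pick $w'\in F(x)$ close to $y$ (specifically with $\|w'-y\|<\psi\iv(\de'_1)$) and set $\omega_1:=(x,w')$, $\omega_2:=(\bx,\by)$, $x_1:=(0,(y-w')/2)$, $x_2:=(0,(w'-y)/2)$. The inequality $\psi\iv(\de'_1)+\de'_2\le\de_2$ is precisely what guarantees $\|w'-\by\|\le\|w'-y\|+\|y-\by\|<\psi\iv(\de'_1)+\de'_2\le\de_2$, so that $\omega_1\in\Omega_1\cap B_{\de_2}(\bx,\by)$; the other conditions $\omega_2\in\Omega_2\cap B_{\de_2}(\bx,\by)$ and $\varphi(\max_i\|x_i\|)=\psi(\|y-w'\|)<\de_1$ are then immediate. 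The translated intersection equals $(F^{-1}(y)-x)\times\{(y-w')/2\}$, and Theorem~\ref{T4.1}(iii) via~\eqref{T1-5}, followed by $\|w'-y\|\downarrow d(y,F(x))$, delivers $d(x,F^{-1}(y))\le\psi(d(y,F(x)))$. The main technical obstacle throughout is the bookkeeping of the $\de$-parameters: one must simultaneously place all auxiliary points inside the correct neighbourhoods while keeping $\varphi$ of the maximum shift small enough, and the inequality $\psi\iv(\de'_1)+\de'_2\le\de_2$ in part (iii) is precisely the single tightness condition that makes all these constraints mutually compatible.
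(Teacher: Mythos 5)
Your proposal is correct and follows essentially the same route as the paper's proof: the same halving of the vertical displacement (the shifts $(0,\pm(y-\by)/2)$, the midpoint test point $(x,(\by+w)/2)$, and the choice $(x',y')\in\gph F$ with $y'$ near $y$), together with identical parameter bookkeeping, including $\de'_1=\min\{\de_1,\psi(2\de_2)\}$ in (ii) and $\psi\iv(\de'_1)+\de'_2\le\de_2$ in (iii). The only cosmetic difference is that you route the argument through the metric characterizations of Theorem~\ref{T4.1} and explicit computations of the translated intersections, whereas the paper works directly from Definition~\ref{D1} with a limiting argument $\rho\downarrow\rho_0$; these are equivalent.
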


\begin{proof}
{Observe that $\psi\in\mathcal{C}$.}
\begin{enumerate}
\item
Let $\{\Omega_1,\Omega_2\}$ be $\varphi-$semitransversal at $(\bx,\by)$ with some $\delta>0$.
{By Definition~\ref{D1}(i), condition} \eqref{61} is satisfied for all $\rho\in]0,\delta[$ and $(u_1,v_1), (u_2,v_2)\in \varphi^{-1}(\rho)\B$.
Let $y\in Y$ with {$\rho_0:=\psi(\|y-\by\|)<\delta$.
Choose} a $\rho\in]\rho_0,\delta[$ and observe that
$$\varphi\left(\norm{\left(0,\frac{y-\by}{2}\right)}\right) =\varphi\left(\frac{\|y-\by\|}{2}\right)
{=\psi}
\left(\|y-\by\|\right)<\rho.$$
In view of \eqref{61}, we can find $(x_1,y_1) \in \gph F$ and $x_2\in X$ such that
\begin{align*}
(x_1,y_1)-\left(0,\frac{y-\by}{2}\right) =(x_2,\by)-\left(0,\frac{\by-y}{2}\right)\in B_{\rho}(\bx,\by).
\end{align*}
Hence, $y_1=\by+2\frac{y-\by}{2}=y$, $x_1 \in F^{-1}(y)$, $\|x_1-\bx\|<\rho$, and consequently, $d(\bx,F^{-1}(y))<\rho$.
Letting $\rho\downarrow\rho_0$, we obtain $d(\bx,F^{-1}(y)) \le
{\psi}(\|y-\by\|)$.
{By Definition~\ref{D3}(i)}, $F$ is $\psi-$se\-miregular at $(\bx,\by)$ with $\delta$.

\item
Let $\{\Omega_1,\Omega_2\}$ be $\varphi-$subtransversal at $(\bx,\by)$ with some $\delta_1>0$ and ${\delta_2>0}$.
By Definition~\ref{D1}(ii), $\gph F\cap(X\times\{\by\})\cap B_\rho(x,y)\ne\es$ for all $\rho\in]0,\de_1[$ and $(x,y)\in B_{\de_2}(\bx,\by)$ with $\varphi(d((x,y),\gph F))<\rho$ and $\varphi(\|y-\by\|)<\rho$.
Set ${\delta'_1:=\min\{\de_1,\psi(2\delta_2)\}}$.
Let $x\in B_{\de_2}(\bx)$ and $\psi(d(\by,F(x)))<\de'_1$.
Choose a $y\in F(x)$ such that
$\rho_0:=\psi(\|\by-y\|)<\delta'_1$, and a
$\rho\in]\rho_0,\delta'_1[$.
Set $\hat y:=\frac{y+\by}{2}$.
Observe that
\begin{gather*}
\|\hat y-y\|=\|\hat y-\by\|=\frac{\left\|\by-y\right\|}{2} =\frac{{\psi}\iv(\rho_0)}{2} <\frac{{\psi}\iv(\rho)}{2}{=}\varphi\iv(\rho),\\
\|\hat y-\by\|<\frac{{\psi}\iv(\rho)}{2} <\frac{{\psi}\iv(\de'_1)}{2}\le\de_2.
\end{gather*}
Thus, $\rho\in]0,\de_1[$, $(x,\hat y)\in B_{\de_2}(\bx,\by)$, $\varphi(d((x,\hat y),\gph F))\le\varphi(\|\hat y-y\|)<\rho$ and $\varphi(\|\hat y-\by\|)<\rho$.
Hence, $\gph F\cap(X\times\{\by\})\cap B_\rho(x,\hat y)\ne\es$, and consequently, $d(x,F^{-1}(\by)) < \rho$.
Letting $\rho\downarrow\rho_0$, we obtain $d(x,F^{-1}(\by)) \le\psi(\|\by-y\|)$.
Taking the infimum in the \RHS\ of this inequality over $y\in F(x)$, we conclude that
$F$ is $\psi-$subregular at $(\bx,\by)$ with $\delta'_1$ and $\delta_2$ {in view of Definition~\ref{D3}(ii)}.

\item
Let $\{\Omega_1,\Omega_2\}$ be $\varphi-$transversal at $(\bx,\by)$ with some $\delta_1>0$ and $\delta_2>0$, i.e. for all $\rho\in]0,\delta_1[$, $(x',y')\in\gph F\cap B_{\de_2}(\bx,\by)$, $u_1\in X$ and $v_1,v_2\in Y$ with $\varphi(\max\{\|u_1\|,\|v_1\|,\|v_2\|\})<\rho$, it holds
\begin{align*}
\left(\gph F-(x',y')-(u_1,v_1)\right)\cap \left(X\times\{-v_2\}\right)\cap(\rho\B)\neq \emptyset,
\end{align*}
or equivalently,
$d\left(x'+u_1,F\iv(y'+v_1-v_2)\right)<\rho$.
In other words, ${d\left(x,F\iv(y)\right)<\rho}$
for all $\rho\in]0,\delta_1[$, $(x',y')\in\gph F\cap B_{\de_2}(\bx,\by)$, $x\in X$ and $y\in Y$ with $\| x-x'\|<\varphi\iv(\rho)$ and $\|y-y'\|<2\varphi\iv(\rho)$.
Choose numbers $\delta'_1\in]0,\delta_1]$ and $\delta'_2>0$ such that $\psi\iv(\de'_1)+\delta'_2\le\delta_2$.
Let $x\in X$ and $y\in Y$ with $\|x-\bx\|+\|y-\by\|<\delta'_2$ and $\psi(d(y,F(x)))<\de'_1$.
Choose a ${y'\in F(x)}$ such that $\rho_0:=\psi(\|y-y'\|)<\de'_1$ and a $\rho\in]\rho_0,\delta'_1[$.
Then $\rho\in]0,\delta_1[$, $(x,y')\in\gph F$,
$\|x-\bx\|<\delta'_2<\delta_2$, $\|y'-\by\|\le\|y'-y\|+\|y-\by\| <\psi\iv(\de'_1)+\delta'_2\le\delta_2$ and $\|y-y'\|<\psi\iv(\rho)=2\varphi\iv(\rho)$.
Hence, $d\left(x,F\iv(y)\right)<\rho$.
Letting $\rho\downarrow\rho_0$, we obtain $d(x,F^{-1}(y)) \le\psi(\|y-y'\|)$.
Taking the infimum in the \RHS\ of this inequality over $y'\in F(x)$, we conclude that $F$ is $\psi-$regular at $(\bx,\by)$ with $\delta'_1$ and~$\delta'_2$ {in view of Definition~\ref{D3}(iii)}.
\qed\end{enumerate}
\end{proof}

The next corollary of Theorems~\ref{theo4} and \ref{T7.3} provides qualitative relations between the regularity and transversality properties.

\begin{corollary}
Let $X$ and $Y$ be normed spaces, $F:X\rightrightarrows Y$, $(\bx,\by)\in\gph F$, and $\varphi\in\mathcal{C}$.
Let $\Omega_1$ and $\Omega_2$ be defined by \eqref{15*}, {$\psi_1(t):=\varphi(2t)+t$ and $\psi_2(t):=\varphi(t/2)$ for all $t\ge 0$.}
\begin{enumerate}
\item
If $F$ is $\varphi-$(semi-/sub-)regular at $(\bx,\by)$, then $\{\Omega_1,\Omega_2\}$ is $\psi_1-$(semi-/sub-)trans\-versal at $(\bx,\by)$.

\item
If $\{\Omega_1,\Omega_2\}$ is $\varphi-$(semi-/sub-)transversal at $(\bx,\by)$, then $F$ is ${\psi_2}-$(semi-/sub-)re\-gular at $(\bx,\by)$.
\end{enumerate}
\end{corollary}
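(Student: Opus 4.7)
The plan is to deduce the corollary directly from Theorems~\ref{theo4} and \ref{T7.3} by discarding the quantitative estimates on the parameters. The assertions (i) and (ii) are, respectively, the qualitative ($\delta$-free) counterparts of parts (i)--(ii) of those theorems, so no new analytic work is required; the task is merely to verify that, in each case, suitable $\delta$'s can be chosen.

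For assertion (i), semi-case: if $F$ is $\varphi-$semiregular at $(\bx,\by)$, then by Definition~\ref{D3}(i) there is some $\delta>0$ witnessing the property, and Theorem~\ref{theo4}(i) yields a corresponding $\delta'>0$ such that $\{\Omega_1,\Omega_2\}$ is $\psi_1-$semitransversal at $(\bx,\by)$ with $\delta'$; discarding $\delta'$ gives the conclusion. For the sub-case, $\varphi-$subregularity of $F$ provides some $\delta_1,\delta_2>0$; Theorem~\ref{theo4}(ii) then furnishes $\psi_1-$subtransversality with any $\delta'_1,\delta'_2>0$ satisfying $\varphi(2\psi_1\iv(\delta'_1))\le\delta_1$ and $\psi_1\iv(\delta'_1)+\delta'_2\le\delta_2$. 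Since $\psi_1\in\mathcal{C}$ and $\varphi\in\mathcal{C}$, we have $\psi_1\iv(t)\to0$ as $t\downarrow0$ and $\varphi(s)\to0$ as $s\downarrow0$, so such $\delta'_1,\delta'_2>0$ exist, and this suffices.

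For assertion (ii), the semi-case follows immediately from Theorem~\ref{T7.3}(i), which preserves the value of $\delta$. For the sub-case, Theorem~\ref{T7.3}(ii) provides $\delta'_1:=\min\{\delta_1,\psi_2(2\delta_2)\}>0$ and $\delta_2>0$ for which $F$ is $\psi_2-$subregular; again simply dropping the explicit values yields the qualitative statement.

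The proof is essentially bookkeeping: no step presents a real obstacle, the only mild point is verifying that in the sub-case of (i) the two inequalities on $\delta'_1,\delta'_2$ are simultaneously satisfiable for small enough parameters, which follows at once from continuity of $\varphi$, $\psi_1$ and their inverses at $0$ together with $\varphi(0)=\psi_1(0)=0$.
\qed
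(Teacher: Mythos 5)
Your proposal is correct and follows exactly the route the paper intends: the corollary is stated as a direct consequence of Theorems~\ref{theo4} and \ref{T7.3}, and all that is needed is the bookkeeping you carry out, namely checking that the quantitative parameter constraints in those theorems are satisfiable for some positive $\delta$'s (which follows from $\psi_1,\varphi\in\mathcal{C}$, hence $\psi_1\iv(t)\to0$ and $\varphi(s)\to0$ as the arguments tend to $0$). One point of coverage, however: in this paper the notation ``$\varphi-$(semi-/sub-)regular'' refers to \emph{three} properties --- semiregularity, subregularity, and (plain) regularity --- and likewise for ``(semi-/sub-)transversal'' (cf.\ the convention introduced after Definition~\ref{D0} and its use in Proposition~\ref{P2.3} and Corollary~\ref{C7.4}). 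You treat only the semi- and sub-cases; the remaining case (regularity $\Rightarrow$ transversality and conversely) must be read off from parts (iii) of Theorems~\ref{theo4} and \ref{T7.3}, where the admissible parameters are $\delta_1'>0$, $\delta_2'>0$ with $\varphi(2\psi_1\iv(\delta_1'))\le\delta_1$ and $\psi_1\iv(\delta_1')+\delta_2'\le\delta_2/2$ in one direction, and $\delta_1'\in]0,\delta_1]$, $\delta_2'>0$ with $\psi_2\iv(\delta_1')+\delta_2'\le\delta_2$ in the other; their existence follows by exactly the same continuity argument you already gave for the sub-case, so the omission is easily repaired but should be stated.
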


The next statement addresses the H\"older setting.
It is a consequence of Theorems~\ref{theo4} and \ref{T7.3} with $\varphi(t):=\al\iv t^q$ for some $\al>0$, $q>0$ and all $t\ge 0$.

\begin{corollary}\label{C7.3}
Let $X$ and $Y$ be normed spaces, $F:X\rightrightarrows Y$, $(\bx,\by)\in\gph F$, ${\al>0}$ and $q>0$.
Let $\Omega_1$ and $\Omega_2$ be defined by \eqref{15*}, $\al_1:=2^{-q}\al$, $\al_2:=2^{q}\al$, and ${\psi(t):=\al_1\iv t^q+t}$ {for all $t\ge 0$}.
\begin{enumerate}
\item
If $F$ is $\al-$semiregular of order $q$ at $(\bx,\by)$ with some $\delta>0$, then $\{\Omega_1,\Omega_2\}$ is $\psi-$se\-mitransversal at $(\bx,\by)$ with $\delta':=\de+(\al\de)^{\frac{1}{q}}/2$.

If $\{\Omega_1,\Omega_2\}$ is $\al-$semitransversal of order $q$ at $(\bx,\by)$ with some $\delta>0$, then $F$ is ${\al_2}-$semiregular of order $q$ at $(\bx,\by)$ with ${\delta}$.

\item
Let $q\le1$.
If $F$ is $\al-$subregular of order $q$ at $(\bx,\by)$ with some $\delta_1>0$ and $\delta_2>0$, then $\{\Omega_1,\Omega_2\}$ is $\psi-$sub\-transversal at $(\bx,\by)$ with any $\delta'_1>0$ and $\delta'_2>0$ such that $(2\psi\iv(\de'_1))^q\le\al\de_1$ and
${\psi^{-1}(\de'_1)+\delta'_2}\le\delta_2$.

If $\{\Omega_1,\Omega_2\}$ is $\al-$subtransversal of order $q$ at $(\bx,\by)$ with some $\delta_1>0$ and $\delta_2>0$,
then $F$ is ${\al_2}-$subregular of order $q$ at $(\bx,\by)$ with $\delta'_1:=\min\{\de_1,\al\iv\delta_2^q\}$ and $\de_2$.	
	
\item
Let $q\le1$.
If $F$ is $\al-$regular of order $q$ at $(\bx,\by)$ with some $\delta_1>0$ and $\delta_2>0$, then $\{\Omega_1,\Omega_2\}$ is $\psi-$transversal at $(\bx,\by)$ with any $\delta'_1>0$ and $\delta'_2>0$ such that
$(2\psi\iv(\de'_1))^q\le\al\de_1$ and
${\psi^{-1}(\de'_1)+\delta'_2}\le\delta_2/2$.

If $\{\Omega_1,\Omega_2\}$ is $\al-$transversal of order $q$ at $(\bx,\by)$ with some $\delta_1>0$ and $\delta_2>0$,
then $F$ is $\al_2-$re\-gular of order $q$
at $(\bx,\by)$ with
any $\delta'_1\in]0,\delta_1]$ and $\delta'_2>0$ such that $2(\al\de'_1)^{\frac{1}{q}}+\delta'_2\le\delta_2$.
\end{enumerate}
\end{corollary}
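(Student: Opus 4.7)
The plan is to apply Theorems~\ref{theo4} and \ref{T7.3} directly with the H\"older choice $\varphi(t):=\al\iv t^q$ and to verify that all the auxiliary quantities reduce to the explicit constants appearing in the statement. First, observe that under this choice $\varphi\iv(t)=(\al t)^{1/q}$ and $\varphi\in\mathcal{C}^1$, so both theorems apply.

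For the forward implications (regularity $\Rightarrow$ transversality) coming from Theorem~\ref{theo4}, the auxiliary function defined there is $t\mapsto\varphi(2t)+t=\al\iv 2^q t^q+t$. Setting $\al_1:=2^{-q}\al$, this is precisely $\al_1\iv t^q+t=\psi(t)$, which coincides with the $\psi$ stated in Corollary~\ref{C7.3}. In part (i) the constant $\delta+\varphi\iv(\delta)/2$ becomes $\delta+(\al\delta)^{1/q}/2$, as claimed. In parts (ii) and (iii), the inequality $\varphi(2\psi\iv(\delta_1'))\le\delta_1$ reads $(2\psi\iv(\delta_1'))^q\le\al\delta_1$, matching the statement; the condition $\psi\iv(\delta_1')+\delta_2'\le\delta_2$ (respectively $\le\delta_2/2$) is reproduced verbatim.

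For the reverse implications (transversality $\Rightarrow$ regularity) coming from Theorem~\ref{T7.3}, the auxiliary function is $t\mapsto\varphi(t/2)=\al\iv 2^{-q}t^q=\al_2\iv t^q$ with $\al_2:=2^q\al$, so $\psi_2$-semiregularity/subregularity/regularity of order $1$ in the sense of Definition~\ref{D3} coincides with $\al_2$-semiregularity/subregularity/regularity of order $q$. Its inverse is $\psi_2\iv(t)=(\al_2 t)^{1/q}=2(\al t)^{1/q}$. In part (ii) of Theorem~\ref{T7.3}, $\delta_1'=\min\{\delta_1,\psi_2(2\delta_2)\}=\min\{\delta_1,\al_2\iv(2\delta_2)^q\}=\min\{\delta_1,\al\iv\delta_2^q\}$. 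In part (iii), the condition $\psi_2\iv(\delta_1')+\delta_2'\le\delta_2$ becomes $2(\al\delta_1')^{1/q}+\delta_2'\le\delta_2$, again as stated.

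The assumption $q\le1$ in parts (ii) and (iii) is inherited through Remark~\ref{R2}, since the $\al$-subtransversality and $\al$-transversality properties of order $q$ are only meaningful in that range when $\bx\in\bd\cap_{i=1}^n\Omega_i$. There is no conceptual obstacle here; the entire proof is a routine substitution, and the only care required is bookkeeping the three sets of parameters $(\al,q)$, $(\al_1,\al_2)$ and the $\delta$'s through the computations of $\varphi\iv$, $\psi\iv$ and $\psi_2\iv$. Accordingly the proof can be presented in one line for each of the six implications, simply exhibiting the specialization.
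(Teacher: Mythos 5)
Your proposal is correct and follows exactly the paper's route: Corollary~\ref{C7.3} is obtained by specializing Theorems~\ref{theo4} and \ref{T7.3} to $\varphi(t):=\al\iv t^q$, and your computations of $\psi$, $\varphi\iv$, $\psi\iv$ and the resulting constants $\al_1$, $\al_2$ and the $\de$'s all check out. (The only quibble is the phrase ``of order $1$'' when identifying $\psi_2$-regularity with $\al_2$-regularity of order $q$, which is a harmless slip.)
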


In view of Corollary~\ref{C7.3}, H\"older transversality properties of $\{\Omega_1,\Omega_2\}$ imply the corresponding H\"older regularity properties of $F$, while H\"older regularity properties of $F$ {imply} certain `H\"older-type' transversality properties of $\{\Omega_1,\Omega_2\}$ determined by the function $\psi$.
Utilizing Proposition~\ref{P2.3}, they can be approximated by proper H\"older (or even linear) transversality properties.

\begin{corollary}\label{C7.4}
Let $X$ and $Y$ be normed spaces, $F:X\rightrightarrows Y$, $(\bx,\by)\in\gph F$, $\al>0$ and $q>0$.
Let $\Omega_1$ and $\Omega_2$ be defined by \eqref{15*} and $\al_1:=2^{-q}\al$.
If
$F$ is $\al-$(semi-/sub-) transversal at $(\bx,\by)$, then $\{\Omega_1,\Omega_2\}$ is $\al'-$(semi-/sub-)transversal of order $q'$ at $\bx$, where:
\begin{enumerate}
\item
if $q<1$, then $q'=q$ and $\al'$ is any number in $]0,\al_1[$;
\item
if $q=1$, then $q'=1$ and $\al':=(1+\al_1\iv)\iv$;
\item
if $q>1$, then $q'=1$ and $\al'$ is any number in $]0,1[$.
\end{enumerate}
\end{corollary}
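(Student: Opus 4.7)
The plan is to derive this corollary as a two-step composition of the previously established results: first Corollary~\ref{C7.3} converts nonlinear regularity of $F$ into $\psi$-transversality of $\{\Omega_1,\Omega_2\}$ where $\psi$ is a H\"older-type function, and then Proposition~\ref{P2.3} converts $\psi$-transversality into genuine H\"older transversality of the appropriate order with the stated modulus bounds. I read the hypothesis ``$F$ is $\al-$(semi-/sub-)transversal'' as ``$F$ is $\al-$(semi-/sub-)regular of order $q$'' (a typographical slip, as transversality is defined only for collections of sets).

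First I would invoke Corollary~\ref{C7.3}. Starting from the assumption that $F$ is $\al-$(semi-/sub-)regular of order $q$ at $(\bx,\by)$, parts (i) and (ii) of Corollary~\ref{C7.3} give us that $\{\Omega_1,\Omega_2\}$ is $\psi-$(semi-/sub-)transversal at $(\bx,\by)$, with
\[
\psi(t) := \al_1^{-1}t^q + t, \qquad t\ge 0,
\]
where $\al_1 = 2^{-q}\al$. At the qualitative level (``with some $\delta>0$'') this step does not require us to keep track of the concrete estimates for $\delta'_1,\delta'_2$ produced by Corollary~\ref{C7.3}.

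Next I would rewrite $\psi$ in the canonical form used by Proposition~\ref{P2.3}: since
\[
\psi(t) = \al_1^{-1}\bigl(t^q + \al_1 t\bigr),
\]
we are in the setting of Proposition~\ref{P2.3} with parameters $\alpha = \al_1$ and $\beta = \al_1$. Applying that proposition to $\{\Omega_1,\Omega_2\}$ yields $\al'-$(semi-/sub-)transversality of order $q'$ at $(\bx,\by)$, with the moduli computed as follows:
in case $q<1$, the proposition gives $q'=q$ and any $\al'\in\,]0,\al_1[$, matching (i);
in case $q=1$, it gives $q'=1$ and $\al' = \al_1(1+\al_1)^{-1} = (1+\al_1^{-1})^{-1}$, matching (ii);
in case $q>1$, it gives $q'=1$ and any $\al'\in\,]0,\al_1/\al_1[\,=\,]0,1[$, matching (iii).

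No real obstacle arises: the result is a straightforward concatenation of two prior lemmas, and the only content is verifying that the modulus bookkeeping for $\psi$ under Proposition~\ref{P2.3} reproduces the three cases in the statement. The mildly delicate point is simply recognizing the identification $\alpha=\beta=\al_1$ so that Proposition~\ref{P2.3}(iii) yields the clean upper bound $1$ in case $q>1$.
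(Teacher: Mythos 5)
Your proposal is correct and follows exactly the route the paper intends: Corollary~\ref{C7.3} yields $\psi$-transversality of $\{\Omega_1,\Omega_2\}$ with $\psi(t)=\al_1\iv t^q+t=\al_1\iv(t^q+\al_1 t)$, and Proposition~\ref{P2.3} with $\al=\be=\al_1$ then gives precisely the three cases, including the identity $\al_1(1+\al_1)\iv=(1+\al_1\iv)\iv$ in case (ii). Your reading of the hypothesis as $\al$-(semi-/sub-)regularity of order $q$ is also the intended one.
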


Thanks to Corollaries \ref{C7.3} and \ref{C7.4}, in the case $q\in]0,1]$ we have full equivalence between the two sets of properties.
The following corollary recaptures \cite[Proposition~10]{KruTha14}.

\begin{corollary}
Let $X$ and $Y$ be normed spaces, $F:X\rightrightarrows Y$, $(\bx,\by)\in\gph F$, and $q\in]0,1]$.
Let $\Omega_1$ and $\Omega_2$ be defined by \eqref{15*}.
\begin{enumerate}
\item
$\{\Omega_1,\Omega_2\}$ is $\al-$semitransversal of order $q$ at $(\bx,\by)$ if and only if $F$ is semiregular of order $q$ at $(\bx,\by)$.
Moreover,
\begin{align*}
\dfrac{{\rm s_{e}rg}_q[F](\bx,\by)}{{\rm s_{e}rg}_q[F](\bx,\by)+2^q}
\le{\rm s_{e}tr}_q[\Omega_1,\Omega_2](\bx)\le\dfrac{{\rm s_{e}rg}_q[F](\bx,\by)}{2^q}.
\end{align*}

\item
$\{\Omega_1,\Omega_2\}$ is $\al-$subtransversal of order $q$ at $(\bx,\by)$ if and only if $F$ is subregular of order $q$ at $(\bx,\by)$.
Moreover,
\begin{align*}
\dfrac{{\rm srg}_q[F](\bx,\by)}{{\rm srg}_q[F](\bx,\by)+2^q}
\le{\rm str}_q[\Omega_1,\Omega_2](\bx)\le\dfrac{{\rm srg}_q[F](\bx,\by)}{2^q}.
\end{align*}

\item
$\{\Omega_1,\Omega_2\}$ is $\al-$transversal of order $q$ at $(\bx,\by)$ if and only if $F$ is regular of order $q$ at $(\bx,\by)$.
Moreover,
\begin{align*}
\dfrac{{\rm rg}_q[F](\bx,\by)}{{\rm rg}_q[F](\bx,\by)+2^q}
\le {\rm tr}_q[\Omega_1,\Omega_2](\bx)\le\dfrac{{\rm rg}_q[F](\bx,\by)}{2^q}.
\end{align*}
\end{enumerate}
\end{corollary}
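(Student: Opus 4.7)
The statement is essentially the specialization of Corollary~\ref{C7.3} to the H\"older setting with $q\in\,]0,1]$, together with an extraction of quantitative estimates for the moduli. My plan is to treat the three parts in parallel, since in each case the equivalence and the two inequalities follow from the same two ingredients: the two implications already packaged in Corollary~\ref{C7.3}, and the conversion of ``H\"older-type'' semi\slash sub\slash transversality (determined by $\psi$) into proper H\"older semi\slash sub\slash transversality of order $q$ via Proposition~\ref{P2.3}. The equivalences themselves are then immediate from the existence of finite positive moduli on each side.

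For concreteness, consider part~(i). Set $s:={\rm s_e rg}_q[F](\bx,\by)$ and $t:={\rm s_e tr}_q[\Omega_1,\Omega_2](\bx)$. To obtain the upper bound $t\le s/2^q$, I fix an arbitrary $t'\in\,]0,t[$; by Corollary~\ref{C7.3}(i) (second implication), $F$ is $2^q t'$-semiregular of order $q$ at $(\bx,\by)$, so $s\ge 2^q t'$, and letting $t'\uparrow t$ gives the claimed inequality. For the lower bound $t\ge s/(s+2^q)$, I fix an arbitrary $s'\in\,]0,s[$; by Corollary~\ref{C7.3}(i) (first implication), $\{\Omega_1,\Omega_2\}$ is $\psi$-semitransversal at $(\bx,\by)$, where $\psi(\tau)=2^q(s')^{-1}\tau^q+\tau=\al^{-1}(\tau^q+\be\tau)$ with $\al=\be=s'/2^q$. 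Applying Proposition~\ref{P2.3}: if $q=1$, case (ii) yields $\al-$semitransversality with $\al(1+\be)^{-1}=s'/(s'+2)=s'/(s'+2^q)$; if $q<1$, case (i) yields $\al'-$semitransversality of order $q$ for every $\al'\in\,]0,s'/2^q[$, and since $s'/(s'+2^q)<s'/2^q$, the value $\al'=s'/(s'+2^q)$ is admissible. In both cases, $t\ge s'/(s'+2^q)$, and continuity/monotonicity of $s'\mapsto s'/(s'+2^q)$ allows me to let $s'\uparrow s$.

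Parts (ii) and (iii) will be handled identically, replacing ``semi'' moduli and Corollary~\ref{C7.3}(i) with their subtransversality/subregularity and transversality/regularity counterparts, i.e.\ Corollary~\ref{C7.3}(ii) and Corollary~\ref{C7.3}(iii) respectively. The only bookkeeping difference is that both $\de_1$ and $\de_2$ appear, but since the moduli are defined as suprema over \emph{some} choice of the $\de$'s, the estimates $(2\psi^{-1}(\de_1'))^q\le\al\de_1$ and $\psi^{-1}(\de_1')+\de_2'\le\de_2$ (or $\le\de_2/2$) are automatically satisfied by shrinking $\de_1'$ and $\de_2'$, and thus do not enter the modulus bound. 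The equivalences follow from the fact that each displayed inequality forces the corresponding modulus to be positive\slash infinite on the other side, so finiteness and positivity transfer in both directions.

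The only genuinely delicate point is the uniform lower bound $s/(s+2^q)$ across $q\in\,]0,1]$. For $q<1$, Proposition~\ref{P2.3}(i) actually produces the strictly sharper value $s/2^q$; I just have to observe that $s/2^q\ge s/(s+2^q)$ holds trivially, so the weaker uniform expression is still valid and unifies the two regimes. No new analytic input beyond Proposition~\ref{P2.3} and Corollary~\ref{C7.3} is needed; the proof is essentially a limit passage and an arithmetic simplification.
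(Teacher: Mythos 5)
Your proposal is correct and follows essentially the route the paper intends: the corollary is obtained by specializing Corollary~\ref{C7.3} (itself a consequence of Theorems~\ref{theo4} and \ref{T7.3}) and converting the resulting H\"older-type function $\psi(\tau)=2^q(s')^{-1}\tau^q+\tau$ into proper H\"older transversality via Proposition~\ref{P2.3} with $\al=\be=s'/2^q$, followed by the limit passages $t'\uparrow t$ and $s'\uparrow s$. The arithmetic checks out in both regimes ($q=1$ gives exactly $s'/(s'+2)$; $q<1$ gives any $\al'<s'/2^q$, which dominates $s'/(s'+2^q)$), and your handling of the $\de$-parameters as existentially quantified in the moduli is the right observation.
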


\subsection{Transversality of a Mapping to a Set in the Range Space}\label{S6.2}

Finally, we briefly discuss {metric characterizations of} nonlinear extensions of the new \emph{transversality properties of a \SVM\ to a set in the range space} due to {Ioffe \cite{Iof16,Iof17}}.
{For geometric and subdifferential/normal cone characterizations of the properties, we refer the reader to \cite{CuoKru4,CuoKru5,CuoKru6}.}
In the rest of {this section}, $F:X\rightrightarrows Y$ is a \SVM\ between normed spaces, $(\bx,\by)\in\gph F$, $S$ is a subset of $Y$, $\by\in S$, and $\varphi\in\mathcal{C}$.

\begin{definition}\label{D4}
The mapping $F$ is
\begin{enumerate}
\item
$\varphi-$semitransversal to $S$ at $(\bx,\by)$ if $\{\gph F,X\times S\}$ is $\varphi-$semi\-transversal at $(\bx,\by)$, i.e. there exists a $\delta>0$ such that
\begin{align*}
\left(\gph F-(u_1,v_1) \right)
\cap \left(X \times(S-v_2)\right)\cap B_\rho(\bx,\by)\neq\emptyset
\end{align*}
for all $\rho \in ]0,\delta[$, $u_1\in X$, $v_1,v_2\in Y$ with $\varphi\left(\max\{\|u_1\|,\|v_1\|,\|v_2\|\}\right)<\rho$;

\item
$\varphi-$subtransversal to $S$ at $(\bx,\by)$ if $\{\gph F,X\times S\}$ is $\varphi-$subtransversal at $(\bx,\by)$, i.e. there exist $\delta_1>0$ and $\de_2>0$ such that
\begin{align*}
\gph F\cap(X\times S)\cap B_\rho(x,y)\ne\es
\end{align*}
for all $\rho\in]0,\de_1[$ and $(x,y)\in B_{\de_2}(\bx,\by)$ with $\varphi(\max\{d((x,y),\gph F),d(y,S)\})<\rho$;

\item
$\varphi-$transversal to $S$ at $(\bx,\by)$ if $\{\gph F,X\times S\}$ is $\varphi-$transversal at $(\bx,\by)$, i.e. there exist $\delta_1>0$ and $\de_2>0$ such that
\begin{align*}
\left(\gph F-(x_1,y_1)-(u_1,v_1)\right)
\cap\left(X\times(S-y_2-v_2)\right)\cap(\rho\B)\neq \emptyset
\end{align*}
for all $\rho \in ]0,\delta_1[$, $(x_1,y_1)\in\gph F\cap B_{\delta_2}(\bar{x},\by)$, $y_2\in S\cap B_{\delta_2}(\by)$, $u_1\in X$, $v_1,v_2\in Y$ with $\varphi\left(\max\{\|u_1\|,\|v_1\|,\|v_2\|\}\right)<\rho$.
\end{enumerate}
\end{definition}

The two-set model $\{\gph F,X\times S\}$ employed in Definition~\ref{D4} is an extension of the model \eqref{15*}, which corresponds to the case when $S$ is a singleton: $S:=\{\by\}$.

The metric characterizations of the properties in the next two statements are consequences of Theorems~\ref{T4.1} and \ref{P3.5}, respectively.
{Each characterization can be used as an equivalent definition for the respective property.}

\begin{corollary}\label{C5.6}
The mapping $F$ is
\begin{enumerate}
\item
$\varphi-$semitransversal to $S$ at $(\bx,\by)$ with some $\delta>0$ if and only if
\begin{align*}
d\big((\bx,\by),(\gph F-(x_1,y_1))\cap(X\times (S-y_2))\big) \le\varphi\left(\max\{\|x_1\|,\|y_1\|,\|y_2\|\}\right)
\end{align*}
for all $x_1\in X$, $y_1,y_2\in Y$ with $\varphi(\max\{\|x_1\|,\|y_1\|,\|y_2\|\})<\delta$;
\item
is $\varphi-$subtransversal to $S$ at $(\bx,\by)$ with some $\de_1>0$ and $\de_2>0$ if and only if
the following equivalent conditions hold:
\begin{enumerate}
\item
for all $(x,y)\in B_{\delta_2}(\bar{x},\by)$ with $\varphi\big(\max\left\{d((x,y),\gph F),d(y,S)\right\}\big)<\de_1$, it holds
\begin{align}\notag
d\left((x,y),\gph F\cap(X\times S)\right)\le \varphi\left(\max\left\{d((x,y),\gph F),d(y,S)\right\}\right);
\end{align}
\item
for all $(x_1,y_1)\in\gph F\cap B_{\delta_2}(\bar{x},\by)$, $y_2\in S\cap B_{\delta_2}(\by)$ and $u_1\in X$, $v_1,v_2\in Y$ with $\varphi\left(\max\{\|u_1\|,\|v_1\|,\|v_2\|\}\right)<\de_1$ and $x_1+u_1\in B_{\de_2}(\bx)$,
$y_1+v_1=y_2+v_2\in B_{\de_2}(\by)$,
it holds
\begin{multline}\label{T8-3}
d\left((0,0),(\gph F-(x_1,y_1)-(u_1,v_1))\cap(X \times (S-y_2-v_2))\right)\\
\le\varphi\left(\max\{\|u_1\|,\|v_1\|,\|v_2\|\}\right);
\end{multline}
\end{enumerate}
\item
$\varphi-$transversal to $S$ at $(\bx,\by)$ with some $\de_1>0$ and $\de_2>0$ if and only if inequality \eqref{T8-3} holds for all $(x_1,y_1)\in\gph F\cap B_{\delta_2}(\bar{x},\by)$, ${y_2\in S\cap B_{\delta_2}(\by)}$ and $u_1\in X$, $v_1,v_2\in Y$ with $\varphi\left(\max\{\|u_1\|,\|v_1\|,\|v_2\|\}\right)<\de_1$.
\end{enumerate}
\end{corollary}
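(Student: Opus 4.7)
The plan is to apply Theorem~\ref{T4.1} directly to the pair $\{\Omega_1,\Omega_2\}$ with $\Omega_1:=\gph F$ and $\Omega_2:=X\times S$ in the product space $X\times Y$ (equipped with the maximum norm), and match the resulting formulas with those in the statement. Since $\varphi-$(semi-/sub-)transversality of $F$ to $S$ at $(\bx,\by)$ is \emph{defined} in Definition~\ref{D4} as the corresponding transversality of $\{\gph F,X\times S\}$ at $(\bx,\by)$, and Theorem~\ref{T4.1} already gives ``if and only if'' metric characterizations for arbitrary finite collections, nothing beyond careful bookkeeping should be required.

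The key structural observation is that $X\times S$ is invariant under translations in the $X$-direction: for any $(u_2,v_2)\in X\times Y$,
\[
(X\times S)-(u_2,v_2)=X\times(S-v_2),\qquad d\bigl((x,y),X\times S\bigr)=d(y,S).
\]
Therefore, whenever Theorem~\ref{T4.1} requires a shift $(u_2,v_2)$ of $\Omega_2$, its $X$-component $u_2$ has no effect on the shifted set, and the choice $u_2:=0$ collapses $\max\{\|u_1\|,\|v_1\|,\|u_2\|,\|v_2\|\}$ to $\max\{\|u_1\|,\|v_1\|,\|v_2\|\}$. Likewise, whenever a base point $\omega_2=(x_2,y_2)\in(X\times S)\cap B_{\de_2}(\bx,\by)$ is needed, the $X$-component $x_2$ is irrelevant for the shifted set and may be chosen freely to accommodate any coupling constraints.

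With this in mind, part~(i) is the specialization of Theorem~\ref{T4.1}(i) with shifts $(x_1,y_1)$ of $\gph F$ and $(0,y_2)$ of $X\times S$, noting that $\max\{\|(x_1,y_1)\|,\|(0,y_2)\|\}=\max\{\|x_1\|,\|y_1\|,\|y_2\|\}$. Part~(ii)(a) is the specialization of Theorem~\ref{T4.1}(ii)(a) via $d((x,y),X\times S)=d(y,S)$. Part~(ii)(b) follows from Theorem~\ref{T4.1}(ii)(b) by taking $\omega_1:=(x_1,y_1)$, $\omega_2:=(x_1+u_1,y_2)$, and shifts $(u_1,v_1)$, $(0,v_2)$: the coupling equality $\omega_1+(u_1,v_1)=\omega_2+(0,v_2)\in B_{\de_2}(\bx,\by)$ unpacks exactly to $y_1+v_1=y_2+v_2\in B_{\de_2}(\by)$ together with $x_1+u_1\in B_{\de_2}(\bx)$; the equivalence with~(a) is inherited from Theorem~\ref{T4.1}(ii). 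Part~(iii) is the specialization of Theorem~\ref{T4.1}(iii) with $\omega_1:=(x_1,y_1)\in\gph F\cap B_{\de_2}(\bx,\by)$, $\omega_2:=(\bx,y_2)$ for $y_2\in S\cap B_{\de_2}(\by)$, and shifts $(u_1,v_1)$, $(0,v_2)$.

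The only genuinely delicate point is the converse direction in part~(ii)(b): one must check that an arbitrary admissible configuration $((x_1,y_1),y_2,u_1,v_1,v_2)$ in the corollary's statement indeed arises from some admissible $(\omega_1,\omega_2,(u_1',v_1'),(u_2',v_2'))$ in Theorem~\ref{T4.1}(ii)(b). This is handled by the reparametrization $x_2:=x_1+u_1$, $u_2:=0$ together with the invariance of $X\times S$ under $X$-translations; no nontrivial estimates are required.
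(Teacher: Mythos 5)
Your proposal is correct and coincides with the paper's own (one-line) proof: the paper obtains Corollary~\ref{C5.6} precisely by specializing Theorem~\ref{T4.1} to the pair $\{\gph F,\,X\times S\}$ in $X\times Y$ with the maximum norm, and your bookkeeping via the $X$-translation invariance of $X\times S$ (taking $u_2:=0$ and choosing the free $X$-component of $\omega_2$ to satisfy the coupling) is exactly the intended argument.
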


\begin{corollary}\label{C7.13}
Let $\de_1>0$ and $\de_2>0$.
The following conditions are equivalent:
\begin{enumerate}
\item
for all $(x_1,y_1)\in\gph F\cap B_{\delta_2}(\bar{x},\by)$, $y_2\in S\cap B_{\delta_2}(\by)$ and $u_1\in X$, $v_1,v_2\in Y$ with $x_1+u_1\in B_{\de_2}(\bx)$, $y_1+v_1,y_2+v_2\in B_{\de_2}(\by)$ and $\varphi\left(\max\{\|u_1\|,\|v_1\|,\|v_2\|\}\right)<\de_1$, inequality \eqref{T8-3} holds true;
\item
for all $x_1,y_1,y_2\in\de_2\B$ with
$\varphi(\max\{d((\bx,\by),\gph F-(x_1,y_1)),d(\by,S-y_2)\})<\de_1$, it holds
\begin{multline}
d\left((\bx,\by),(\gph F-(x_1,y_1))\cap(X\times (S-y_2))\right)\\
\le\varphi\left(\max\{d((\bx,\by),\gph F-(x_1,y_1)),
d(\by,S-y_2)\}\right);
\end{multline}
\item
for all $x,x_1\in X$, $y,y_1,y_2\in Y$ such that $x+x_1\in B_{\de_2}(\bx)$, $y+y_1,y+y_2\in B_{\de_2}(\by)$ and $\varphi\left(\max\{d((x,y),\gph F-(x_1,y_1)),
d(y,S-y_2)\}\right)<\de_1$, it holds
\begin{multline*}
d\big((x,y),(\gph F-(x_1,y_1))\cap(X\times(S-y_2))\big)\\
\le\varphi\big(\max\{d((x,y),\gph F-(x_1,y_1)),
d(y,S-y_2)\}\big).
\end{multline*}
\end{enumerate}

Moreover, if $F$ is $\varphi-$transversal to $S$ at $(\bx,\by)$ with some $\delta_1>0$ and $\de_2>0$, then conditions {\rm (i)--(iii)} hold with any $\de_1'\in]0,\de_1]$ and $\de_2'>0$ satisfying ${\varphi\iv(\de_1')+\de_2'}\le \de_2$ in place of $\de_1$ and $\de_2$.

Conversely, if conditions {\rm (i)--(iii)} hold with some $\de_1>0$ and $\de_2>0$, then $F$ is $\varphi-$tra\-ns\-versal to $S$ at $(\bx,\by)$ with any $\de_1'\in ]0,\de_1]$ and $\de_2'>0$ satisfying ${\varphi\iv(\de_1')+\de_2'\le \de_2}$.
\end{corollary}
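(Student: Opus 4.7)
The plan is to recognise Corollary~\ref{C7.13} as the specialisation of Theorem~\ref{P3.5} to the two-set collection $\{\Omega_1, \Omega_2\} := \{\gph F,\ X \times S\}$ in the product normed space $X \times Y$ (equipped with the maximum norm), at the base point $(\bx, \by) \in \Omega_1 \cap \Omega_2$. By Definition~\ref{D4}(iii), $F$ is $\varphi-$transversal to $S$ at $(\bx, \by)$ precisely when this collection is $\varphi-$transversal at $(\bx, \by)$, so the two ``Moreover'' assertions of Corollary~\ref{C7.13} will be inherited directly from Theorem~\ref{P3.5} once the conditions (i), (ii), (iii) here are matched with conditions (i), (ii), (iii) of Theorem~\ref{P3.5}.

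The bulk of the work is the translation. Writing $\omega_1 = (x_1, y_1) \in \gph F$, $\omega_2 = (x_2, y_2) \in X \times S$, and perturbations $\xi_1 = (u_1, v_1)$, $\xi_2 = (u_2, v_2)$ in $X \times Y$, the key structural observation is that $\Omega_2 = X \times S$ is invariant under $X$-translations, hence
\begin{align*}
\Omega_2 - \omega_2 - \xi_2 = X \times (S - y_2 - v_2),
\end{align*}
which is independent of $x_2$ and $u_2$; similarly $d((\bx, \by), \Omega_2 - \xi_2) = d(\by, S - v_2)$ depends only on the $Y$-component of $\xi_2$. Thus in each of the three conditions of Theorem~\ref{P3.5} one may set $x_2 := \bx$ and $u_2 := 0$ without altering any distance or inequality involved, eliminating these variables from the quantification. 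Decomposing the maximum-norm constraints on $\omega_i + \xi_i$, on $\xi_i$, and on $x + \xi_i$ respectively into their $X$- and $Y$-components then produces exactly the constraint sets of (i), (ii), (iii); the norm constraint $\varphi(\max\{\|\xi_1\|, \|\xi_2\|\}) < \de_1$ becomes $\varphi(\max\{\|u_1\|, \|v_1\|, \|v_2\|\}) < \de_1$; and the inequality \eqref{T1-5} of Theorem~\ref{P3.5} becomes \eqref{T8-3} in our setting, with the analogous reductions yielding the metric inequalities of (ii) and (iii).

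I do not foresee a genuine obstacle: the entire argument is a careful but mechanical unpacking. The only point to watch is the asymmetry between the two perturbation groups --- $(u_1, v_1)$ appears in full on the graph side while only $v_2$ appears on the set side --- which merely reflects the triviality of $u_2$ just exploited. With the three specialisations in place, the two ``Moreover'' clauses of Corollary~\ref{C7.13} follow at once from the corresponding clauses of Theorem~\ref{P3.5} together with Definition~\ref{D4}(iii), with the same relation $\varphi\iv(\de_1') + \de_2' \le \de_2$ between the original and adjusted parameters preserved in both directions.
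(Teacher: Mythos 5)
Your proposal is correct and follows exactly the route the paper takes: the paper derives Corollary~\ref{C7.13} by applying Theorem~\ref{P3.5} to the pair $\{\gph F, X\times S\}$ in $X\times Y$ at $(\bx,\by)$, with Definition~\ref{D4}(iii) supplying the identification of $\varphi-$transversality of $F$ to $S$ with $\varphi-$transversality of that pair. Your observation that the $X$-translation invariance of $X\times S$ eliminates the $x_2$ and $u_2$ variables is precisely the (unstated) crux of the paper's reduction.
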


\begin{remark}
In the linear case, i.e. when $\varphi(t):=\al t$ for some $\al>0$ and all $t\ge0$,
in view of {Corollaries~\ref{C5.6}(ii)(a)} and \ref{C7.13}(iii),
the properties in parts (ii) and (iii) of Definition~\ref{D4} reduce, respectively, to the ones in \cite[Definitions~7.11 and 7.8]{Iof17}.
The property in part (i) is new.
\end{remark}

The set-valued mapping \eqref{P10-1}, crucial for establishing equivalences between trans\-versality properties of collections of sets and the corresponding regularity properties of set-valued mappings, in the setting considered here translates into the mapping $G:X\times Y\rightrightarrows(X\times Y)\times(X\times Y)$ of the following form:
\begin{align}\label{82}
G(x,y):=\big(\gph F-(x,y)\big)\times\big(X\times(S-y)\big),
\quad (x,y)\in X\times Y.
\end{align}
Observe that
$G\iv(x_1,y_1,x_2,y_2)=\big(\gph F-(x_1,y_1)\big) \cap\big(X\times(S-y_2)\big)$ for all $x_1,x_2\in X$, $y_1,y_2\in Y$ and,
if $(\bx,\by)\in\gph F$, $\by\in S$, then $\big((0,0),(0,0)\big)\in G(\bx,\by)$.

The relationships between the nonlinear transversality and regularity properties in the next statement are direct consequences of Theorem~\ref{theorem13}.

\begin{theorem}\label{T9}
Let $G$ be defined by \eqref{82}.
\begin{enumerate}
\item
$F$ is $\varphi-$semitransversal to $S$ at $(\bx,\by)$ with some $\de>0$ if and only if $G$ is $\varphi-$se\-miregular at $\big((\bx,\by),(0,0),(0,0)\big)$ with {$\de$}.
\item
$F$ is $\varphi-$subtransversal to $S$ at $(\bx,\by)$ with some $\de_1>0$ and $\de_2>0$ if and only if $G$ is $\varphi-$sub\-regular at $\big((\bx,\by),(0,0),(0,0)\big)$ with {$\de_1$ and $\de_2$}.
\item
If $F$ is $\varphi-$transversal to $S$ at $(\bx,\by)$ with some $\de_1>0$ and $\de_2>0$, then $G$ is $\varphi-$re\-gular at $\big((\bx,\by),(0,0),(0,0)\big)$ with
any $\de_1'\in ]0,\de_1]$ and $\de_2'>0$ satisfying $\de_2'+\varphi\iv(\de'_1)\le\de_2$.
\smallskip
\sloppy

Conversely, if $G$ is $\varphi-$regular at $\big((\bx,\by), (0,0),(0,0)\big)$ with some $\de_1>0$ and $\de_2>0$, then $F$ is $\varphi-$transversal to $S$ at $(\bx,\by)$ with any $\de_1'\in]0,\de_1]$ and $\de_2'>0$ satisfying $\de_2'+\varphi\iv(\de'_1)\le\de_2$.
\end{enumerate}
\end{theorem}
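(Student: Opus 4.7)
The plan is to reduce Theorem~\ref{T9} entirely to Theorem~\ref{theorem13} by choosing the two sets in the product space $X\times Y$ as $\Omega_1:=\gph F$ and $\Omega_2:=X\times S$. First I would check that the reference point $(\bx,\by)$ lies in $\Omega_1\cap\Omega_2$: this is immediate since $(\bx,\by)\in\gph F$ and $\by\in S$. Next I would observe that for any shift vector $(x,y)\in X\times Y$ one has $\Omega_2-(x,y)=(X-x)\times(S-y)=X\times(S-y)$, so that the auxiliary mapping \eqref{P10-1} associated with $\Omega_1,\Omega_2$ coincides exactly with the mapping $G$ defined in \eqref{82}, and consequently $G(\bx,\by)\ni\big((0,0),(0,0)\big)$.

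Having identified $G$ with the mapping \eqref{P10-1} for the pair $\{\Omega_1,\Omega_2\}$, I would invoke Definition~\ref{D4}, which says by design that $F$ being $\varphi$-(semi/sub/-)transversal to $S$ at $(\bx,\by)$ is nothing but the pair $\{\gph F,X\times S\}=\{\Omega_1,\Omega_2\}$ being $\varphi$-(semi/sub/-)trans\-ver\-sal at $(\bx,\by)$ in $X\times Y$. The three assertions of Theorem~\ref{T9} are then, respectively, the three assertions of Theorem~\ref{theorem13} applied to this pair, with the base point $\bx$ there replaced by $(\bx,\by)$ here and the zero $(0,\ldots,0)$ replaced by $\big((0,0),(0,0)\big)$. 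In particular, assertions (i) and (ii) follow from Theorem~\ref{theorem13}(i) and (ii) as equivalences preserving the parameters $\de$, $\de_1$, $\de_2$; assertion (iii) follows from Theorem~\ref{theorem13}(iii), keeping the same constraint $\varphi\iv(\de_1')+\de_2'\le\de_2$ in both directions.

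There is essentially no obstacle: the only point that warrants care is verifying the identification of the two set-valued mappings, i.e.~that the second factor in $G(x,y)$, written as $X\times(S-y)$ rather than $(X\times S)-(x,y)$, genuinely matches \eqref{P10-1} for $\Omega_2=X\times S$. Once this trivial computation is recorded, the rest is just a translation of the statements of Definition~\ref{D4} and Theorem~\ref{theorem13}, with no estimates or quantitative work needed beyond what Theorem~\ref{theorem13} already provides.
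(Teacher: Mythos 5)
Your proposal is correct and is exactly the argument the paper intends: Theorem~\ref{T9} is stated as a direct consequence of Theorem~\ref{theorem13} applied to the pair $\{\gph F, X\times S\}$ in $X\times Y$, using Definition~\ref{D4} and the identification of $G$ in \eqref{82} with the mapping \eqref{P10-1} (valid since $X-x=X$). The only point warranting care is the one you single out, and you handle it correctly.
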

\begin{remark}
It is easy to see that the \SVM\ \eqref{82} can be replaced in our considerations by the truncated mapping $\mathcal{G}:X\times Y\rightrightarrows X\times Y\times Y$ defined by
\begin{align*}
\mathcal{G}(x,y):=\big(\gph F-(x,y)\big)\times(S-y),
\quad (x,y)\in X\times Y.
\end{align*}
The last mapping admits a simple representation $\mathcal{G}(x,y)=\gph\mathcal{F}-(x,y,y)$, where
the set-valued mapping $\mathcal{F}:X\rightrightarrows Y\times Y$ is defined by
\begin{align*}
\mathcal{F}(x):=F(x)\times S,\quad x\in X.
\end{align*}
It was shown in \cite[Theorems 7.12 and 7.9]{Iof17} that in the linear case the subtransversality and transversality of $F$ to $S$ at $(\bx,\by)$ are equivalent to the metric subregularity and regularity, respectively, of the mapping $(x,y)\mapsto\mathcal{F}(x)-(y,y)$ at $((\bx,\by),0)$.
\end{remark}

\section*{Acknowledgement}

The authors wish to thank the referee and the handling editor for their careful reading of the manuscript and valuable comments and suggestions.

\end{document}